\def\R{\mathbb{R}}
\newcommand{\Rmnum}[1]{\expandafter\@slowromancap\romannumeral #1@}
\newcommand{\D}{\displaystyle}
\newtheorem{thm}{Theorem}[section]
\newtheorem{lemma}[thm]{Lemma}
\newtheorem{remark}{Remark}[section]
\newtheorem{theorem}[thm]{Theorem}
\newtheorem{proposition}[thm]{Proposition}
\begin{document}
\author{Hai-Yang Jin}
\address{Department of Mathematics, South China University of Technology, Guangzhou 510640, China}
\email{mahyjin@scut.edu.cn}

\author{Zhian Wang}
\address{Department of Applied Mathematics, Hong Kong Polytechnic University, Hung Hom, Hong Kong}
\email{mawza@polyu.edu.hk}

\title[Global Stability and spatio-temporal Patterns of Predator-Prey systems]{Global dynamics and spatio-temporal patterns of predator-prey systems with density-dependent motion}

\maketitle

\begin{quote}
\small{
{\bf Abstract}: In this paper,  we investigate the global boundedness, asymptotic stability and pattern formation of predator-prey systems with density-dependent preytaxis in a two-dimensional bounded domain with Neumann boundary conditions, where the coefficients of motility (diffusion) and mobility (preytaxis) of the predator are correlated through a prey density dependent motility function.  We establish the existence of classical solutions with uniform-in time bound and the global stability of the spatially homogeneous prey-only steady states and coexistence steady states under certain conditions on parameters by constructing Lyapunov functionals. With numerical simulations, we further demonstrate that spatially homogeneous time-periodic patterns, stationary spatially inhomogeneous patterns and chaotic spatio-temporal patterns are all possible for the parameters outside the stability regime. We also find from numerical simulations that the temporal dynamics between linearized system and nonlinear systems {\color{black}are} quite different, and the prey density-dependent motility function can trigger the pattern formation.\\

\noindent  {\bf Keywords:} Predator-prey system, prey-taxis, boundedness, global stability, Lyapunov functional.\\

\noindent {\bf 2010 Mathematics Subject Classification:} 35A01, 35B40, 35B44, 35K57, 35Q92, 92C17.
}
\end{quote}


\numberwithin{equation}{section}
\section{Introduction}
The foraging is the searching for wild food resources by hunting, fishing, consuming or the gathering of plant matter. It plays an important role in an organism's ability to survive and reproduce.
The nonrandom foraging strategies in the predator-prey dynamics, such as the area-restricted search, is often observed to result in  populations of predators moving (or flowing) toward regions of higher prey density (see \cite{Curio, Chesson, Murdoch, Okubo}). Such movement is referred to as preytaxis which has important roles in biological control or ecological balance such as regulating prey (pest) population to avoid incipient outbreaks of prey or forming large-scale aggregation for survival (cf. \cite{Sapoukhina, Murdoch, Grunbaum}).
To understand the dynamics of predator-prey systems with prey-taxis,  Karevia and Odell \cite{KO-1987} put individual foraging behaviors into a biased random walk model which, upon passage to a continuum limit, leads to the following preytaxis system  (see equations (55)-(56) in \cite{KO-1987}):
\begin{eqnarray}\label{pt1}
\left\{\begin{array}{lll}
\partial_t u = \nabla \cdot (d(v) \nabla u)- \nabla \cdot( u\chi(v) \nabla v)+\mathcal{F}(u,v),\\
\partial_t v = D\Delta v+\mathcal{G}(u,v),
\end{array}\right.
\end{eqnarray}
where $u=u(x,t)$ and $v=v(x,t)$ denote the {\color{black} population} density of predators and preys at position $x$ and time $t$, respectively,  and  $D>0$ is a constant denoting the diffusivity of preys.
The term $\nabla \cdot (d(v) \nabla u)$ describes the diffusion (motility) of predators with coefficient $d(v)$, and $-\nabla \cdot( u\chi(v) \nabla v)$ accounts for the prey-taxis (mobility) with coefficient $\chi(v)$, where both motility and mobility coefficients are related to individual foraging behaviors.  The source terms $\mathcal{F}(u,v)$ and $\mathcal{G}(u,v)$ represent the predator-prey interactions to be discussed below. By fitting the abstract model (\ref{pt1})  to field experiment data of area-restricted search behavior exhibited by individual ladybugs (predators) and aphids (prey) with appropriate predator-prey interactions  (see \cite{KO-1987}), Karevia and Odell showed that the area-restricted non-random foraging  yield heterogeneous  aggregative patterns observed in the field experiment.

In a special case $\chi(v)=-d'(v)$, the system \eqref{pt1} becomes
\begin{eqnarray*}\label{pt1n}
\left\{\begin{array}{lll}
\partial_t u = \Delta(d(v)u)+\mathcal{F}(u,v),\\
\partial_t v = D\Delta v+\mathcal{G}(u,v),
\end{array}\right.
\end{eqnarray*}
where the diffusion term $\Delta(d(v)u)$ with $d'(v)<0$ has been interpreted as ``density-suppressed motility''  in \cite{Fu, Liu}(see more in \cite{JKW-SIAP-2018, Smith}), and $d(v)$ is called the motility function. This means that the predator will reduce its motility when encountering the prey, which is a rather reasonable assumption and has very sound applications in the predator-prey systems.

As mentioned in \cite{KO-1987},  the model (\ref{pt1}) was tailored to study the non-random foraging behavior (or prey-taxis) not only for ladybugs and aphids, but also for general organisms living in the predator-prey system.
Formally the model (\ref{pt1}) can be regarded as a variant of the Keller-Segel chemotaxis model \cite{KS1}, where $u(x,t)$ denotes the cell density and $v(x,t)$ the chemical concentration. However the prey-taxis model (\ref{pt1}) has two striking features that the Keller-Segel  models have not considered yet. First the model (\ref{pt1}) characterizes the non-random population dispersal and aggregation (i.e. both diffusion  and prey-taxis coefficients depend on the prey density). Second, the source terms in (\ref{pt1}) have the inter-specific interactions. These two features distinguish the prey-taxis model from the Keller-Segel type chemotaxis models.

Ecological/biological interactions can be defined as either intra-specific or inter-specific. The former occurs between individuals of the same species, while the later between two or more species.  There are three types of basic interspecific interactions (see \cite{cosner, Jungel, LHL-2009}): predator-prey, competition and mutualism, which can be encapsulated in $\mathcal{F}(u,v)$ and $\mathcal{G}(u,v)$ by the following typical form:
\begin{equation}\label{inter2}
\mathcal{F}(u,v)=c_1 u F(v)+h(u), \ \ \mathcal{G}(u,v)=f(v)-c_2 u F(v)
\end{equation}
where $h(u)$ and $f(v)$ are functions representing the intra-specific interactions of predators and preys, respectively. Parameters $c_1, c_2\in\R$ denote the coefficients of inter-specific interactions between predators and preys, where $F(v)$ is commonly called the functional response function fulfilling
$F(0)=0, \ F'(v)>0$. This paper is interested in the predator-prey interaction where $c_1>0, c_2>0$.
The function forms given in (\ref{inter2}) have represented most of ecologically meaningful examples of predator-prey population interactions used in the literature by assigning appropriate expressions to $h(u)$, $f(v)$ and $F(v)$. Typically the predator kinetics $h(u)$ may include density dependent death $h(u)=-u(\theta+\alpha u), \theta>0, \alpha \geq0$ , the prey kinetics $f(v)$ could be linear, logistic or Allee effect (bistable) type, and $F(v)$ may be of Lotka-Volterra type \cite{Lotka,Volterra} or Holling's type \cite{Holling2}. We refer the readers to the excellent surveys \cite{Murdoch-Briggs, Turchin} for an exhaustive list of $h(u), f(v)$ and $F(v)$.
Hence in this paper we consider the following system
\begin{equation}\label{1-1}
\begin{cases}
u_t=\nabla\cdot (d(v)\nabla u)- \nabla\cdot( u\chi(v)\nabla v)+\gamma uF(v)-\theta u-\alpha u^2, &x\in \Omega, ~~t>0,\\
 v_t=D\Delta v- u F(v)+ f(v),& x\in \Omega, ~~t>0,\\
 \frac{\partial u}{\partial\nu}=\frac{\partial v}{\partial\nu}=0,& x\in \partial\Omega, ~~t>0,\\
  u(x,0)=u_0(x),v(x,0)=v_0(x),&x\in\Omega,\\
 \end{cases}
\end{equation}
where $D,\gamma>0,\theta>0, \alpha\geq 0$ are  constants, and $d(v),\chi(v), F(v)$ and $f(v)$ satisfy the following conditions:
\begin{itemize}
\item[(H1)] $d(v),\chi (v) \in C^2([0,\infty)), d(v)>0, {\color{black}\chi(v)\geq 0}$ and ${\color{black}d'(v)\leq 0}$ on $[0,\infty)$.
\item[(H2)] $F(v)\in {\color{black}C^1([0,\infty))}, ~F(0)=0,~F(v)>0 ~\mathrm{in}~(0,\infty)~\mathrm{and}~F'(v)>0$.
\item[(H3)] $f: [0,\infty)\to\R$ is in $C^1$ with $f(0)=0$, and there exist two constants $\mu, K>0$ such that  $f(v)\leq \mu v$ for any $v\geq0$, $f(K)=0$ and $f(v)<0$ for all $v>K$.
\end{itemize}
We remark that the above assumptions for $F(v)$ and $f(v)$ have covered a large class of interesting and meaningful examples encountered in the literature as mentioned above. Our first result on the global boundedness of solutions of \eqref{1-1} is the following.


\begin{theorem}[Global boundedness]\label{GB}
Assume $(u_0,v_0)\in [W^{1,p}(\Omega)]^2$ with $p>2$ and $u_0, v_0 \geq 0(\not \equiv 0)$. Let $\Omega \subset \R^2$ be a bounded domain with smooth boundary and the hypotheses (H1)-(H3) hold. {\color{black} If $\alpha >0$ or $\chi(v)=-d'(v)$},
then there is a unique classical solution $(u,v) \in [C([0,\infty)\times\bar{\Omega}) \cap C^{2,1}((0,\infty)\times\bar{\Omega})]^2$ solving the problem (\ref{1-1}). Moreover there is constant $C>0$ independent of $t$ such that
\begin{equation*}\label{uf}
\|u(\cdot,t)\|_{L^\infty}+\|v(\cdot,t)\|_{W^{1, \infty}}\leq C,
\end{equation*}
where in particular $0\leq v\leq K_0$ with
\begin{equation}\label{K0}
K_0:=\max\{\|v_0\|_{L^\infty},K\}.
\end{equation}
\end{theorem}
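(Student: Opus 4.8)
The plan is to first produce a local classical solution and then propagate enough a priori estimates to rule out blow-up, so that the solution is global and uniformly bounded. Since $d(v)>0$, the system \eqref{1-1} is normally parabolic, so Amann's theory for quasilinear parabolic systems gives a unique classical solution on a maximal interval $[0,T_{\max})$, together with the extensibility criterion that $\|u(\cdot,t)\|_{L^\infty}+\|v(\cdot,t)\|_{W^{1,\infty}}\to\infty$ as $t\to T_{\max}$ whenever $T_{\max}<\infty$. Nonnegativity of $u$ and $v$ follows from the parabolic maximum principle: because $F(0)=f(0)=0$, the zero state is a subsolution of each equation, so $u,v\geq 0$. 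These are standard and I would treat them briefly.

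Next I would establish the two cornerstone bounds that hold \emph{unconditionally}. For the upper bound on $v$, note that $F\geq 0$ forces $-uF(v)\leq 0$, while $f(K_0)\leq 0$ since $K_0\geq K$ and $f<0$ on $(K,\infty)$; hence the constant $K_0$ is a supersolution of the $v$-equation, and comparison yields $0\leq v\leq K_0$. Consequently $d(v),\chi(v),F(v),f(v)$ are all bounded on the invariant range $[0,K_0]$. For the mass bound, I would test the $u$-equation with $1$ and the $v$-equation with $\gamma$ and add them: the divergence terms vanish by the Neumann conditions and the terms $\gamma uF(v)$ cancel, leaving $\frac{d}{dt}\int_\Omega(u+\gamma v)\leq-\theta\int_\Omega u-\alpha\int_\Omega u^2+\gamma\int_\Omega f(v)$. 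Using $f(v)\leq\mu v$ and $v\leq K_0$ this reduces to $\frac{d}{dt}\int_\Omega(u+\gamma v)\leq-\theta\int_\Omega(u+\gamma v)+C$, so Gr\"onwall delivers a uniform-in-time bound on $\|u\|_{L^1}$.

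The crux is to promote the $L^1$ bound to a uniform $L^\infty$ bound on $u$, and this is where the hypothesis ``$\alpha>0$ or $\chi=-d'$'' is needed; I expect this to be the main obstacle. Testing the $u$-equation with $u^{p-1}$ and using $d\geq d_0>0$ yields a dissipation $-c_p\int_\Omega u^{p-2}|\nabla u|^2$, while the preytaxis term, after Young's inequality, leaves the critical quantity $\int_\Omega u^p|\nabla v|^2$ to be absorbed. I would control $\nabla v$ via the boundedness of $v$ and the $L^p$--$L^q$ smoothing estimates for the Neumann heat semigroup applied to $v_t=D\Delta v-uF(v)+f(v)$, converting an $L^p$ bound on $u$ into an $L^q$ bound on $\nabla v$ (in the planar case $u\in L^p$ with $p>1$ already gives $\nabla v\in L^q$ for a suitable range of $q$). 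The two hypotheses then close the estimate in complementary ways: when $\alpha>0$, the logistic absorption term $-\alpha\int_\Omega u^{p+1}$ dominates $\int_\Omega u^p|\nabla v|^2$ after the splitting $u^p|\nabla v|^2\leq\varepsilon u^{p+1}+C_\varepsilon|\nabla v|^{2(p+1)}$; when $\chi=-d'$, the cross-diffusion collapses into the pure divergence form $u_t=\Delta(d(v)u)+\gamma uF(v)-\theta u-\alpha u^2$, and with $v$ bounded (so the operator is uniformly parabolic) an entropy-type or $L^p$ estimate closes even without logistic damping. The delicate point is the coupled bootstrap between $\|u\|_{L^p}$ and $\|\nabla v\|_{L^q}$ at the borderline two-dimensional exponents, which must be iterated carefully to reach arbitrarily large $p$.

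Finally, once $u$ is bounded in every $L^p$ uniformly in time, standard parabolic $L^p$ and Schauder estimates on the $v$-equation give a uniform $W^{1,\infty}$ bound on $v$, and a Moser–Alikakos iteration (or a further semigroup smoothing step) upgrades the $L^p$ bounds on $u$ to a uniform $\|u\|_{L^\infty}$ bound. These uniform bounds are incompatible with the blow-up alternative in the extensibility criterion, forcing $T_{\max}=\infty$; the asserted global classical solution together with the uniform estimate and $0\leq v\leq K_0$ then follows.
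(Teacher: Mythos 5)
Your skeleton (Amann local existence, $0\le v\le K_0$ by comparison, the $L^1$ bound by adding the equations, then an $L^p$ bootstrap and Moser iteration) matches the paper's, and your final steps coincide with its Lemma 3.2. But the crux — promoting the $L^1$ bound on $u$ to any higher uniform integrability in two dimensions — contains a genuine gap in \emph{both} cases, and it is precisely here that the paper does something you have not reproduced. In the case $\alpha>0$, your splitting $u^p|\nabla v|^2\le\varepsilon u^{p+1}+C_\varepsilon|\nabla v|^{2(p+1)}$ requires a uniform bound on $\|\nabla v\|_{L^{2(p+1)}}$ with $2(p+1)\ge 4$, whereas from $\|u\|_{L^1}\le C$ the parabolic smoothing estimate in the plane only yields $\nabla v\in L^q$ for $q<2$. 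Quantitatively, knowing $u\in L^s$ ($1\le s<2$) gives $\nabla v\in L^q$ for $q<\frac{2s}{2-s}$, so your scheme lets you conclude $u\in L^p$ only for $p<\frac{2s-2}{2-s}$; this map increases integrability only when $s>\sqrt{2}$, and from $s=1$ it produces nothing, so the bootstrap cannot be initiated. The paper's resolution is different in kind: it first derives the \emph{space-time} bound $\int_t^{t+\tau}\int_\Omega u^2\,dx\,ds\le C$ (for $\alpha>0$ this falls out of the logistic damping in the mass identity), deduces $\int_t^{t+\tau}\int_\Omega|\Delta v|^2\,dx\,ds\le C$ by testing the $v$-equation with $-\Delta v$, and then closes the differential inequality $\frac{d}{dt}\|u\|_{L^2}^2\le C\|u\|_{L^2}^2\left(\|\Delta v\|_{L^2}^2+1\right)$ by picking, in each interval of length $\tau$, a good time $t_0$ at which $\|u(\cdot,t_0)\|_{L^2}$ is bounded and integrating only over $(t_0,t)$. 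This ``space-time estimate plus good starting time'' device (Lemmas 2.3, 2.4 and 3.1 of the paper) is the missing idea; note that the hypothesis ``$\alpha>0$ or $\chi=-d'$'' is used \emph{only} to obtain the space-time $L^2$ bound, not in the pointwise-in-time $L^2$ estimate itself.

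The case $\chi(v)=-d'(v)$ is where your proposal fails most clearly. Writing the equation as $u_t=\Delta(d(v)u)+\gamma uF(v)-\theta u-\alpha u^2$ does not make the difficulty disappear: expanding $\Delta(d(v)u)$ and testing with $u^{p-1}$ reproduces exactly the same cross term $(p-1)\int_\Omega d'(v)u^{p-1}\nabla u\cdot\nabla v\,dx$, and now there is no damping available since $\alpha=0$ is permitted; the entropy functional $\int_\Omega u\ln u\,dx$ fares no better — the paper explicitly remarks in the introduction that this route (used in its predecessor for constant $d$) is inapplicable when $d$ is non-constant. So the assertion that ``an entropy-type or $L^p$ estimate closes even without logistic damping'' is unsupported and, as far as the direct computation goes, false. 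What the paper actually does is a duality argument: with $\mathcal{A}=-\Delta+\delta$ under Neumann conditions and $0<\delta<\theta/d(0)$, it rewrites the system as $(u+\gamma v)_t+\mathcal{A}(d(v)u+D\gamma v)=(\delta d(v)-\theta-\alpha u)u+\gamma f(v)+D\delta\gamma v$, multiplies by $\mathcal{A}^{-1}(u+\gamma v)\ge 0$, and uses $d(v)\ge d(K_0)>0$ to extract $\int_t^{t+\tau}\int_\Omega u^2\,dx\,ds\le C$; from there the argument merges with the $\alpha>0$ case. A minor further point: Amann's blow-up criterion for this upper-triangular system involves only $\|u\|_{L^\infty}+\|v\|_{L^\infty}$, not the $W^{1,\infty}$ norm of $v$ as you stated; this is harmless for your outline but the weaker criterion is what makes the conclusion reachable directly from the $L^\infty$ bounds.
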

Next, we will study the large time behavior of solutions. One can easily compute that  the system (\ref{1-1}) has three  homogeneous steady states $(u_s, v_s)$:
 \begin{equation*}\label{ss1}
 (u_s, v_s)=
\begin{cases}
(0,0) \ \mathrm{or} \ (0,K), & \mathrm{if} \ \gamma F(K) \leq \theta{\color{black},}\\
(0,0) \ \mathrm{or} \ (0,K) \ \mathrm{or} \ (u_*,v_*), & \mathrm{if} \ \gamma F(K) > \theta\\
\end{cases}
\end{equation*}
with $u_*, v_*>0$ determined by the following algebraic equations:
\begin{equation}\label{ss2}
u_*=\frac{f(v_*)}{F(v_*)}, \ \gamma F(v_*)=\theta +\alpha u_*,
\end{equation}
where $(0,0)$ is the extinction steady state, $(0,K)$ is the prey-only steady state and $(u_*, v_*)$ is the coexistence steady state. As  in \cite{JW-JDE-2017}, for the global stability, along with the hypotheses (H1)-(H3), we need another condition for the following compound {\color{black}function}:
\begin{equation*}\label{phi}
\phi(v)=\frac{f(v)}{F(v)}
\end{equation*}
as follows:
\medskip

(H4) The function $\phi(v)$ is continuously differentiable on $(0, \infty)$, $\phi(0)=\lim\limits_{v \to 0} \phi(v)>0$ and $\phi'(v)<0$ for any $v\geq 0$.
\medskip

\bigbreak

Then the global stability results are given as follows:

\begin{theorem}[Global stability]\label{WP}
Let the hypotheses (H1)-(H4) and assumptions in Theorem $\ref{GB}$ hold. Then the solution $(u,v)$ obtained in Theorem $\ref{GB}$ has the following properties:
 \begin{enumerate}
\item If  the parameters $\theta,\gamma,K$ satisfy
$
\gamma F(K)\leq \theta
$
where ``$=$'' holds iff $\alpha>0$,
 then
 $$\|u\|_{L^\infty}+\|v-K\|_{L^\infty} \to 0 \ \text{as}\  t\to \infty$$
 exponentially if $\gamma F(K)< \theta$ or algebraically if $\gamma F(K)= \theta, \alpha>0$.

\item If the  parameters $\theta,\gamma, K$  satisfy
$\gamma F(K)>\theta$ and
\begin{equation*}\label{sc}
D \geq\max\limits_{0\leq v\leq K_0}{\color{black} \frac{ u_* |F(v)|^2|\chi(v)|^2}{4 \gamma F(v_*)F'(v)d(v)}},
\end{equation*}
then
$$ \|u-u_*\|_{L^\infty}+\|v-v_*\|_{L^\infty} \to 0 \ \text{as}\  t\to \infty,$$
where the convergence is exponential if $\alpha>0$.
 \end{enumerate}
\end{theorem}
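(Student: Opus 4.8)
The plan is to treat both parts with a single Lyapunov functional tailored to the target steady state $(u_*,v_*)$: for part (2) this is the coexistence state from \eqref{ss2}, while for part (1) I would set $u_*=0$ and $v_*=K$, which is consistent since $u_*=\phi(v_*)$ and $\phi(K)=f(K)/F(K)=0$. Concretely I would take
\[
\mathcal{E}(t)=\int_\Omega\Big(u-u_*-u_*\ln\frac{u}{u_*}\Big)\,dx+\gamma\int_\Omega\int_{v_*}^{v}\frac{F(s)-F(v_*)}{F(s)}\,ds\,dx,
\]
in which each integrand is nonnegative and vanishes only at the steady state (when $u_*=0$ the first integrand is simply $u$), so $\mathcal{E}\ge0$. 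The weight $g(v)=1-F(v_*)/F(v)$ multiplying the $v$-entropy is chosen precisely so that, after integrating the diffusion and prey-taxis terms by parts and using the Neumann conditions, the indefinite cross terms $\pm\gamma(u-u_*)\big(F(v)-F(v_*)\big)$ produced by the two equations cancel.

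Carrying out the differentiation and invoking the steady-state identities in \eqref{ss2}, namely $u_*=f(v_*)/F(v_*)$ and $\gamma F(v_*)=\theta+\alpha u_*$, I expect the derivative to collapse to
\[
\frac{d}{dt}\mathcal{E}(t)=-\int_\Omega \mathcal{Q}(\nabla u,\nabla v)\,dx-\alpha\int_\Omega(u-u_*)^2\,dx+\gamma\int_\Omega\big(F(v)-F(v_*)\big)\big(\phi(v)-\phi(v_*)\big)\,dx,
\]
where $\mathcal{Q}$ is the quadratic form with diagonal entries $u_*d(v)/u^2$ and $\gamma D\,F(v_*)F'(v)/F(v)^2$ and off-diagonal $-u_*\chi(v)/(2u)$ coming from the taxis term (in part (1) there is in addition the manifestly nonpositive term $(\gamma F(K)-\theta)\int_\Omega u$). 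Three sign facts then finish the monotonicity. First, the reaction remainder is nonpositive because $F$ is increasing by (H2) and $\phi$ is decreasing by (H4), so its two factors have opposite signs and the product vanishes only at $v=v_*$; this is exactly where (H4) enters. Second, $\mathcal{Q}$ is positive semidefinite iff its determinant is nonnegative, i.e.
\[
\frac{u_*d(v)}{u^2}\cdot\gamma D\,\frac{F(v_*)F'(v)}{F(v)^2}\ge\frac{u_*^2\chi(v)^2}{4u^2},
\]
and since the factor $u^{-2}$ cancels this is precisely the $u$-independent threshold $D\ge u_*F(v)^2\chi(v)^2/\big(4\gamma F(v_*)F'(v)d(v)\big)$ assumed in part (2); in part (1) the coefficient $u_*=0$ kills the cross term, so no smallness of the taxis is needed. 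Hence $\mathcal{E}$ is nonincreasing.

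With $\mathcal{E}$ nonincreasing and bounded below, integrating $\mathcal{E}'(t)=-\mathcal{D}(t)\le0$ gives $\int_0^\infty\mathcal{D}(t)\,dt\le\mathcal{E}(0)<\infty$, where $\mathcal{D}\ge0$ is the sum of the three contributions above. The uniform-in-time bounds of Theorem \ref{GB}, together with parabolic Hölder (Schauder) estimates, make $t\mapsto\mathcal{D}(t)$ uniformly continuous, so a Barbalat-type argument forces $\mathcal{D}(t)\to0$. When $\alpha>0$ this yields $\|u-u_*\|_{L^2}\to0$ at once, and the reaction remainder combined with the strict monotonicity of $F,\phi$ gives $\|v-v_*\|_{L^2}\to0$; an $L^2$-to-$L^\infty$ upgrade via Gagliardo–Nirenberg interpolation against the uniform $W^{1,\infty}$ bound then produces the stated $L^\infty$ convergence. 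The decay rates follow by converting the dissipation inequality into a differential inequality for $\mathcal{E}$: in part (1) with $\gamma F(K)<\theta$ the term $(\gamma F(K)-\theta)\int_\Omega u$ dominates and yields $\mathcal{E}'\le-c\,\mathcal{E}$ (exponential), while the borderline case $\gamma F(K)=\theta$, $\alpha>0$ only gives $\mathcal{E}'\le-c\,\mathcal{E}^2$ through $\int_\Omega u^2\ge|\Omega|^{-1}(\int_\Omega u)^2$, hence the algebraic rate; in part (2) with $\alpha>0$ one obtains $\mathcal{E}\le C\mathcal{D}$ for large $t$ and thus exponential decay.

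The step I expect to be hardest is the coexistence case with $\alpha=0$: there the reaction contributes no $u$-dissipation, so $\mathcal{D}\to0$ controls only $v$, and recovering $u\to u_*$ must be done indirectly by reading off $uF(v)=f(v)-v_t+D\Delta v\to f(v_*)=u_*F(v_*)$ once $v\to v_*$ and the time and space derivatives of $v$ are shown to vanish, which requires the full strength of the uniform regularity. A related technical point is the entropy term $u-u_*-u_*\ln(u/u_*)$: comparing it with $(u-u_*)^2$ for the exponential rate needs a uniform positive lower bound on $u$, which is not immediate from Theorem \ref{GB} and would have to be secured (for instance from the equation once $v$ is close to $v_*$) before the differential inequality can be closed.
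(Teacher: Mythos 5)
Your Lyapunov-functional strategy is, in all structural respects, the paper's own proof: the same functionals (your $\mathcal{E}$ is $\gamma$ times the paper's $V_1$ and $V_2$, with $V_1$ being the degenerate case $u_*=0$), the same cancellation of cross terms, the same Sylvester/determinant condition producing the threshold on $D$, the same sign argument from (H2)/(H4) for the reaction remainder, and the same dissipation-integration step (your Barbalat argument versus the paper's LaSalle invariance principle is a cosmetic difference) followed by rate bootstrapping. The two caveats you flag at the end (recovering $u\to u_*$ when $\alpha=0$, and the need for a positive lower bound on $u$ before comparing the entropy with $(u-u_*)^2$) are genuine, and the paper resolves them essentially as you suggest, by establishing qualitative convergence first and deferring the detailed reaction-term analysis to the earlier constant-coefficient work.

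However, there is one genuine gap. Your $L^2$-to-$L^\infty$ upgrade for the predator component invokes ``Gagliardo--Nirenberg interpolation against the uniform $W^{1,\infty}$ bound,'' but Theorem \ref{GB} provides $W^{1,\infty}$ control only for $v$; for $u$ it gives only $\|u\|_{L^\infty}\le C$. Some uniform-in-time gradient (or H\"older) information on $u$ is indispensable here, and obtaining it is precisely the step the paper singles out in its introduction as a new difficulty for nonconstant $d(v),\chi(v)$: Lemma \ref{L4u} proves $\|\nabla u(\cdot,t)\|_{L^4}\le C$ for $t>1$ by a fourth-order energy estimate on $\int_\Omega|\nabla u|^4$ (using the identity $\nabla\Delta u\cdot\nabla u=\tfrac12\Delta|\nabla u|^2-|D^2u|^2$, a boundary trace inequality, and the Schauder bound on $v$ from Lemma \ref{L9}), and it is this bound that enters the interpolation \eqref{ar1}. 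Your proposal does invoke parabolic H\"older estimates for the uniform continuity of the dissipation, and a uniform $C^\sigma$ bound on $u$ (which the paper's Lemma \ref{L9} supplies via the Porzio--Vespri regularity theorem) could in principle replace the $L^4$ gradient bound in the interpolation; but as written your argument neither proves nor cites such an estimate, so the passage from $\|u-u_*\|_{L^2}\to 0$ (with rate) to $\|u-u_*\|_{L^\infty}\to 0$ (with rate) is unsupported.
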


{\color{black} We remark if $d(v)$ and $\chi(v)$ are constant, {the system (1.3) has been studied from various aspects (cf. \cite{ABN-2008, Chakraborty, Lee-Hillen-Lewis1, LHL-2009, Tao-2010, WSW-2015}), and in particular} the global existence and stability of solutions have been established in a previous work by the authors in \cite{JW-JDE-2017}. The results Theorem \ref{GB} and \ref{WP} extend the results of \cite{JW-JDE-2017} to nonconstant $d(v)$ and $\chi(v)$. Moreover the proof of Theorem \ref{WP} uses a similar idea (method of Lyaunoval functional) as in \cite{JW-JDE-2017}.  Except these analogies, we would like to stress some essential differences between the present paper and \cite{JW-JDE-2017} below.
\begin{itemize}
\item The method used in \cite{JW-JDE-2017} to prove the global existence of solutions was based on {\it a priori} estimate for the energy functional $\int_\Omega u\ln udx$ to attain the $L^2$-estimate of solutions. However such {\it a priori} estimates is attainable only for case where
      the motility function $d(v)$ is constant. Hence the method of \cite{JW-JDE-2017} is inapplicable to the model \eqref{1-1}. In this paper, we estimate $L^2$-norm of solutions directly to obtain the global existence of solutions (see section 3). With this new direct $L^2$-estimate method, the concavity of $F(v)$ required in \cite{JW-JDE-2017} is no longer needed. That is we not only use a different method to prove the global boundedness of solutions but also remove the condition $F''(v)\leq 0$ imposed in \cite{JW-JDE-2017}.
\item If $d(v)$ and $\chi(v)$ are non-constant as considered in this paper,  we find that the system (\ref{1-1}) can generate pattern formation as shown in section 5. This is different from \cite{JW-JDE-2017} where no pattern formation can be founded for constant $d(v)$ and $\chi(v)$. Our result indicates that the density-dependent motility is a trigger for pattern formation, which is a new finding.
\item In the proof of global stability shown in section 4, we need the estimate of $\|\nabla u\|_{L^4}$ which can be easily obtained for constant $d(v)$ and $\chi(v)$ but is not clear if $d(v)$ and $\chi(v)$ are not constant. Hence we present a new proof in Lemma 4.2.
\end{itemize}
}

In section 5, we shall detail the results of Theorem \ref{GB} and Theorem \ref{WP} for specific and often used forms of $F(v)$. In Theorem \ref{WP}  the conditions on parameter values are identified to ensure the global stability of homogeneous steady states. However it is unknown whether non-homogeneous (i.e., non-constant) steady states exists outside the stability regimes found in Theorem \ref{WP}. In the final section 5, we shall use linear stability analysis to find the conditions on parameters for the instability of equilibria of (\ref{1-1}) and then perform numerical simulations to illustrate that indeed spatially inhomogeneous patterns and time-periodic patterns can be found under certain conditions. We also demonstrate that the nonconstant motility function $d(v)$ plays an important role in generating the pattern formation.

\section{Local existence and Preliminaries}
In what follows, we shall use $c_i$ or $C_i (i=1,2,3, \cdots$) to denote a generic constant which may vary in the context.  We first state the existence of local-in-time classical solutions of system \eqref{1-1} by using the abstract theory of quasilinear parabolic systems in \cite{A-Book-1993}.
\begin{lemma}[Local existence]\label{LS}
Let $\Omega $ be a bounded domain in $\R^2$ with smooth boundary and the hypotheses (H1)-(H3) hold. Assume $(u_0,v_0)\in [W^{1,p}(\Omega)]^2$ with
$u_0, v_0 \geq 0(\not \equiv 0)$ and $p>2$.  Then there exists {\color{black}$T_{max}\in(0,\infty]$} such that the  problem (\ref{1-1}) has a unique classical solution  $(u,v) \in [C([0,T_{max})\times\bar{\Omega}) \cap C^{2,1}((0,T_{max})\times\bar{\Omega})]^2$ satisfying $u, v\geq 0$ for all $t>0$. Moreover, we have
{\color{black}\begin{equation}\label{bc}
either \  \ T_{max}=\infty, \ or\ \  \limsup\limits_{t\nearrow T_{max}}(\|u(\cdot,t)\|_{L^\infty}+\|v(\cdot,t)\|_{L^\infty})=\infty.
\end{equation}}
\end{lemma}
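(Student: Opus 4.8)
The plan is to recast \eqref{1-1} as a quasilinear parabolic system for the vector $w=(u,v)^{T}$ and to invoke the abstract theory of Amann \cite{A-Book-1993}. Writing both fluxes in divergence form, the system becomes $w_t=\nabla\cdot(\mathcal{A}(w)\nabla w)+\Phi(w)$ under the homogeneous Neumann condition $\partial_\nu w=0$, where
\begin{equation*}
\mathcal{A}(w)=\begin{pmatrix} d(v) & -u\chi(v)\\ 0 & D\end{pmatrix},\qquad
\Phi(w)=\begin{pmatrix}\gamma uF(v)-\theta u-\alpha u^2\\ -uF(v)+f(v)\end{pmatrix}.
\end{equation*}
Because $\mathcal{A}(w)$ is upper triangular, its eigenvalues are exactly the diagonal entries $d(v)$ and $D$; by (H1) we have $d(v)>0$ and $D>0$, so the spectrum of the principal symbol lies in the open right half-plane for every admissible $w$. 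This is precisely the normal ellipticity (parabolicity) needed to enter Amann's framework, and it is the one structural point to verify before the machinery applies.

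First I would record the regularity of the coefficients and data. Hypotheses (H1)--(H3) give $\mathcal{A}\in C^2$ and $\Phi\in C^1$ in $w$, so the smoothness assumptions of \cite{A-Book-1993} are met. Since $p>2$, the Sobolev embedding $W^{1,p}(\Omega)\hookrightarrow C(\bar\Omega)$ places the initial datum in the continuous class required to start the flow. Amann's local existence and uniqueness theorem then produces a unique maximal classical solution $(u,v)\in[C([0,T_{max})\times\bar\Omega)\cap C^{2,1}((0,T_{max})\times\bar\Omega)]^2$ on a maximal interval $[0,T_{max})$, together with the intrinsic continuation principle: if $T_{max}<\infty$, the solution cannot remain bounded in the underlying solution-space norm (here the $W^{1,p}$ norm) up to $T_{max}$.

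Next I would establish nonnegativity by the parabolic maximum principle, treating each equation as a scalar linear problem with the other component frozen. For $v$, since $F(0)=f(0)=0$ and $F,f\in C^1$, I may write $-uF(v)+f(v)=v\,g(x,t)$ with $g$ bounded on compact time subintervals, so $v$ solves $v_t=D\Delta v+g\,v$ with $v_0\ge0$ and Neumann data, whence $v\ge0$. For $u$, expanding $\nabla\cdot(u\chi(v)\nabla v)=\chi(v)\nabla v\cdot\nabla u+u\,\nabla\cdot(\chi(v)\nabla v)$ shows that $u$ satisfies a linear parabolic equation $u_t=\nabla\cdot(d(v)\nabla u)-\chi(v)\nabla v\cdot\nabla u+c(x,t)\,u$ with bounded zeroth-order coefficient $c$; since $u_0\ge0$, the maximum principle forces $u\ge0$. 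The sign conditions $\chi(v)\ge0$ and $d'(v)\le0$ in (H1) ensure the coefficients are well defined and the problem stays parabolic throughout.

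Finally, to upgrade Amann's intrinsic alternative to the stated $L^\infty$ criterion \eqref{bc}, I would argue by contradiction: suppose $T_{max}<\infty$ while $\|u(\cdot,t)\|_{L^\infty}+\|v(\cdot,t)\|_{L^\infty}\le M$ on $[0,T_{max})$. Boundedness of $v$ controls $d(v),\chi(v)$ and their derivatives, so the scalar $v$-equation has a bounded right-hand side, and parabolic $L^p$/Schauder estimates bound $v$ in $C^{1+\beta}$ up to $T_{max}$; feeding this into the linear $u$-equation and applying $L^p$-parabolic regularity bounds $u$ in $W^{1,p}$. This contradicts the continuation principle, yielding $\limsup_{t\nearrow T_{max}}(\|u\|_{L^\infty}+\|v\|_{L^\infty})=\infty$. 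I expect this last bootstrap to be the main technical point, since it is where the density-dependent coefficients $d(v)$ and $\chi(v)$ must be genuinely controlled through the $L^\infty$ bound on $v$; by contrast, the triangular-structure observation and the maximum-principle arguments are routine.
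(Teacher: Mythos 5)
Your proposal is correct, and for the core steps it coincides with the paper's own proof: both recast \eqref{1-1} as a quasilinear system $w_t=\nabla\cdot(\mathcal{A}(w)\nabla w)+\Phi(w)$ with Neumann data, observe that the upper triangular $\mathcal{A}$ with positive diagonal entries makes the system normally parabolic so that Amann's theory (the paper cites \cite[Theorem 7.3]{A-DIE-1990}) yields a unique maximal classical solution, and both obtain nonnegativity by the maximum principle applied to each equation with the other component frozen (the paper actually uses the \emph{strong} maximum principle to conclude strict positivity from $u_0,v_0\not\equiv 0$). Where you genuinely diverge is the blow-up criterion \eqref{bc}: the paper gets it in one line by citing Amann's result for triangular systems, \cite[Theorem 5.2]{A-MZ-1989}, whose very content is that for upper triangular $\mathcal{A}$ an $L^\infty$ bound on the solution precludes finite-time breakdown; you instead propose a hand-made bootstrap (linear parabolic regularity bounds $v$ in $C^{1+\beta}$, this feeds into the $u$-equation to bound $u$ in $W^{1,p}$, contradicting the intrinsic $W^{1,p}$ continuation principle). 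Your route is a legitimate and essentially standard alternative---it is in effect the mechanism behind Amann's theorem---but it is where all the real work hides: you must verify that $\nabla v$ is H\"older in space \emph{and} time (so that the divergence-form coefficients $d(v)$ and $u\chi(v)\nabla v$ are regular enough for the Schauder/$L^p$ machinery to apply to the $u$-equation), and you must invoke the precise form of Amann's continuation criterion (boundedness, indeed relative compactness, of the orbit in the phase space $W^{1,p}$ together with normal ellipticity up to the closure of the orbit---the latter automatic here since $d>0$ on $[0,\infty)$ and nonnegativity is preserved). In short, the paper's citation buys brevity, while your bootstrap buys self-containedness at the cost of these details, which your sketch correctly identifies as the main technical point but does not fully carry out.
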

\begin{proof} We shall apply the theory developed by Amann \cite{A-Book-1993} to prove this lemma.
With $\omega=(u,v)$, the system \eqref{1-1} can be reformulated as
\begin{equation*}\label{RS}
\begin{cases}
\omega_t=\nabla \cdot (\mathbb{A}(\omega)\nabla \omega )+\Phi (\omega), &x\in \Omega,t>0,\\
\frac{\partial \omega }{\partial \nu}=0, &x\in\partial \Omega, t>0,\\
\omega(\cdot,0)=(u_0,v_0), & x\in \Omega,
\end{cases}
\end{equation*}
where
\begin{equation*}
\mathbb{A}(\omega)=\begin{bmatrix} d(v) & -\chi(v) u \\[1mm]0 &1 \end{bmatrix} \ \mathrm{and}\ \
\Phi(\omega)=\begin{bmatrix} \gamma uF(v)-\theta u-\alpha u^2 \\ f(v)-u F(v) \end{bmatrix}.
\end{equation*}
Since the given initial conditions satisfy  $0\leq (u_0,v_0)\in [W^{1,p}(\Omega)]^2$ with $p>2$ and hence
the matrix $\mathbb{A}(\omega)$ is positively definite at $t=0$.  Hence the system \eqref{1-1} is normally parabolic and the local existence of solutions follows from  \cite[Theorem 7.3]{A-DIE-1990}. That is there exists
a $T_{max}>0$ such that the system \eqref{1-1} admits a unique solution  $(u,v) \in [C([0,T_{max})\times\bar{\Omega}) \cap C^{2,1}((0,T_{max})\times\bar{\Omega})]^2$.

Next, we will use the maximum principle to prove that $u,v\geq 0$. To this end, we rewrite the first equation of \eqref{1-1} as
\begin{equation}\label{RS-1}
u_t=d(v)\Delta u+[d'(v) \nabla v-\chi(v)\nabla v]\cdot \nabla u-\chi'(v)u|\nabla v|^2-\chi(v)u\Delta v+\gamma u F(v)-\theta u-\alpha u^2.
\end{equation}
Then applying the strong maximum principle to \eqref{RS-1} with the Neumann boundary condition asserts that
$u>0$  for all $(x,t)\in \Omega \times (0,T_{max})$ due to $u_0\not \equiv 0$. Similarly, we can show $v>0$ for any  $(x,t)\in \Omega \times (0,T_{max})$ by applying the strong maximum principle to the second equation of system \eqref{1-1}. Since  $\mathbb{A}(\omega)$ is an upper triangular matrix,  the assertion \eqref{bc} follows from \cite[Theorem 5.2]{A-MZ-1989} directly.  Then the proof of Lemma \ref{LS} is completed.

\end{proof}
\begin{lemma}[\cite{JW-JDE-2017}]
Under the conditions in Lemma \ref{LS}, the solution $(u,v)$ of $\eqref{1-1}$ satisfies
 \begin{equation}\label{Lv}
 {0\leq v(x,t)\leq K_0},~~\mathrm{for~all}\ \ x\in\Omega \ \mathrm{and}\ t\in (0,T_{max}),
 \end{equation}
 where $K_0$ is defined by \eqref{K0}.
\end{lemma}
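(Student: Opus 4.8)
The plan is to treat the two bounds separately. The lower bound $v \geq 0$ is already furnished by Lemma~\ref{LS}, where the strong maximum principle applied to the second equation of \eqref{1-1} gives $v > 0$ throughout $\Omega \times (0,T_{max})$; thus only the upper bound $v \leq K_0$ requires argument, and it is a scalar comparison statement that involves the $u$-equation only through the sign of a single term.

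The key observation is that the constant $K_0$ is a supersolution of a scalar semilinear problem dominating the $v$-equation. Since $u \geq 0$ and, by (H2), $F(v) \geq 0$ on $[0,\infty)$, the consumption term obeys $-uF(v) \leq 0$, so
$$v_t - D\Delta v - f(v) = -u F(v) \leq 0 \quad \text{in } \Omega \times (0,T_{max}),$$
i.e. $v$ is a subsolution of $V_t = D\Delta V + f(V)$. On the other hand $K_0 = \max\{\|v_0\|_{L^\infty},K\} \geq K$, and (H3) forces $f \leq 0$ on $[K,\infty)$, so $f(K_0) \leq 0$; hence the constant $w \equiv K_0$ satisfies $w_t - D\Delta w - f(w) = -f(K_0) \geq 0$ and is a supersolution. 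Both functions satisfy the homogeneous Neumann condition, and at $t=0$ we have $v_0 \leq \|v_0\|_{L^\infty} \leq K_0$. It then remains only to convert these inequalities into the pointwise bound $v \leq K_0$ via the parabolic comparison principle.

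For that last step, the only point needing care is that $f$ is nonlinear; this is handled by the local Lipschitz continuity of $f$ (guaranteed by $f \in C^1$) together with the local-in-time boundedness of $v$ on each slab $\Omega \times [0,T]$ with $T < T_{max}$, which holds since $v \in C([0,T_{max})\times\bar{\Omega})$. Concretely, setting $\varphi := v - K_0$ and applying the mean value theorem to write $f(v) - f(K_0) = c(x,t)\,\varphi$ with $c := f'(\xi) \in L^\infty(\Omega\times[0,T])$, one obtains the linear differential inequality $\varphi_t - D\Delta\varphi - c\,\varphi \leq 0$ with $\varphi(\cdot,0) \leq 0$ and Neumann data. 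Since $c$ need not have a definite sign, one passes to $\psi := e^{-\lambda t}\varphi$ with $\lambda > \|c\|_{L^\infty}$, which renders the zeroth-order coefficient negative and brings the inequality within the scope of the classical weak maximum principle, yielding $\psi \leq 0$ and hence $v \leq K_0$ on $\Omega \times [0,T]$. As $T < T_{max}$ is arbitrary, the bound extends to all of $(0,T_{max})$. I expect no serious obstacle here: the whole argument hinges on the single sign fact $f(K_0) \leq 0$ supplied by (H3) and on the nonnegativity of $uF(v)$, with the nonlinearity absorbed by the routine exponential-weight substitution.
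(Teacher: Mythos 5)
Your proof is correct and takes essentially the same approach as the source: the paper itself gives no proof of this lemma (it is quoted from \cite{JW-JDE-2017}), and your argument --- using $u\ge 0$ and $F(v)\ge 0$ to make $v$ a subsolution of $V_t=D\Delta V+f(V)$, the sign fact $f(K_0)\le 0$ from (H3) to make the constant $K_0$ a supersolution, and the parabolic comparison/maximum principle (with the routine exponential-weight reduction handling the non-sign-definite linearized coefficient) --- is precisely the standard comparison argument behind the cited result. Nothing needs correcting.
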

Next, we  present a basic boundedness property of the solutions to \eqref{1-1}.
\begin{lemma}\label{L1u} Let $(u,v)$ be a solution of system \eqref{1-1}. Then there exists  a constant $C>0$ such that
\begin{equation}\label{L1u-1}
\int_\Omega udx\leq C,\ \ \mathrm{for\ \ all }\ \ t\in(0,T_{max}).
\end{equation}
Moreover, if $\alpha>0$ or $\chi(v)=-d'(v)$, one has
\begin{equation}\label{2L1-2}
\int_t^{t+\tau} \int_\Omega u^2dxds\leq C, \ \ \mathrm{for\ \ all }\ \ t\in(0,\widetilde{T}_{max}),
\end{equation}
 where
\begin{equation}\label{DtT}
\tau:=\min \Big\{ 1,\frac{1}{2} T_{max}\Big\} \ \ \mathrm{and}\ \
\widetilde{T}_{max}:=\begin{cases}
T_{max}-\tau, \ \
&\mathrm{if}\ \ T_{max}<\infty,\\
\infty, &\mathrm{if}\ \ T_{max}=\infty.\\
\end{cases}
\end{equation}
\end{lemma}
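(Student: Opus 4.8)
The plan is to derive both assertions from a single mass-type identity obtained by integrating the two equations of \eqref{1-1}. Testing the $u$-equation with $1$ and the $v$-equation with $\gamma$, and using the Neumann conditions to annihilate the divergence terms, I would obtain
\[
\frac{d}{dt}\int_\Omega (u+\gamma v)\,dx + \theta\int_\Omega u\,dx + \alpha\int_\Omega u^2\,dx = \gamma\int_\Omega f(v)\,dx .
\]
Since $0\le v\le K_0$ by \eqref{Lv} and $f(v)\le\mu v$ by (H3), the right-hand side is bounded by a constant; adding the (bounded) term $\theta\gamma\int_\Omega v\,dx$ to both sides yields $\frac{d}{dt}\int_\Omega(u+\gamma v)+\theta\int_\Omega(u+\gamma v)\le C$, and Gronwall's inequality together with $u,v\ge0$ gives the uniform bound \eqref{L1u-1}. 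For \eqref{2L1-2} in the case $\alpha>0$ I would simply discard $\theta\int_\Omega u$, integrate the identity over $(t,t+\tau)$, and use \eqref{L1u-1} to control the boundary values, obtaining $\alpha\int_t^{t+\tau}\!\int_\Omega u^2\le C\tau+\int_\Omega(u+\gamma v)(t)\le C$.

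The genuinely harder case is $\chi(v)=-d'(v)$ with $\alpha=0$ (if $\alpha>0$ the previous argument already applies). Here the predator equation collapses to the density-suppressed form $u_t=\Delta(d(v)u)+g$ with $g:=\gamma uF(v)-\theta u$, and the reaction no longer provides a coercive $u^2$ term, so I would instead exploit the divergence structure via a duality argument. Let $\psi=\psi(\cdot,t)$ solve $-\Delta\psi=u-\bar u$ in $\Omega$ with $\partial_\nu\psi=0$ and $\int_\Omega\psi=0$, where $\bar u:=\frac{1}{|\Omega|}\int_\Omega u$. Testing the $u$-equation against $\psi$, and using $-\Delta\psi_t=u_t-\bar u_t$ together with the compatible boundary behaviour $\partial_\nu(d(v)u)=0$, the time-derivative term becomes $\int_\Omega u_t\psi=\frac12\frac{d}{dt}\|\nabla\psi\|_{L^2}^2$, while $\int_\Omega\Delta(d(v)u)\,\psi=\int_\Omega d(v)u\,\Delta\psi=\bar u\int_\Omega d(v)u-\int_\Omega d(v)u^2$. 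This produces the identity
\[
\frac12\frac{d}{dt}\|\nabla\psi\|_{L^2}^2+\int_\Omega d(v)u^2=\bar u\int_\Omega d(v)u+\int_\Omega g\psi .
\]

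To close the estimate I would invoke two-dimensional elliptic regularity and Sobolev embedding: since $\int_\Omega\psi=0$, the Neumann estimates give $\|\psi\|_{W^{2,2}}\le C\|u\|_{L^2}$, hence $\|\psi\|_{L^\infty}\le C\|u\|_{L^2}$ (as $W^{2,2}\hookrightarrow L^\infty$ in $\R^2$) and also $\|\nabla\psi\|_{L^2}\le C\|u\|_{L^2}$. Writing $d_0:=\min_{[0,K_0]}d(v)>0$, the term $\bar u\int_\Omega d(v)u$ is bounded by \eqref{L1u-1}, and since $\|g\|_{L^1}\le(\gamma F(K_0)+\theta)\|u\|_{L^1}\le C$ I would estimate $\int_\Omega g\psi\le\|g\|_{L^1}\|\psi\|_{L^\infty}\le C\|u\|_{L^2}\le\frac12\int_\Omega d(v)u^2+C$ by Young's inequality, absorbing half of the coercive term to reach $\frac{d}{dt}\|\nabla\psi\|_{L^2}^2+\int_\Omega d(v)u^2\le C$.

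I expect the main obstacle to be that this inequality only bounds $\int_t^{t+\tau}\!\int_\Omega d(v)u^2$ in terms of the endpoint value $\|\nabla\psi(t)\|_{L^2}^2$, which is not a priori uniformly bounded in $t$ (the naive route here would require an unavailable smallness condition, and in two dimensions $L^1$ does not embed into $H^{-1}$). I would circumvent this by feeding the one-sided bound $\|\nabla\psi\|_{L^2}^2\le C\|u\|_{L^2}^2\le\frac{C}{d_0}\int_\Omega d(v)u^2$ back into the differential inequality, turning it into $\frac{d}{dt}\|\nabla\psi\|_{L^2}^2+c\|\nabla\psi\|_{L^2}^2\le C$ for some $c>0$; Gronwall's inequality then renders $\|\nabla\psi(t)\|_{L^2}^2$ uniformly bounded. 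Integrating $\frac{d}{dt}\|\nabla\psi\|_{L^2}^2+d_0\int_\Omega u^2\le C$ over $(t,t+\tau)$ and using this uniform bound finally yields \eqref{2L1-2}, completing the proof.
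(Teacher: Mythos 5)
Your proof is correct, and while its first half (the mass identity yielding \eqref{L1u-1}, then integrating over $(t,t+\tau)$ when $\alpha>0$) coincides with the paper's argument, your treatment of the case $\chi(v)=-d'(v)$, $\alpha=0$ takes a genuinely different route inside the same duality framework. Both you and the paper exploit the divergence structure $\Delta(d(v)u)$ through an inverse elliptic operator to produce the coercive term $\int_\Omega d(v)u^2\,dx$, but the mechanisms differ. The paper works with the combined quantity $w=u+\gamma v$ precisely so that the predation terms $\pm uF(v)$ cancel, leaving a source $(\delta d(v)-\theta-\alpha u)u+\gamma f(v)+D\delta\gamma v$ that is bounded above \emph{pointwise} (this is where the choice $0<\delta<\theta/d(0)$ and $d'\le 0$ enter); it then multiplies by the nonnegative function $\mathcal{A}^{-1}w$ with $\mathcal{A}=-\Delta+\delta$, so that only the order-preserving property and the $L^2$-boundedness of the resolvent are used --- an argument that is dimension-independent. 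You instead test the $u$-equation alone with $\psi=(-\Delta)^{-1}(u-\bar u)$ (Neumann, zero mean) and never need the cancellation: the uncancelled reaction $g=\gamma uF(v)-\theta u$ is only controlled in $L^1$ via \eqref{L1u-1} and $F(v)\le F(K_0)$, and is paired against $\|\psi\|_{L^\infty}\le C\|u\|_{L^2}$, which costs you elliptic regularity plus the embedding $W^{2,2}(\Omega)\hookrightarrow L^\infty(\Omega)$; this confines your version to dimension $n\le 3$, which is harmless here since Theorem \ref{GB} is stated for $\Omega\subset\R^2$. Your endpoint device --- feeding $\|\nabla\psi\|_{L^2}^2\le C\|u\|_{L^2}^2\le \frac{C}{d(K_0)}\int_\Omega d(v)u^2dx$ back into the differential inequality to get $y'+cy\le C$, applying Gronwall, and then integrating a second time --- plays exactly the role of the paper's combination of \eqref{A5} with \eqref{A6}, which produces $y_1'+\frac{c_3}{2c_1}y_1+c_3\int_\Omega(u+\gamma v)^2dx\le C$ in a single pass. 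In summary, the paper's route is dimension-free and rests on resolvent positivity together with a pointwise upper bound on the source, which is exactly why it must combine the two equations; your route trades this for two-dimensional Sobolev embedding, but it avoids the auxiliary parameter $\delta$ and shows that the cancellation of $uF(v)$ is not actually necessary for this estimate.
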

\begin{proof}
Multiplying the second equation $\eqref{1-1}$ by $\gamma$ and adding the resulting equation into the first equation of $\eqref{1-1}$, then integrating the result over $\Omega\times(0,t)$, one has
\begin{equation*}
\begin{split}
\frac{d}{dt}\left(\int_\Omega udx+\gamma\int_\Omega vdx\right)+\int_\Omega u(\theta+\alpha u)dx
&=\gamma \int_\Omega f(v)dx, \\
\end{split}
\end{equation*}
which, along with the hypotheses (H2) and (H3) and the fact that $0\leq v\leq K_0$, gives
\begin{equation}\label{Eq-u1}
\begin{split}
\frac{d}{dt}\left(\int_\Omega udx+\gamma\int_\Omega vdx\right)+\theta\left(\int_\Omega udx+\gamma\int_\Omega vdx\right)+\alpha \int_\Omega u^2dx
&\leq(\gamma\mu+\theta\gamma)
 \int_\Omega vdx\\
 &\leq (\gamma\mu+\theta\gamma)K_0|\Omega|.
\end{split}
\end{equation}
Then the Gronwall's inequality applied to $\eqref{Eq-u1}$ yields  \eqref{L1u-1}.
If $\alpha>0$, then integrating  \eqref{Eq-u1} over $(t,t+\tau)$, we have \eqref{2L1-2} directly.

Next, we consider the case $\chi(v)=-d'(v)$.
In this case,  the first equation of system \eqref{1-1} can be written as
\begin{equation}\label{eq}
{\color{black}u_t}=\Delta(d(v)u)+\gamma uF(v)-\theta u-\alpha u^2.
\end{equation}
 Let $0<\delta<\frac{\theta}{d(0)}$ and let $\mathcal{A}$ denote the self-adjoint realization of
 $-\Delta+\delta$ under homogeneous Neumann boundary conditions in $L^2(\Omega)$. Since $\delta>0$, $\mathcal{A}$ possesses an order-preserving bounded inverse $\mathcal{A}^{-1}$ on $L^2(\Omega)$ and hence we can find a constant $c_1>0$ such that
 \begin{equation}\label{A1}
 \|\mathcal{A}^{-1}\psi\|_{L^2}\leq c_1\|\psi\|_{L^2} \ \ \ \mathrm{ for \  all}\ \psi\in L^2(\Omega)
 \end{equation}
 and
 \begin{equation}\label{A2}
  \|\mathcal{A}^{-\frac{1}{2}}\psi\|_{L^2}^2=\int_\Omega \psi\cdot  \mathcal{A}^{-1}\psi dx\leq c_1\|\psi\|_{L^2}^2   \ \ \ \mathrm{ for \  all}\ \psi\in L^2(\Omega).
 \end{equation}
 From the second equation of (\ref{1-1}) with \eqref{eq}, we have
 \begin{equation*}
 (u+\gamma v)_t=\Delta(d(v)u+D\gamma v)-\theta u-\alpha u^2+\gamma f(v),
 \end{equation*}
 which can be rewritten as
 \begin{equation}\label{A3}
 \begin{split}
  (u+\gamma v)_t+\mathcal{A}(d(v)u+D\gamma v)
  &=\delta [d(v)u+D\gamma v]-\theta u-\alpha u^2+\gamma f(v)\\
  &=(\delta d(v)-\theta-\alpha u) u+\gamma f(v)+D\delta\gamma v.
  \end{split}
 \end{equation}
 Noting the facts $0<\delta<\frac{\theta}{d(0)}$ and \eqref{Lv}, one can derive that
 \begin{equation}\label{A4}
 (\delta d(v)-\theta-\alpha u) u+\gamma f(v)+D\delta\gamma v\leq (\delta d(0)-\theta)+c_2\leq c_2.
 \end{equation}
Hence, multiplying \eqref{A3} by $\mathcal{A}^{-1} (u+\gamma v)\geq 0$, and using the fact \eqref{A4}, one has
\begin{equation*}
\frac{1}{2}\frac{d}{dt}\int_\Omega |\mathcal{A}^{-\frac{1}{2}} (u+\gamma v)|^2dx
+\int_\Omega (d(v) u+D\gamma v) (u+\gamma v)dx\leq c_2\int_\Omega \mathcal{A}^{-1}(u+\gamma v)dx,
\end{equation*}
which together with the fact $0<d(K_0)\leq d(v)$,  gives a constant  $c_3:=\min\{d(K_0),D\}$ such that
\begin{equation}\label{A5}
\frac{1}{2}\frac{d}{dt}\int_\Omega |\mathcal{A}^{-\frac{1}{2}} (u+\gamma v)|^2dx
+c_3\int_\Omega  (u+\gamma v)^2dx\leq c_2\int_\Omega \mathcal{A}^{-1}(u+\gamma v)dx.
\end{equation}
Using \eqref{A1} and \eqref{A2}, we can derive that
\begin{equation}\label{A6}
\begin{split}
\frac{c_3}{4c_1}\int_\Omega |\mathcal{A}^{-\frac{1}{2}} (u+\gamma v)|^2
dx+c_2\int_\Omega  \mathcal{A}^{-1}(u+\gamma v)dx
&\leq \frac{c_3}{4}\int_\Omega  (u+\gamma v)^2dx
+c_2|\Omega |^\frac{1}{2}\|u+\gamma v\|_{L^2}\\
&\leq \frac{c_3}{2}\int_\Omega  (u+\gamma v)^2dx+\frac{c_2^2|\Omega|}{c_3}.
\end{split}
\end{equation}
Adding \eqref{A6} and \eqref{A5}, and letting $y_1(t):=\int_\Omega |\mathcal{A}^{-\frac{1}{2}} (u+\gamma v)|^2$, one has
\begin{equation*}
y_1'(t)+\frac{c_3}{2c_1}y_1(t)+c_3\int_\Omega  (u+\gamma v)^2dx\leq \frac{2c_2^2|\Omega|}{c_3}.
\end{equation*}
Then one has  $y_1(t)\leq c_4$ and
\begin{equation*}\label{A7}
\int_{t}^{t+\tau}\int_\Omega u^2dxds\leq \int_{t}^{t+\tau}\int_\Omega  (u+\gamma v)^2dxds\leq \frac{y_1(t)}{c_3}+ \frac{2c_2^2|\Omega| \tau}{c_3^2}\leq \frac{c_4}{c_3}+ \frac{2c_2^2|\Omega| }{c_3^2}\ \  \mathrm{for \ \  all}\ \ t\in(0,\widetilde{T}_{max}),
\end{equation*}
which gives \eqref{2L1-2}. Then the proof of this lemma is completed.
 \end{proof}
Moreover, we can thereupon deduce the following result as a consequence of Lemma \ref{L1u}.
\begin{lemma} \label{2L1}
Let $(u,v)$ be the solution of system \eqref{1-1}, then there exists a constant $C>0$ independent of $t$ such that
\begin{equation}\label{2L1-1}
\|\nabla v\|_{L^2}\leq C,\ \ \mathrm{for\ \ all }\ \ t\in(0,T_{max})
\end{equation}
and
\begin{equation}\label{2L1-3}
\int_t^{t+\tau}\int_\Omega |\Delta v|^2dxds\leq C, \ \ \mathrm{for\ \ all }\ \ t\in(0,\widetilde{T}_{max}),
\end{equation}
where $\tau$ and $\widetilde{T}_{max}$ are defined by \eqref{DtT}.
\end{lemma}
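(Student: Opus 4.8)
The plan is to derive a differential inequality for $y(t):=\int_\Omega|\nabla v|^2\,dx$ by testing the second equation of \eqref{1-1} against $-\Delta v$, and then to upgrade the resulting space--time control into a uniform-in-time bound by exploiting the space--time estimate \eqref{2L1-2}. First I would multiply $v_t=D\Delta v-uF(v)+f(v)$ by $-\Delta v$ and integrate over $\Omega$; using the Neumann boundary condition to integrate by parts on the left, this yields the energy identity
\begin{equation*}
\frac{1}{2}\frac{d}{dt}\int_\Omega|\nabla v|^2\,dx+D\int_\Omega|\Delta v|^2\,dx=\int_\Omega uF(v)\Delta v\,dx-\int_\Omega f(v)\Delta v\,dx.
\end{equation*}
Since $0\leq v\leq K_0$ by \eqref{Lv} and $F,f\in C^1$, both $F(v)$ and $f(v)$ are bounded on the range of $v$; applying Young's inequality to the two terms on the right and absorbing the resulting $\tfrac{D}{2}\int_\Omega|\Delta v|^2\,dx$ into the left-hand side, I obtain
\begin{equation*}
\frac{d}{dt}\int_\Omega|\nabla v|^2\,dx+D\int_\Omega|\Delta v|^2\,dx\leq C_1\int_\Omega u^2\,dx+C_2,
\end{equation*}
with $C_1,C_2>0$ depending only on $D,K_0,|\Omega|$ and the sup-norms of $F,f$ on $[0,K_0]$.

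The next step is to manufacture a dissipative term in $y$ itself. Because the Neumann Laplacian has a nontrivial kernel, $\|\nabla v\|_{L^2}$ is not controlled by $\|\Delta v\|_{L^2}$ alone, but the identity $\|\nabla v\|_{L^2}^2=-\int_\Omega v\,\Delta v\,dx\leq\|v\|_{L^2}\|\Delta v\|_{L^2}$ together with Young's inequality gives $\|\nabla v\|_{L^2}^2\leq\eta\|\Delta v\|_{L^2}^2+C_\eta\|v\|_{L^2}^2$, where $\|v\|_{L^2}^2\leq K_0^2|\Omega|$ is bounded by \eqref{Lv}. Using part of the dissipation $D\int_\Omega|\Delta v|^2\,dx$ to produce a multiple of $y(t)$ in this way, I arrive at a differential inequality of the form
\begin{equation*}
y'(t)+a\,y(t)+\frac{D}{2}\int_\Omega|\Delta v|^2\,dx\leq h(t),\qquad h(t):=C_1\int_\Omega u^2\,dx+C_3,
\end{equation*}
for some $a>0$.

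At this point the decisive ingredient is the bound \eqref{2L1-2} from Lemma \ref{L1u}, which holds precisely under the hypothesis $\alpha>0$ or $\chi(v)=-d'(v)$ and guarantees $\int_t^{t+\tau}h(s)\,ds\leq b$ uniformly in $t$. The main obstacle is that $\int_\Omega u^2\,dx$ is controlled only in this time-averaged sense rather than pointwise, so an ordinary Gr\"onwall argument does not produce a bound uniform in $t$; instead I would invoke a uniform Gr\"onwall (ODE comparison) argument. Concretely, writing $(e^{at}y)'\leq e^{at}h(t)$, integrating over $(0,t)$, and summing the contributions of the successive intervals $[k\tau,(k+1)\tau]$ on which $\int h\leq b$ shows that $y(t)\leq y(0)e^{-at}+C$ with $C$ independent of $t$, which is exactly \eqref{2L1-1}. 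Finally, integrating the differential inequality over $(t,t+\tau)$ and using both the uniform bound on $y$ just established and $\int_t^{t+\tau}h(s)\,ds\leq b$ controls $\tfrac{D}{2}\int_t^{t+\tau}\int_\Omega|\Delta v|^2\,dx\,ds$, which gives \eqref{2L1-3} and completes the proof.
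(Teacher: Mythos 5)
Your proof is correct and follows essentially the same route as the paper: testing the $v$-equation with $-\Delta v$, converting part of the dissipation $D\int_\Omega|\Delta v|^2dx$ into a damping term in $y(t)=\int_\Omega|\nabla v|^2dx$, and then closing via the space--time bound \eqref{2L1-2} together with a uniform-Gr\"onwall (interval-summation) argument, finally integrating over $(t,t+\tau)$ for \eqref{2L1-3}. The only cosmetic differences are that you derive the interpolation $\|\nabla v\|_{L^2}^2\leq\|v\|_{L^2}\|\Delta v\|_{L^2}$ by direct integration by parts where the paper invokes the Gagliardo--Nirenberg inequality, and you spell out the ODE comparison step that the paper leaves implicit in the phrase ``which gives \eqref{2L1-1} by noting \eqref{2L1-2}.''
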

\begin{proof}
Multiplying the second equation of system \eqref{1-1} by $-\Delta v$,  integrating the result by part and using the Cauchy-Schwarz inequality and {\color{black}the boundedness of $v$ in} \eqref{Lv}, we end up with
\begin{equation*}
\begin{split}
\frac{1}{2}\frac{d}{dt}\int_\Omega |\nabla v|^2dx+D\int_\Omega |\Delta v|^2dx
&=\int_\Omega u F(v)\Delta vdx-\int_\Omega f(v)\Delta v dx\\
&\leq \frac{D}{2}\int_\Omega |\Delta v|^2dx+\frac{1}{D}\int_\Omega  F^2(v)u^2dx+\frac{1}{D}\int_\Omega f^2(v)dx\\
&\leq \frac{D}{2}\int_\Omega|\Delta v|^2dx+ \frac{F^2(K_0)}{D}\int_\Omega u^2dx+c_1,
\end{split}
\end{equation*}
which yields
\begin{equation}\label{2L1-5}
\frac{d}{dt}\int_\Omega |\nabla v|^2dx+D\int_\Omega |\Delta v|^2dx\leq  \frac{2F^2(K_0)}{D}\int_\Omega u^2dx+2c_1.
\end{equation}
Using the Gagliardo-Nirenberg inequality and noting the fact $\|v\|_{L^2}\leq K_0|\Omega|^\frac{1}{2}$, one has
\begin{equation}\label{2L1-6}
\begin{split}
\int_\Omega |\nabla v|^2dx=\|\nabla v\|_{L^2}^2
\leq c_2(\|\Delta v\|_{L^2}\|v\|_{L^2}+\|v\|_{L^2}^2)
\leq \frac{D}{2} \|\Delta v\|_{L^2}^2+c_3.
\end{split}
\end{equation}
Substituting \eqref{2L1-6} into \eqref{2L1-5}, we get
\begin{equation}\label{2L1-7}
\frac{d}{dt}\int_\Omega |\nabla v|^2dx+\int_\Omega |\nabla v|^2dx+\frac{D}{2}\int_\Omega |\Delta v|^2dx\leq  \frac{2F^2(K_0)}{D}\int_\Omega u^2dx+c_4.
\end{equation}
{\color{black}{Denote} }$y(t):=\int_\Omega |\nabla v(\cdot,t)|^2dx$, $t\in [0,T_{max})$ and $z(t):=\frac{2F^2(K_0)}{D}\int_\Omega u^2dx+c_4$, from \eqref{2L1-7} we have
\begin{equation}\label{2L1-8}
y'(t)+y(t)\leq z(t) \ \mathrm{for \  all} \ \ t\in(0,T_{max}),
\end{equation}
{\color{black}which gives   \eqref{2L1-1} by noting \eqref{2L1-2}}. On the other hand, integrating  \eqref{2L1-7} over $(t,t+\tau)$ along with \eqref{2L1-2} and \eqref{2L1-1}, we obtain \eqref{2L1-3}.
\end{proof}



\section{Boundedness of solutions}
We will derive the {\it a priori} $L^2$-estimate of the solution component $u$.
\begin{lemma}\label{L2}
Let the conditions in Theorem $\ref{GB}$ hold. Then the solution of $\eqref{1-1}$ satisfies
\begin{equation}\label{2-1}
\|u(\cdot,t)\|_{L^2}\leq C, \ \text{for all} \ t \in (0,T_{\max})
\end{equation}
where $C>0$ is a constant independent of $t$.
\end{lemma}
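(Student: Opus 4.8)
The plan is to derive a differential inequality for $y(t):=\int_\Omega u^2\,dx$ by testing the $u$-equation against $u$, and then to close the estimate by a uniform Gr\"onwall argument based on the space–time bounds \eqref{2L1-2} and \eqref{2L1-3} already in hand. First I would multiply the first equation of \eqref{1-1} by $u$ and integrate by parts (using the Neumann condition), which gives
\begin{equation*}
\frac12\frac{d}{dt}\int_\Omega u^2\,dx+\int_\Omega d(v)|\nabla u|^2\,dx
=\int_\Omega u\chi(v)\nabla v\cdot\nabla u\,dx+\gamma\int_\Omega u^2F(v)\,dx-\theta\int_\Omega u^2\,dx-\alpha\int_\Omega u^3\,dx .
\end{equation*}
By \eqref{Lv} we have $0\le v\le K_0$, so (H1)–(H2) yield $d(v)\ge d(K_0)>0$ and $F(v)\le F(K_0)$; hence the reaction term is bounded by $\gamma F(K_0)y(t)$, while $-\alpha\int_\Omega u^3\,dx\le0$ is discarded. (This is the only place the hypothesis ``$\alpha>0$ or $\chi=-d'$'' enters: both cases are exactly what guarantees the input bound \eqref{2L1-2}, and in the case $\chi=-d'$ the integration by parts produces the \emph{same} cross term.) The taxis term is split by Young's inequality,
\begin{equation*}
\int_\Omega u\chi(v)\nabla v\cdot\nabla u\,dx\le\frac12\int_\Omega d(v)|\nabla u|^2\,dx+c_1\int_\Omega u^2|\nabla v|^2\,dx ,
\end{equation*}
with $c_1$ depending on $\max_{[0,K_0]}\chi^2/d(K_0)$, so that half of the dissipation $\int_\Omega d(v)|\nabla u|^2$ is kept in reserve.

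The crux, and the main obstacle, is to dominate the coupling $\int_\Omega u^2|\nabla v|^2\,dx$ by the retained dissipation plus a quantity that is integrable in time. Since $\Omega\subset\R^2$, I would use H\"older and the Gagliardo–Nirenberg inequality:
\begin{equation*}
\int_\Omega u^2|\nabla v|^2\,dx\le\|u\|_{L^4}^2\,\|\nabla v\|_{L^4}^2,\qquad
\|u\|_{L^4}^2\le c_2\bigl(\|\nabla u\|_{L^2}\|u\|_{L^2}+\|u\|_{L^2}^2\bigr),
\end{equation*}
together with elliptic regularity for the Neumann Laplacian and \eqref{2L1-1}, which give $\|\nabla v\|_{L^4}^2\le c_3(\|\Delta v\|_{L^2}+1)$. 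Substituting these and applying Young's inequality once more to absorb the factor $\|\nabla u\|_{L^2}^2$ into the remaining half of $\int_\Omega d(v)|\nabla u|^2$, the coupling collapses to a term of the form $c_4\,y(t)\bigl(\|\Delta v\|_{L^2}^2+1\bigr)$.

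Collecting everything yields, on $(0,T_{\max})$, the differential inequality
\begin{equation*}
y'(t)\le a(t)\,y(t),\qquad a(t):=c_5\bigl(\|\Delta v\|_{L^2}^2+1\bigr).
\end{equation*}
By \eqref{2L1-3} the coefficient satisfies $\int_t^{t+\tau}a(s)\,ds\le c_6$ uniformly in $t\in(0,\widetilde{T}_{max})$, and by \eqref{2L1-2} the solution obeys $\int_t^{t+\tau}y(s)\,ds\le c_7$. These are precisely the hypotheses of the uniform Gr\"onwall lemma, which delivers $y(t)\le C$ for all $t\in(0,T_{\max})$ with $C$ independent of $t$, i.e.\ \eqref{2-1}. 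I expect the delicate point to be that neither $a(t)$ nor $y(t)$ is controlled pointwise — only their integrals over windows of fixed length $\tau$ are bounded — so the pointwise $L^2$-bound must come from the Gr\"onwall structure rather than from a direct absorption, and every coupling term must be kept in a form compatible with the time-averaged estimates \eqref{2L1-2}–\eqref{2L1-3}.
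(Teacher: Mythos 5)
Your proposal is correct and follows essentially the same route as the paper: testing with $u$, bounding the taxis term via H\"older, Gagliardo--Nirenberg and the estimate $\|\nabla v\|_{L^4}^2\le c(\|\Delta v\|_{L^2}+1)$, absorbing the gradient term into the dissipation, and closing with a uniform-Gr\"onwall argument based on \eqref{2L1-2} and \eqref{2L1-3}. The only cosmetic difference is that you invoke the uniform Gr\"onwall lemma as a black box, whereas the paper proves it inline by choosing a good time $t_0\in((t-\tau)_+,t)$ from \eqref{2L1-2} and integrating the differential inequality over $(t_0,t)$.
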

\begin{proof} Multiplying the first equation of $\eqref{1-1}$ by $2u$,  integrating the result with respect to $x$ over $\Omega$,  one has
\begin{equation}\label{3-14}
\begin{split}
&\frac{d}{dt}\int_\Omega u^2dx+2\int_\Omega d(v) |\nabla u|^2dx+2\alpha \int_\Omega u^3dx +2\theta\int_\Omega u^2dx\\
&=2\int_\Omega u\chi (v)\nabla u\cdot\nabla vdx+2\gamma \int_\Omega u^2F(v)dx.\\
\end{split}
\end{equation}
With the  assumptions in (H1)-(H2) and the fact  \eqref{Lv}, one has
 $d(v)\geq d(K_0)>0$, $0<F(v)\leq F(K_0)$ and {\color{black}$0\leq \chi (v)\leq c_1$}. Then using the Young's inequality and H\"{o}lder inequality, we have from \eqref{3-14} that
 \begin{equation}\label{2-2}
\begin{split}
&\frac{d}{dt}\int_\Omega u^2dx +2d(K_0)\int_\Omega  |\nabla u|^2dx+2\alpha \int_\Omega u^3dx +2\theta\int_\Omega u^2dx\\
&\leq 2c_1\int_\Omega u|\nabla u||\nabla v|dx+2\gamma F(K_0) \int_\Omega u^2dx\\
&\leq  d(K_0)\int_\Omega |\nabla u|^2dx+\frac{c_1^2}{d(K_0)}\int_\Omega u^2|\nabla v|^2dx+2\gamma F(K_0) \int_\Omega u^2dx\\
&\leq  d(K_0) \|\nabla u\|_{L^2}^2+\frac{c_1^2}{d(K_0)}\|u\|_{L^4}^2\|\nabla v\|_{L^4}^2+2\gamma F(K_0) \|u\|_{L^2}^2.
\end{split}
\end{equation}
Using the Gagliardo-Nirenberg inequality, one  has
\begin{equation}\label{2-6}
\begin{split}
\|u\|_{L^4}^2
&\leq c_2\left(\|\nabla u\|_{L^2}\| u\|_{L^2}
+\| u\|_{L^2}^2\right)
\end{split}
\end{equation}
and the following estimate (cf. \cite[Lemma 2.5]{JKW-SIAP-2018})
\begin{equation}\label{2-7}
\begin{split}
\|\nabla v\|_{L^4}^2
\leq c_3\left(\|\Delta v\|_{L^2}\|\nabla v\|_{L^2}+\|\nabla v\|_{L^2}^2\right)
\leq c_3c_4(\|\Delta v\|_{L^2}+c_4)
\end{split}
\end{equation}
where the fact $\|\nabla v\|_{L^2}\leq c_4$ has been used.
Combining \eqref{2-6}-\eqref{2-7} and  using the Young's inequality, we obtain
\begin{equation}\label{2-8}
\begin{split}
\frac{c_1^2}{d(K_0)}\|u\|_{L^4}^2\|\nabla v\|_{L^4}^2
&\leq c_5\left(\|\nabla u\|_{L^2}\| u\|_{L^2}
+\| u\|_{L^2}^2\right)(\|\Delta v\|_{L^2}+c_4)\\
&\leq c_5\|\nabla u\|_{L^2}\|u\|_{L^2}\|\Delta v\|_{L^2}+c_4c_5\|\nabla u\|_{L^2}\|u\|_{L^2}\\
&\ \ \ \ +c_5\|u\|_{L^2}^2\|\Delta v\|_{L^2}+c_4c_5\|u\|_{L^2}^2\\
&\leq d(K_0) \|\nabla u\|_{L^2}^2+\frac{c_5^2}{d(K_0)}\|u\|_{L^2}^2\|\Delta v\|_{L^2}^2
+c_6\|u\|_{L^2}^2,
\end{split}
\end{equation}
where $c_5:=\frac{c_1^2c_2c_3c_4}{d(K_0)}$ and $c_6:=\frac{c_4^2c_5^2+d^2(K_0)+2c_4c_5d(K_0)}{2d(K_0)}$. Then with \eqref{2-8}, we update \eqref{2-2} as
\begin{equation}\label{2-9}
\begin{split}
\frac{d}{dt}\|u\|_{L^2}^2
\leq \frac{c_5^2}{d(K_0)}\|u\|_{L^2}^2\|\Delta v\|_{L^2}^2+(2\gamma F(K_0)+c_6)\|u\|_{L^2}^2
\leq c_7\|u\|_{L^2}^2(\|\Delta v\|_{L^2}^2+1)
\end{split}
\end{equation}
with $c_7:=\frac{c_5^2}{d(K_0)}+2\gamma F(K_0)+c_6$. For any $t\in (0,T_{max})$ and in the case of either $t\in(0,\tau)$ or $t\geq \tau$ with $\tau=\min \Big\{ 1,\frac{1}{2} T_{max}\Big\}$, from \eqref{2L1-2} we can find a $t_0=t_0(t)\in ((t-\tau)_+, t)$ such that $t_0\geq 0$ and
\begin{equation}\label{2-10}
\int_\Omega u^2(x,t_0)dx\leq c_{8}.
\end{equation}
On the other hand, from \eqref{2L1-3} in Lemma  \ref{2L1}, we can find a constant $c_{9}>0$ such that
\begin{equation}\label{2-11}
\int_{t_0}^{t_0+\tau}\int_\Omega |\Delta v(x,s)|^2dxds\leq c_{9} \ \ \mathrm{for \ all} \ \ t_0\in (0,\widetilde{T}_{max}).
\end{equation}
Then integrating \eqref{2-9} over $(t_0,t)$, and using \eqref{2-10}, \eqref{2-11} and the fact $t\leq t_0+\tau\leq t_0+1$, we  derive
\begin{equation*}\label{2L2-12}
\begin{split}
\|u(\cdot,t)\|_{L^2}^2
&\leq \|u(\cdot,t_0)\|_{L^2}^2\cdot e^{c_7\int_{t_0}^t(\|\Delta v(\cdot,s)\|_{L^2}^2+1)ds}\leq c_{8}  e^{c_7(c_9+1)},
\end{split}
\end{equation*}
which yields  \eqref{2-1} and completes the proof of  Lemma \ref{L2}.
\end{proof}

Next, we will derive the boundedness of $\|u(\cdot,t)\|_{L^\infty}$ by the Moser iteration.

\begin{lemma}\label{L7}Let the conditions in Theorem $\ref{GB}$ hold. Then the solution of system (\ref{1-1}) satisfies
\begin{equation}\label{L7-1}
\|u(\cdot,t)\|_{L^\infty}\leq C  \ \  \ \mathrm{for\  all}\  t\in(0,T_{max}),
\end{equation}
where the constant $C>0$  independent of $t$.
\end{lemma}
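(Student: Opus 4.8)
The plan is to bound $\|u(\cdot,t)\|_{L^\infty}$ by a Moser--Alikakos iteration on the $L^p$-norms of $u$, taking the $L^2$-bound of Lemma \ref{L2} as the base case. Before iterating, I would first upgrade the integrability of $\nabla v$ uniformly in time. Since $0\le v\le K_0$ and $0<F(v)\le F(K_0)$, the source term $g:=-uF(v)+f(v)$ in the $v$-equation satisfies $\|g(\cdot,t)\|_{L^2}\le F(K_0)\|u(\cdot,t)\|_{L^2}+\mu K_0|\Omega|^{1/2}\le C$ uniformly in $t$ by Lemma \ref{L2} and (H3). Representing $v$ through the variation-of-constants formula with the Neumann heat semigroup and invoking its $L^p$--$L^q$ smoothing estimates, one obtains, because $\Omega\subset\R^2$, a uniform-in-time bound $\|\nabla v(\cdot,t)\|_{L^q}\le C_q$ for every finite $q$; the exponential decay of the semigroup on the orthogonal complement of the constants keeps the constant independent of $t$ (and of $T_{max}$), while the contribution of the initial data is controlled via $v_0\in W^{1,p}$ with $p>2$. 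This step is where two-dimensionality is essential, and it upgrades the merely space--time $L^2$ control of $\Delta v$ from Lemma \ref{2L1}.

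With $\nabla v$ in hand, I would test the first equation of \eqref{1-1} with $pu^{p-1}$ and integrate by parts to obtain, for $w:=u^{p/2}$,
\[
\frac{d}{dt}\int_{\Omega}u^{p}\,dx+\frac{4(p-1)}{p}\int_{\Omega}d(v)|\nabla w|^{2}\,dx+p\theta\int_{\Omega}u^{p}\,dx\le (p-1)\int_{\Omega}\chi(v)\nabla(u^{p})\cdot\nabla v\,dx+p\gamma\int_{\Omega}u^{p}F(v)\,dx,
\]
where the nonnegative $\alpha$-term has been discarded. Using $d(v)\ge d(K_0)$, $0\le\chi(v)\le c_1$ and Young's inequality on the taxis integral $(p-1)\int_{\Omega}\chi(v)\nabla(u^p)\cdot\nabla v\,dx$ (writing $\nabla(u^p)=2w\nabla w$), I can absorb half of the diffusion term and be left with a multiple $\sim p^2$ of $\int_{\Omega}u^p|\nabla v|^2\,dx$ on the right. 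Retaining the damping term $p\theta\int_{\Omega}u^p$ is important so that all constants stay independent of $t$, matching the mechanism used in the earlier lemmas.

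The heart of the argument is then the preytaxis term $\int_{\Omega}u^{p}|\nabla v|^{2}\,dx=\int_{\Omega}w^{2}|\nabla v|^{2}\,dx$. I would estimate it by Hölder's inequality as $\|\nabla v\|_{L^{q}}^{2}\|w\|_{L^{2q/(q-2)}}^{2}$ with $q$ fixed large so that $\|\nabla v\|_{L^{q}}\le C_q$ by the first step, and then apply the two-dimensional Gagliardo--Nirenberg inequality $\|w\|_{L^{2q/(q-2)}}^{2}\le C(\|\nabla w\|_{L^{2}}^{2a}\|w\|_{L^{1}}^{2(1-a)}+\|w\|_{L^{1}}^{2})$ with $a=\tfrac{q+2}{2q}<1$, followed by Young's inequality to split off $\varepsilon\|\nabla w\|_{L^{2}}^{2}+C_{\varepsilon}\|w\|_{L^{1}}^{2}$. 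Choosing $\varepsilon$ (polynomially small in $p$) to absorb the gradient into the remaining diffusion term and noting $\|w\|_{L^{1}}=\|u\|_{L^{p/2}}^{p/2}$, I arrive at a differential inequality of the form
\[
\frac{d}{dt}\|u\|_{L^{p}}^{p}+c_{1}\|u\|_{L^{p}}^{p}\le c_{2}\,p^{\beta}\Big(\sup_{0<s<T_{max}}\|u(\cdot,s)\|_{L^{p/2}}^{p/2}\Big)^{2}
\]
with fixed constants $c_1,c_2,\beta>0$.

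Finally, an ODE comparison gives $\sup_t\|u\|_{L^{p}}^{p}\le\max\{\|u_0\|_{L^{p}}^{p},\,c_2c_1^{-1}p^{\beta}\sup_t\|u\|_{L^{p/2}}^{p}\}$, where $\|u_0\|_{L^p}$ is finite since $W^{1,p}(\Omega)\hookrightarrow L^{\infty}(\Omega)$ in two dimensions. Setting $p=p_k=2^k$ and $A_k:=\max\{1,\sup_t\|u\|_{L^{p_k}}\}$, the identity $2p_{k-1}/p_k=1$ turns this into the recursion $A_k\le\max\{C_0,(c_2c_1^{-1}p_k^{\beta})^{1/p_k}A_{k-1}\}$; since $\sum_k 2^{-k}\log(c_2c_1^{-1}2^{k\beta})<\infty$, the product of the prefactors converges, $A_k$ stays bounded uniformly in $k$, and letting $k\to\infty$ yields \eqref{L7-1}. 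I expect the main obstacle to be exactly the preytaxis term $\int_{\Omega}u^{p}|\nabla v|^{2}$: taming it is what forces the preliminary uniform higher-integrability of $\nabla v$, and one must track the polynomial-in-$p$ dependence of every constant carefully so that the iteration prefactors do not blow up — the feature that genuinely distinguishes the density-dependent case from the constant-coefficient setting of \cite{JW-JDE-2017}.
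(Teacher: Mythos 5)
Your proposal is correct and follows essentially the same route as the paper: both first obtain uniform-in-time $L^q$-bounds on $\nabla v$ for finite $q$ from the $L^2$-bound of Lemma \ref{L2} via parabolic regularity of the $v$-equation (the paper cites \cite{KS-JMAA,TW-2012} for precisely the semigroup estimate you sketch), then test the $u$-equation with $u^{p-1}$ and control the taxis integral $\int_\Omega u^p|\nabla v|^2dx$ by H\"older, Gagliardo--Nirenberg (with the $L^1$-norm of $u^{p/2}$ as the low-order norm) and Young's inequality, retaining the damping term $p\theta\int_\Omega u^p dx$ so that all bounds are time-independent. The only difference is organizational: after deriving the $p$-dependent estimate the paper simply takes $p=4$, concludes $\|\nabla v\|_{L^\infty}\le C$ (possible since $4>n=2$), and then invokes the standard Alikakos--Moser iteration \cite{MS} as a black box, whereas you run the recursion $L^{2^{k-1}}\to L^{2^k}$ explicitly with $\nabla v$ only in finite $L^q$, which is why you must (and correctly do) track the polynomial $p$-dependence of the constants.
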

\begin{proof}
First, we claim that if $\|u(\cdot,t)\|_{L^p}\leq M_0 (p\geq 1)$, then it holds that
\begin{equation}\label{L3-2}
\|\nabla v(\cdot,t)\|_{L^r}\leq c_1, \ \mathrm{for\ all} \ t\in(0,T_{max})
\end{equation}
with
{\color{black} \begin{equation}\label{L3-3}
r\in\begin{cases}
[1,\frac{np}{n-p}), \ \ &\mathrm{if}\ \ \  p< n,\\
[1,\infty), \ \ &\mathrm{if}\ \ \  p= n,\\
[1,\infty],\ \ &\mathrm{if}\ \ \  p>n.\\
\end{cases}
\end{equation}
}
In fact, from the second equation of system \eqref{1-1}, we know that $v$ solves the following problem
\begin{equation}\label{L3-4}
v_t=D\Delta v-v+g(u,v) \ \ \mathrm{in}\ \  \Omega,\ \ \frac{\partial v}{\partial\nu }=0,
\end{equation}
where $g(u,v):= v-uF(v)+f(v)$. Noting the properties of $F(v), f(v)$ and the fact that {\color{black}$0\leq v(x,t)\leq K_0$} in \eqref{Lv}, one has
\begin{equation}\label{L3-5}
\|g(u,v)\|_{L^p}\leq c_2(\|u\|_{L^p}+1)\leq c_2(M_0+1):= c_3.
\end{equation}
Then applying the results of \cite[Lemma 1]{KS-JMAA} (see also \cite[Lemma 1.2]{TW-2012}) to the problem \eqref{L3-4} with  \eqref{L3-5}, we obtain  \eqref{L3-2}  with \eqref{L3-3}.

Using $u^{p-1}$ with $p\geq 2$ as a test function for  the first equation in  (\ref{1-1}), and integrating the resulting equation by parts, we obtain
\begin{equation}\label{L4-2}
\begin{split}
&\frac{1}{p}\frac{d}{dt}\int_\Omega u^pdx +(p-1)\int_\Omega d(v) u^{p-2}|\nabla u|^2dx+\alpha \int_\Omega u^{p+1}dx+\theta \int_\Omega u^pdx\\
&=(p-1)\int_\Omega \chi (v)u^{p-1}\nabla u\cdot\nabla vdx+\gamma \int_\Omega  F(v)u^pdx.\\
\end{split}
\end{equation}
Using the facts  $0\leq \chi(v)\leq c_4$, $d(v)\geq d (K_0)>0$, $0<F(v)\leq F(K_0)$,  $\alpha\geq 0$ and  applying the Young's inequality,  from \eqref{L4-2} we obtain
\begin{equation*}\label{L4-3}
\begin{split}
&\frac{1}{p}\frac{d}{dt}\int_\Omega u^pdx +(p-1)d(K_0)\int_\Omega  u^{p-2}|\nabla u|^2dx+\theta \int_\Omega u^pdx\\
&\leq c_4 (p-1) \int_\Omega u^{p-1}|\nabla u||\nabla v|dx+\gamma F(K_0) \int_\Omega u^pdx\\
&\leq \frac{(p-1)d(K_0)}{2}\int_\Omega  u^{p-2}|\nabla u|^2dx+\frac{c_4^2(p-1)}{2d(K_0)}\int_\Omega u^p|\nabla v|^2dx+\gamma F(K_0) \int_\Omega u^pdx,
\end{split}
\end{equation*}
which along with the fact  $$\frac{p(p-1)d(K_0)}{2}\int_\Omega  u^{p-2}|\nabla u|^2dx=\frac{2(p-1)d(K_0)}{p}\int_\Omega |\nabla u^\frac{p}{2}|^2dx$$ gives
\begin{equation}\label{L4-4}
\begin{split}
&\frac{d}{dt}\int_\Omega u^pdx +p\theta \int_\Omega u^{p}dx+\frac{2(p-1)d(K_0)}{p}\int_\Omega |\nabla u^\frac{p}{2}|^2dx\\
&\leq \frac{c_4^2p(p-1)}{2d(K_0)}\int_\Omega u^p|\nabla v|^2dx+\gamma F(K_0) \int_\Omega u^pdx\\
\end{split}
\end{equation}
 for all $t\in(0,T_{max})$ and for all $p\geq 2$.   From  Lemma \ref{L2}, one has  $\|u(\cdot,t)\|_{L^2}\leq c_5$ and hence $\|\nabla v(\cdot,t)\|_{L^4}\leq c_6$ by noting \eqref{L3-2}. Then using the H\"{o}lder inequality and the Gagliardo-Nirenberg inequality with the fact $\|u^\frac{p}{2}(\cdot,t)\|_{L^\frac{4}{p}}=\|u(\cdot,t)\|_{L^2}^\frac{p}{2}\leq c_5^\frac{p}{2}$,  one has for all $  t\in(0,T_{max})$
\begin{equation}\label{L7-4}
\begin{split}
\frac{c_4^2p(p-1)}{2d(K_0)}\int_\Omega u^p|\nabla v|^2dx&\leq \frac{c_4^2p(p-1)}{2d(K_0)}\left(\int_\Omega u^{2p}dx\right)^\frac{1}{2}\left(\int_\Omega |\nabla v|^4dx\right)^\frac{1}{2}\\
&\leq \frac{c_4^2c_6^2p(p-1)}{2d(K_0)}\|u^\frac{p}{2}\|_{L^4}^2 \\
&\leq c_7(\|\nabla u^\frac{p}{2}\|_{L^2}^{2(1-\frac{1}{p})}\|u^\frac{p}{2}\|_{L^\frac{4}{p}}^{\frac{2}{p}}+\|u^\frac{p}{2}\|_{L^\frac{4}{p}}^2)\\
&\leq c_7 c_5\|\nabla u^\frac{p}{2}\|_{L^2}^{2(1-\frac{1}{p})}+c_7c_5^p\\
&\leq \frac{(p-1)d(K_0)}{p}\|\nabla u^\frac{p}{2}\|_{L^2}^2+\frac{d(K_0)}{p}\left(\frac{c_7c_5}{d(K_0)}\right)^{p}
+c_7c_5^p,
\end{split}
\end{equation}
and
\begin{equation}\label{L7-5}
\begin{split}
\gamma F(K_0) \int_\Omega u^pdx
&=\gamma F(K_0)\|u^\frac{p}{2}\|_{L^2}^2\\
&\leq c_8(\|\nabla u^\frac{p}{2}\|_{L^2}^{2(1-\frac{2}{p})}\|u^\frac{p}{2}\|_{L^\frac{4}{p}}^{\frac{4}{p}}+\|u^\frac{p}{2}\|_{L^\frac{4}{p}}^2)\\
&\leq c_8 c_5^2\|\nabla u^\frac{p}{2}\|_{L^2}^{2(1-\frac{2}{p})}+c_8c_5^p\\
&\leq  \frac{(p-1)d(K_0)}{p}\|\nabla u^\frac{p}{2}\|_{L^2}^2+\frac{d(K_0)}{p}\left(\frac{c_8c_5^2}{d(K_0)}\right)^{p}
+c_8c_5^p.
\end{split}
\end{equation}
Substituting \eqref{L7-4} and \eqref{L7-5} into \eqref{L4-4}, and letting $c_9:=\frac{d(K_0)}{p}\left(\frac{c_7c_5}{d(K_0)}\right)^{p}+\frac{d(K_0)}{p}\left(\frac{c_8c_5^2}{d(K_0)}\right)^{p}+(c_7+c_8)c_5^p$, we obtain
\begin{equation*}
\frac{d}{dt}\int_\Omega u^pdx +p\theta\int_\Omega u^{p}dx\leq c_9,
\end{equation*}
which, combined with the Gronwall's inequality, yields
\begin{equation}\label{Lp}
\|u(\cdot,t)\|_{L^p}^p\leq e^{-p\theta t}\|u_0\|_{L^p}^p+\frac{c_9}{p\theta}(1-e^{-p\theta t})\leq \|u_0\|_{L^p}^p+\frac{c_9}{p\theta}.
\end{equation}
Then choosing $p=4$ in \eqref{Lp} and using \eqref{L3-2} again, one can find a constant  $c_{10}>0$ independent of $p$ such that $\|\nabla v(\cdot,t)\|_{L^\infty}\leq c_7$. Then using the Moser iteration procedure (cf. \cite{MS}), one has \eqref{L7-1}. Hence we complete the proof of Lemma \ref{L7}.
\end{proof}

\begin{proof}[Proof of Theorem $\ref{GB}$] Theorem \ref{GB} is a consequence of Lemma \ref{LS} and Lemma \ref{L7}.
\end{proof}

\section{Globally asymptotic stability of solutions}
{\color{black} Based on some ideas in \cite{JW-JDE-2017}, we shall prove the {\color{black}global stability results in} Theorem $\ref{WP}$ in this section by the method of  Lyapunov functionals with the help of LaSalle's invariant principle under the hypotheses (H1)-(H4)}. {\color{black} Here we employ the same Lyapunov functionals as in  \cite{JW-JDE-2017} and hence will skip many similar computations.}  To proceed, we first derive some regularity results for the solution $(u,v)$ by using some ideas in \cite{Tao-Winkler-SIMA-2011}.
\begin{lemma}\label{L9} Let $(u,v)$ be the nonnegative global classical solution of system \eqref{1-1} obtained in Theorem \ref{GB}.  Then there  exist $\sigma\in(0,1)$ and $C>0$ 
such that
\begin{equation}\label{Lv-1}
\|v\|_{C^{2+\sigma,1+\frac{\sigma}{2}}(\bar{\Omega}\times[t,t+1])}\leq C, \ \mathrm{for\ all}\ \  t> 1.
\end{equation}
\end{lemma}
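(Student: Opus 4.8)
The plan is to regard $v$ as the solution of the linear inhomogeneous heat equation
\[
v_t = D\Delta v + g, \qquad g := f(v) - uF(v),
\]
under the homogeneous Neumann condition, and to progressively upgrade the regularity of the source $g$ by a bootstrap argument until parabolic Schauder theory yields the claimed H\"older bound. All estimates will be performed on the time slabs $Q_t := \bar{\Omega}\times[t-1,t+1]$ for $t>1$ (so that $t-1>0$); since the equation is autonomous and the \emph{a priori} bounds of Theorem~\ref{GB} are uniform in time, every constant produced below will be independent of $t$, which is exactly what \eqref{Lv-1} demands.

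\textbf{Step 1 (bounded source gives $C^{1+\beta}$ for $v$).} From Theorem~\ref{GB} we have $\|u(\cdot,t)\|_{L^\infty}\le C$ and $0\le v\le K_0$ uniformly in $t$. Since $F,f\in C^1$, the source $g$ is bounded in $L^\infty(\Omega\times(0,\infty))$. Applying the parabolic $L^p$ estimate to $v_t=D\Delta v+g$ yields $v\in W^{2,1}_p(Q_t)$ for every $p\in(1,\infty)$ with a $t$-independent bound; choosing $p>4=n+2$ (recall $n=2$) and using the parabolic Sobolev embedding $W^{2,1}_p\hookrightarrow C^{1+\beta,\frac{1+\beta}{2}}$, we obtain $v\in C^{1+\beta,\frac{1+\beta}{2}}(\bar{\Omega}\times[t,t+1])$ for some $\beta\in(0,1)$, uniformly in $t$. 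In particular $v$ is H\"older continuous in $(x,t)$ with bounded gradient.

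\textbf{Step 2 (H\"older continuity of $u$ --- the crux).} I expect this to be the main obstacle, because so far $u$ is known only to be bounded, whereas the Schauder step below needs $u$ H\"older. Writing the first equation of \eqref{1-1} in divergence form,
\[
u_t = \nabla\cdot\big(d(v)\nabla u - u\,\chi(v)\nabla v\big) + \gamma u F(v) - \theta u - \alpha u^2,
\]
its principal coefficient $d(v)$ is bounded and, by (H1) together with $0\le v\le K_0$, uniformly elliptic since $d(v)\ge d(K_0)>0$; the flux datum $\chi(v)\nabla v$ is bounded because $\chi$ is continuous and $\|\nabla v\|_{L^\infty}\le C$, and the zeroth-order terms are bounded because $u$ is. Hence the De Giorgi--Nash--Moser theory for divergence-form parabolic equations with bounded measurable coefficients applies and gives $u\in C^{\rho,\frac{\rho}{2}}(\bar{\Omega}\times[t,t+1])$ for some $\rho\in(0,1)$, with a bound independent of $t$.

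\textbf{Step 3 (Schauder).} With $u$ and $v$ now both H\"older continuous and $F,f\in C^1$, the source $g=f(v)-uF(v)$ lies in $C^{\sigma,\frac{\sigma}{2}}$ with $\sigma:=\min\{\beta,\rho\}$, uniformly in $t$. Applying parabolic Schauder estimates up to the boundary (using the homogeneous Neumann condition) once more to $v_t=D\Delta v+g$ delivers $\|v\|_{C^{2+\sigma,1+\frac{\sigma}{2}}(\bar{\Omega}\times[t,t+1])}\le C$ with $C$ independent of $t$, which is \eqref{Lv-1}. The only points requiring genuine care, beyond citing standard parabolic theory, are verifying the structural hypotheses of the De Giorgi--Nash--Moser step in Step~2 and confirming that each constant across the three steps inherits its time-independence from the uniform bounds of Theorem~\ref{GB}.
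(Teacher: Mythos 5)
Your proposal is correct and takes essentially the same route as the paper: the paper likewise writes the $u$-equation in divergence form, verifies the structure conditions $A(x,t,\nabla u)\cdot\nabla u\geq \tfrac{d(K_0)}{2}|\nabla u|^2-C$, $|A|\leq d(0)|\nabla u|+C$, $|B|\leq C$ using the uniform bounds from Theorem \ref{GB}, invokes the H\"older regularity theory of Porzio--Vespri (a De Giorgi--Nash--Moser-type result, applied up to the boundary), and then applies parabolic Schauder theory to the $v$-equation. Your preliminary Step 1 (parabolic $L^p$ theory giving $v\in C^{1+\beta,\frac{1+\beta}{2}}$, hence space-time H\"older continuity of $v$ for the Schauder step) is left implicit in the paper, and making it explicit is a sound refinement rather than a deviation.
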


\begin{proof}
From Theorem  \ref{GB}, we can find three positive constants $c_1,c_2,c_3$ such that
\begin{equation*}
0<u(x,t)\leq c_1, 0<v(x,t)\leq c_2 \  \ \mathrm{and}\ \ |\nabla v(x,t)|\leq c_3 \ \mathrm{for \ all }\ x\in \Omega \ \mathrm{and} \ \ t>0.
\end{equation*}
The first equation of system \eqref{1-1} can be rewritten as
\begin{equation*}\label{L9-3}
u_t=\nabla \cdot A(x,t,\nabla u)+B(x,t) \ \mathrm{for\ all}\  x\in \Omega \ \mathrm{and} \ \ t>0,
\end{equation*}
where
\begin{equation*}
A(x,t,\nabla u):=d(v) \nabla u-\chi (v)u \nabla v
\end{equation*}
and
\begin{equation*}
B(x,t):=(\gamma F(v) -\theta -\alpha u)u.
\end{equation*}
By the assumptions in (H1)-(H3) and using the Young's inequality,   then for all $x\in\Omega$ and $t>0$, we  obtain  that
\begin{equation}\label{L9-4}
\begin{split}
A(x,t,\nabla u)\cdot \nabla u
&=d(v)|\nabla u|^2-\chi(v)u\nabla v\cdot\nabla u\\
&\geq d(v)|\nabla u|^2-|\chi(v)| u|\nabla v| |\nabla u|\\
&\geq \frac{d(v)}{2}|\nabla u|^2-\frac{|\chi(v)|^2}{2d(v)} u^2|\nabla v|^2\\
&\geq \frac{d(c_2)}{2}|\nabla u|^2-c_4
\end{split}
\end{equation}
and
\begin{equation}\label{L9-5}
|A(x,t,\nabla u)|\leq d(0)|\nabla u|+c_5
\end{equation}
as well as
\begin{equation}\label{L9-6}
|B(x,t)|\leq c_6.
\end{equation}
Then \eqref{L9-4}-\eqref{L9-6} allow us to apply the H\"{o}lder regularity  for quasilinear parabolic equations \cite[Theorem 1.3 and Remark 1.4]{RT} to obtain
$
\|u\|_{{C^{\sigma,\frac{\sigma}{2}}}(\bar{\Omega}\times[t,t+1])}\leq c_7$ for all  $t> 1$. Moreover, applying the standard parabolic schauder theory \cite{La} to the second equation of  \eqref{1-1}, one has \eqref{Lv-1}. Then the proof of Lemma \ref{L9} is completed.
\end{proof}
With the results in Lemma \ref{L9} in hand, we next derive the following results.
\begin{lemma}\label{L4u}
Suppose the conditions in Lemma \ref{L9} hold. Then we can find a constant $C>0$ such that
\begin{equation}\label{L4u-1}
\|\nabla u\|_{L^4}\leq C \ \mathrm{for\  all}\ \ t>1.
\end{equation}
\end{lemma}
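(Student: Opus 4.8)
The plan is to regard the first equation of \eqref{1-1} as a scalar parabolic equation for $u$ with bounded coefficients, and to run two successive energy estimates: a first one controlling $\|\nabla u\|_{L^2}$, and a second one whose space--time integrated output is promoted to a uniform-in-time bound by a uniform Gr\"onwall argument. To set this up I would rewrite the equation in the non-divergence form \eqref{RS-1},
\[
u_t=d(v)\Delta u+\big(d'(v)-\chi(v)\big)\nabla v\cdot\nabla u-\chi'(v)u|\nabla v|^2-\chi(v)u\Delta v+\big(\gamma F(v)-\theta-\alpha u\big)u.
\]
By Theorem \ref{GB} and Lemma \ref{L9}, for $t>1$ the functions $u,v,\nabla v,\Delta v$ (and in fact $v_t$) are bounded in $L^\infty$ uniformly in $t$, while $d(v)\ge d(K_0)>0$. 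Hence the diffusion is uniformly elliptic, the drift $\mathbf{b}:=(d'(v)-\chi(v))\nabla v$ is bounded, and the remaining source $h:=-\chi'(v)u|\nabla v|^2-\chi(v)u\Delta v+(\gamma F(v)-\theta-\alpha u)u$ is bounded in $L^\infty$, all uniformly for $t>1$.

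For the first estimate I would test with $-\Delta u$. The Neumann condition $\partial_\nu u=0$ makes the boundary terms vanish, giving
\[
\tfrac12\tfrac{d}{dt}\|\nabla u\|_{L^2}^2+\int_\Omega d(v)|\Delta u|^2\,dx=-\int_\Omega(\mathbf{b}\cdot\nabla u)\Delta u\,dx-\int_\Omega h\,\Delta u\,dx.
\]
Using $d(v)\ge d(K_0)$ and Young's inequality to absorb the right-hand terms into $\tfrac12 d(K_0)\|\Delta u\|_{L^2}^2$, together with the identity $\int_\Omega|\nabla u|^2\,dx=-\int_\Omega u\,\Delta u\,dx\le\|u\|_{L^2}\|\Delta u\|_{L^2}$ (again from $\partial_\nu u=0$) and the uniform bound on $\|u\|_{L^2}$ from Theorem \ref{GB}, I would arrive at $\tfrac{d}{dt}\|\nabla u\|_{L^2}^2+c\,\|\nabla u\|_{L^2}^4\le C$. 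This Riccati-type inequality yields the pointwise bound $\sup_{t>1}\|\nabla u\|_{L^2}\le C$, and integrating the differential inequality over $(t,t+1)$ then gives $\int_t^{t+1}\|\Delta u\|_{L^2}^2\,ds\le C$.

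It remains to upgrade this to a uniform control of $\|\nabla u\|_{L^4}$. In two dimensions the Gagliardo--Nirenberg inequality gives $\|\nabla u\|_{L^4}^2\le C\|\Delta u\|_{L^2}\|\nabla u\|_{L^2}+C\|\nabla u\|_{L^2}^2$, so the previous step already yields $\int_t^{t+1}\|\nabla u\|_{L^4}^4\,ds\le C$; the point is to make this pointwise in $t$. For this I would produce a second differential inequality $y'(t)\le g(t)y(t)+h(t)$ for a second-order quantity equivalent to $\|\nabla u\|_{L^4}^4$ (either by differentiating the equation in $t$ and testing the equation for $u_t$ with $u_t$, which controls $\|u_t\|_{L^2}$ and hence $\|\Delta u\|_{L^2}$ through the equation, or by testing directly with $-\nabla\cdot(|\nabla u|^2\nabla u)$, which produces the coercive term $\int_\Omega|\nabla u|^2|D^2u|^2\,dx$ modulo lower-order and boundary contributions), with $\int_t^{t+1}(g+h)\le C$. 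Applying the uniform Gr\"onwall lemma with the integrated bound $\int_t^{t+1}y\le C$ then gives $\sup_{t>1}y(t)\le C$, which is \eqref{L4u-1}.

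The main obstacle is precisely the feature distinguishing this from the constant-motility case: because $d(v)$ depends on $(x,t)$, there is no associated constant-coefficient semigroup, so the clean $L^p$--$L^q$ smoothing estimates that make $\|\nabla u\|_{L^4}$ immediate when $d,\chi$ are constant are unavailable, forcing the two-step energy scheme above. The delicate part is converting the space--time integrated bounds (on $\|\Delta u\|_{L^2}^2$, and then on $\|\nabla u\|_{L^4}^4$) into genuinely uniform-in-time bounds via the uniform Gr\"onwall lemma, while carefully handling the boundary terms generated in the second estimate using the regularity of $\partial\Omega$ and the bound on $\|v\|_{C^{2+\sigma,1+\sigma/2}}$ from Lemma \ref{L9}. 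I would also remark that, alternatively, since Lemma \ref{L9} already provides $u\in C^{\sigma,\sigma/2}$ and $v\in C^{2+\sigma,1+\sigma/2}$, the coefficients $d(v),\mathbf{b}$ and the source $h$ above are H\"older continuous in space--time, so parabolic Schauder theory applied to the non-divergence form would even yield $u\in C^{2+\sigma,1+\sigma/2}$ and hence $\|\nabla u\|_{L^\infty}\le C$, a conclusion stronger than \eqref{L4u-1}.
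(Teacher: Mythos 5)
Your scheme is sound, and its core coincides with the paper's at the crucial step: the paper also proves \eqref{L4u-1} by computing $\frac{1}{4}\frac{d}{dt}\int_\Omega|\nabla u|^4dx$ (your second option, testing with $-\nabla\cdot(|\nabla u|^2\nabla u)$), extracting the coercive terms $\int_\Omega\big|\nabla|\nabla u|^2\big|^2dx$ and $\int_\Omega|\nabla u|^2|D^2u|^2dx$, and controlling the boundary integral via the inequality $\frac{\partial|\nabla u|^2}{\partial\nu}\le 2\lambda'|\nabla u|^2$ on $\partial\Omega$ combined with a trace inequality --- this pair is the concrete ingredient hiding behind your phrase ``carefully handling the boundary terms,'' and you would need to supply it explicitly. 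Where you genuinely diverge is in how the resulting inequality $\frac{d}{dt}\int_\Omega|\nabla u|^4dx\le c_8\int_\Omega|\nabla u|^4dx+c_9$ is closed. You first run a preliminary $H^1$ estimate (testing with $-\Delta u$, with the Riccati absorption based on $\|u\|_{L^2}\le C$) to get $\sup_{t>1}\|\nabla u\|_{L^2}\le C$ and $\int_t^{t+1}\|\Delta u\|_{L^2}^2ds\le C$, hence $\int_t^{t+1}\|\nabla u\|_{L^4}^4ds\le C$ by Gagliardo--Nirenberg, and then invoke the uniform Gr\"{o}nwall lemma; all of these steps check out. The paper avoids both the preliminary estimate and uniform Gr\"{o}nwall by a different trick: it integrates the superlinear term by parts, $(c_8+2)\int_\Omega|\nabla u|^4dx=-c_{11}\int_\Omega u\,\nabla|\nabla u|^2\cdot\nabla u\,dx-c_{11}\int_\Omega u|\nabla u|^2\Delta u\,dx$, and absorbs it into the two dissipation terms using only $\|u\|_{L^\infty}\le C$, which converts \eqref{Jn} into the damped linear inequality \eqref{L4e-4}, namely $\frac{d}{dt}\int_\Omega|\nabla u|^4dx+\int_\Omega|\nabla u|^4dx\le C$, after which a plain Gr\"{o}nwall argument from $t=1$ finishes. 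So your route is more modular and uses only standard devices, at the cost of an extra a priori estimate; the paper's is shorter once one sees the absorption trick. Finally, your closing remark is correct and worth keeping: since Lemma \ref{L9} makes all coefficients of the nondivergence form \eqref{RS-1} H\"{o}lder continuous (with diffusion bounded below by $d(K_0)>0$), parabolic Schauder theory yields a uniform $C^{2+\sigma,1+\sigma/2}$ bound for $u$ on $\bar{\Omega}\times[t,t+1]$ for $t>1$, hence $\|\nabla u\|_{L^\infty}\le C$, which is strictly stronger than \eqref{L4u-1}; the paper simply opts for the self-contained energy argument instead.
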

\begin{proof}
From the first equation of system \eqref{1-1}, we have
\begin{equation}\label{L4e-2}
\begin{split}
\frac{1}{4}\frac{d}{dt}\int_\Omega |\nabla u|^4dx
&=\int_\Omega |\nabla u|^2\nabla u\cdot \nabla u_tdx\\
&=\int_\Omega |\nabla u|^2\nabla u\cdot \nabla (\nabla \cdot(d(v)\nabla u))dx-\int_\Omega |\nabla u|^2\nabla u\cdot \nabla (\nabla \cdot(\chi(v)u\nabla v))dx\\
&\ \ \ \ + \int_\Omega \nabla (\gamma F(v) u-\theta u -\alpha u^2)\cdot\nabla u |\nabla u|^2dx \\
&=: I_1+I_2+I_3.
\end{split}
\end{equation}
With the integration by parts, the term $I_1$ becomes
\begin{equation*}\label{I1}
\begin{split}
I_1
&={\color{black} -\int_\Omega (|\nabla u|^2\Delta u)  \nabla \cdot(d(v)\nabla u)dx-\int_\Omega (\nabla |\nabla u|^2\cdot \nabla u)\nabla \cdot(d(v)\nabla u)dx}\\
&={\color{black}\int_\Omega d(v)|\nabla u|^2(\nabla \Delta u\cdot \nabla u) dx
-\int_\Omega d'(v) (\nabla |\nabla u|^2\cdot \nabla u)( \nabla u\cdot \nabla v)dx},
\end{split}
\end{equation*}
which, combined with the fact $\nabla \Delta u\cdot  \nabla  u=\frac{1}{2}\Delta |\nabla u|^2-|D^2 u|^2$ {\color{black} where $D^2 u$ denotes the Hessian matrix of $u$}, gives
\begin{equation}\label{I1-1}
\begin{split}
I_1&=\frac{1}{2}\int_\Omega d(v)|\nabla u|^2 \Delta |\nabla u|^2dx-\int_\Omega d(v) |\nabla u|^2 |D^2 u|^2dx-\int_\Omega d'(v)(\nabla |\nabla u|^2\cdot \nabla u)(\nabla u\cdot\nabla v)dx\\
&=\frac{1}{2}\int_{\partial\Omega} d(v)|\nabla u|^2\frac{\partial |\nabla u|^2}{\partial \nu}dS-\frac{1}{2}\int_\Omega d'(v)|\nabla u|^2\nabla v\cdot \nabla |\nabla u|^2dx-\frac{1}{2}\int_\Omega d(v)|\nabla |\nabla u|^2|^2dx\\
&\ \ \ -\int_\Omega d(v) |\nabla u|^2 |D^2 u|^2dx-\int_\Omega d'(v)(\nabla |\nabla u|^2\cdot \nabla u)(\nabla u\cdot\nabla v)dx\\
&\leq \frac{1}{2}\int_{\partial\Omega} d(v)|\nabla u|^2\frac{\partial |\nabla u|^2}{\partial \nu}dS-\frac{1}{2}\int_\Omega d(v)|\nabla |\nabla u|^2|^2dx -\int_\Omega d(v) |\nabla u|^2 |D^2 u|^2dx\\
&\ \ \ \ +\frac{3}{2}\int_\Omega |d'(v)|\Big|\nabla |\nabla u|^2\Big||\nabla u|^2|\nabla v|dx.
\end{split}
\end{equation}
With the facts $\|u(\cdot,t)\|_{L^\infty}+\|v(\cdot,t)\|_{W^{1,\infty}}\leq c_1$ and \eqref{Lv-1}, we have
\begin{equation*}
\begin{split}
\nabla \cdot (\chi(v)u\nabla v)
&=\chi'(v) u |\nabla v|^2+\chi(v)\nabla u\cdot \nabla v+\chi(v) u\Delta v\\
&\leq c_2(1+|\nabla u|) \ \mathrm{for\ all\ } t>1,
\end{split}
\end{equation*}
which updates $I_2$ as
\begin{equation}\label{I2}
\begin{split}
I_2
&=\int_\Omega \nabla |\nabla u|^2\cdot \nabla u \nabla \cdot (\chi(v)u\nabla v)dx+\int_\Omega |\nabla u|^2\Delta u\nabla \cdot (\chi(v)u\nabla v)dx\\
&\leq c_2\int_\Omega |\nabla u||\nabla |\nabla u|^2|(1+|\nabla u|)dx+c_2\int_\Omega |\nabla u|^2|\Delta u|(1+|\nabla u|)dx.\\
\end{split}
\end{equation}
Moreover, the term $I_3$ can be estimated as follows:
\begin{equation}\label{I3}
\begin{split}
I_3&=\int_\Omega \nabla (\gamma F(v) u-\theta u -\alpha u^2)\cdot\nabla u |\nabla u|^2dx\\
&=\int_\Omega \gamma F'(v) u\nabla v\cdot \nabla u|\nabla u|^2dx+\gamma \int_\Omega  F(v)|\nabla u|^4dx-\theta\int_\Omega |\nabla u|^4dx-2\alpha \int_\Omega u|\nabla u|^4dx\\
&\leq c_3\int_\Omega |\nabla u|^4dx+c_4.
\end{split}
\end{equation}
Substituting \eqref{I1-1}-\eqref{I3} into \eqref{L4e-2}, and using the fact $0<d(K_0)\leq d(v)\leq d(0)$,  we have
\begin{equation}\label{L4e-3}
\begin{split}
&\frac{1}{4}\frac{d}{dt}\int_\Omega |\nabla u|^4dx+\frac{d(K_0)}{2}\int_\Omega |\nabla |\nabla u|^2|^2dx +d(K_0)\int_\Omega  |\nabla u|^2 |D^2 u|^2dx\\
&\leq \frac{1}{2}\int_{\partial \Omega} d(v)|\nabla u|^2\frac{\partial |\nabla u|^2}{\partial \nu}dS+\frac{3}{2}\int_\Omega |d'(v)||\nabla |\nabla u|^2||\nabla u|^2|\nabla v|dx\\
&\ \ \ \ +c_2\int_\Omega |\nabla u||\nabla |\nabla u|^2|(1+|\nabla u|)dx+c_2\int_\Omega |\nabla u|^2|\Delta u|(1+|\nabla u|)dx\\
&\ \ \ \ +c_3\int_\Omega |\nabla  u|^4dx+c_4\\
&\triangleq J_1+J_2+J_3+J_4+c_3\int_\Omega |\nabla  u|^4dx+c_4.\\
\end{split}
\end{equation}
With the inequality $\frac{\partial|\nabla u|^2}{\partial \nu}\leq 2\lambda'|\nabla u|^2$ on $\partial\Omega$ (see \cite[Lemma 4.2]{MS-2014})
 we derive
 \begin{equation}\label{J1}
\begin{split}
J_1
 &\leq \frac{d(0)}{2}\Big|\int_{\partial\Omega}|\nabla u|^{2}\frac{\partial |\nabla u|^2}{\partial \nu}dS\Big|\\
&\leq \lambda' d(0)\||\nabla u|^2\|_{L^2(\partial \Omega)}^2\leq \frac{d(K_0)}{8}\int_\Omega |\nabla|\nabla u|^2|^2dx+c_5\int_\Omega |\nabla  u|^4dx,
\end{split}
\end{equation}
 where we have used the following trace inequality (cf. \cite[Remark 52.9]{PP-2007}):
\begin{equation*}
\|\varphi\|_{L^2(\partial\Omega)}\leq \varepsilon \|\nabla {\color{black}\varphi}\|_{L^2(\Omega)}+C_\varepsilon\|{\color{black}\varphi}\|_{L^2(\Omega)}, \ \mathrm{for\ any}\ \varepsilon >0.
\end{equation*}
By the boundedness of $\|\nabla v\|_{L^\infty}$ and Young's inequality, one derives
\begin{equation}\label{J2-J3}
\begin{split}
J_2+J_3
&\leq c_6\int_\Omega |\nabla |\nabla u|^2||\nabla u|(1+|\nabla u|)dx\\
&\leq  \frac{d(K_0)}{8}\int_\Omega |\nabla|\nabla u|^2|^2dx+c_7\int_\Omega |\nabla  u|^4dx+c_7.
\end{split}
\end{equation}
 Since $|\Delta u|\leq \sqrt{2} |D^2u|$, one can estimate $J_4$ as follows
 \begin{equation}\label{J4}
 \begin{split}
 J_4
 &\leq c_2\sqrt{2}\int_\Omega |\nabla u|^2|D^2u|dx+c_2\sqrt{2}\int_\Omega |\nabla u|^3|D^2u|dx\\
 &\leq \frac{d(K_0)}{2}\int_\Omega |\nabla u|^2|D^2 u|^2dx+\frac{2c_2^2}{d(K_0)}\int_\Omega (|\nabla u|^2+|\nabla u|^4)dx\\
 &\leq \frac{d(K_0)}{2}\int_\Omega |\nabla u|^2|D^2 u|^2dx+\frac{4c_2^2}{d(K_0)}\int_\Omega |\nabla  u|^4dx+\frac{c_2^2|\Omega|}{2d(K_0)}.
 \end{split}
 \end{equation}
Substituting \eqref{J1}-\eqref{J4} into \eqref{L4e-3}, we end up with
\begin{equation}\label{Jn}
\begin{split}
\frac{d}{dt}\int_\Omega |\nabla u|^4dx+d(K_0)\int_\Omega |\nabla |\nabla u|^2|^2dx &+2d(K_0)\int_\Omega  |\nabla u|^2 |D^2 u|^2dx\\
&\leq c_8\int_\Omega |\nabla  u|^4dx+c_9,\\
\end{split}
\end{equation}
where $c_8:= 4\left(c_3+c_5+c_7+\frac{4c_2^2}{d(K_0)}\right)$ and $c_9:=4\left(c_4+c_7+\frac{c_2^2|\Omega|}{2d(K_0)}\right)$.
On the other hand, integrating by parts, noting $\|u(\cdot,t)\|_{L^\infty}\leq c_{10}$ and  using the Young's inequality, one has
\begin{equation*}\label{L4*}
\begin{split}
(c_8+2)\int_\Omega |\nabla  u|^4dx
&=c_{11}\int_\Omega |\nabla u|^2\nabla u\cdot \nabla udx\\
&=-c_{11}\int_\Omega u \nabla |\nabla u|^2 \cdot \nabla udx-c_{11}\int_\Omega u|\nabla u|^2\Delta udx\\
&\leq c_{10}c_{11}\int_\Omega |\nabla |\nabla u|^2| |\nabla u|dx+c_{10}c_{11}\sqrt{2}\int_\Omega |\nabla u|^2|D^2 u|dx\\
&\leq d(K_0)\int_\Omega |\nabla |\nabla u|^2|^2dx+2d(K_0)\int_\Omega |\nabla u|^2|D^2 u|^2dx+\frac{c_{10}^2c_{11}^2}{2}\int_\Omega |\nabla u|^2dx\\
&\leq d(K_0)\int_\Omega |\nabla |\nabla u|^2|^2dx+2d(K_0)\int_\Omega |\nabla u|^2|D^2 u|^2dx\\
&\ \ \ \ + \int_\Omega |\nabla u|^4dx+\frac{|\Omega |c_{10}^4c_{11}^4}{16},\\
\end{split}
\end{equation*}
which updates \eqref{Jn} to
\begin{equation}\label{L4e-4}
\frac{d}{dt}\int_\Omega |\nabla u|^4dx+\int_\Omega |\nabla u|^4dx\leq c_9+\frac{c_{10}^4c_{11}^4|\Omega |}{16}.
\end{equation}
Applying the Gronwall's inequality to \eqref{L4e-4} along with the fact $\|\nabla u(\cdot,1)\|_{L^\infty}\leq c_{12}$, one obtains \eqref{L4u-1}. Thus the proof of Lemma \ref{L4u} is finished.
\end{proof}
Next, we state a basic result which will be used later.
\begin{lemma}[\cite{JW-JDE-2017}]\label{blemma}
Let $F$ satisfy the {\color{black}conditions} in (H2) and define a function for some constant {\color{black}$\omega_*> 0$}:
$$\zeta(v)=\int_{\omega_*}^v \frac{F(\sigma)-F(\omega_*)}{F(\sigma)}d\sigma.$$
Then $\zeta(v)$ is a convex function such that $\zeta(v)\geq 0$. If $(u,v)$ is a solution of (\ref{1-1}) satisfying $v \to \omega_*$ as $t \to \infty$, then there is a constant $T_0>0$ such that for all $t \geq T_0$ it holds that
\begin{equation*}\label{C1-9n}
\frac{F'(\omega_*)}{4F(\omega_*)}(v-\omega_*)^2\leq\zeta(v)=\int_{\omega_*}^v \frac{F(\sigma)-F(\omega_*)}{F(\sigma)}d\sigma\leq \frac{F'(\omega_*)}{F(\omega_*)}(v-\omega_*)^2.
\end{equation*}
\end{lemma}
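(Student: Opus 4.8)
The plan is to separate the two assertions: the qualitative statement (convexity and non-negativity of $\zeta$), which is pure one-variable calculus, and the quantitative two-sided bound, which is a localized Taylor estimate activated by the hypothesis $v\to\omega_*$. First I would differentiate $\zeta$ directly. Since $\zeta(v)=\int_{\omega_*}^v\frac{F(\sigma)-F(\omega_*)}{F(\sigma)}\,d\sigma$, the fundamental theorem of calculus gives $\zeta'(v)=1-\frac{F(\omega_*)}{F(v)}$ and then $\zeta''(v)=\frac{F(\omega_*)F'(v)}{F(v)^2}$. By (H2), the constant $\omega_*>0$ forces $F(\omega_*)>0$, while $F(v)>0$ and $F'(v)>0$ on $(0,\infty)$; hence $\zeta''(v)>0$ and $\zeta$ is (strictly) convex. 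Because $\zeta(\omega_*)=0$ and $\zeta'(\omega_*)=1-\frac{F(\omega_*)}{F(\omega_*)}=0$, the point $v=\omega_*$ is a critical point of a convex function, hence its global minimizer, which yields $\zeta(v)\ge\zeta(\omega_*)=0$ for all $v$ in the domain.

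For the sharp two-sided estimate I would invoke Taylor's theorem with Lagrange remainder about $\omega_*$. Using $\zeta(\omega_*)=\zeta'(\omega_*)=0$, one obtains, for each admissible $v$, a point $\xi$ lying strictly between $v$ and $\omega_*$ with
\begin{equation*}
\zeta(v)=\tfrac12\,\zeta''(\xi)\,(v-\omega_*)^2 .
\end{equation*}
The relevant reference value is $\zeta''(\omega_*)=\frac{F'(\omega_*)}{F(\omega_*)}$. Since $F\in C^1$ and $F>0$ near $\omega_*$, the function $\zeta''$ is continuous at $\omega_*$, so I would fix $\delta>0$ small enough that $\tfrac12\,\zeta''(\omega_*)\le\zeta''(\xi)\le 2\,\zeta''(\omega_*)$ whenever $\abs{\xi-\omega_*}\le\delta$. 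Substituting these two bounds into the Taylor identity gives exactly
\begin{equation*}
\frac{F'(\omega_*)}{4F(\omega_*)}(v-\omega_*)^2\le\zeta(v)\le\frac{F'(\omega_*)}{F(\omega_*)}(v-\omega_*)^2
\end{equation*}
for all $v$ with $\abs{v-\omega_*}\le\delta$ (note $\xi$ then automatically also satisfies $\abs{\xi-\omega_*}\le\delta$).

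Finally I would transfer this to the solution. Because $v(\cdot,t)\to\omega_*$ uniformly as $t\to\infty$, there exists $T_0>0$ with $\norm{v(\cdot,t)-\omega_*}_{L^\infty}\le\delta$ for every $t\ge T_0$; applying the pointwise inequality at each $x\in\Omega$ with the choice above then establishes the claimed bound for all $t\ge T_0$. The only place demanding care is the bookkeeping of constants: the factor $\tfrac12$ from the Taylor remainder must be paired with the continuity window $[\tfrac12\zeta''(\omega_*),\,2\zeta''(\omega_*)]$ so that the products collapse precisely to the coefficients $\tfrac14$ and $1$ appearing in the statement. There is no genuine analytic obstacle beyond this constant-tracking, and the continuity of $\zeta''$ (hence the existence of $\delta$) is guaranteed by $F\in C^1$ together with $F(\omega_*)>0$.
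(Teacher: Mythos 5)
Your proof is correct and complete. Note that the paper itself offers no proof of this lemma --- it is imported verbatim from \cite{JW-JDE-2017} --- so the only meaningful comparison is with that reference, where the two-sided bound is obtained by computing $\lim_{v\to\omega_*}\zeta(v)/(v-\omega_*)^2=\tfrac{F'(\omega_*)}{2F(\omega_*)}$ via L'H\^{o}pital's rule and observing that this limit lies strictly between the coefficients $\tfrac{F'(\omega_*)}{4F(\omega_*)}$ and $\tfrac{F'(\omega_*)}{F(\omega_*)}$, so the squeeze holds once $v$ is close enough to $\omega_*$. Your route --- Taylor's theorem with Lagrange remainder about $\omega_*$ using $\zeta(\omega_*)=\zeta'(\omega_*)=0$, plus continuity of $\zeta''$ at $\omega_*$ to pin $\zeta''(\xi)$ in the window $[\tfrac12\zeta''(\omega_*),\,2\zeta''(\omega_*)]$ --- is mathematically equivalent in substance, and arguably cleaner since it makes the origin of the constants $\tfrac14$ and $1$ transparent. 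Both arguments rely on exactly the same ingredients ($F\in C^1$, $F(\omega_*)>0$, $F'(\omega_*)>0$) and the same localization step via uniform convergence of $v(\cdot,t)$ to $\omega_*$, which is the sense of convergence actually delivered by the stability analysis in Sections 4.1--4.2; your explicit remark that $\delta$ must be taken small enough (in particular $\delta<\omega_*$, keeping the Taylor interval inside the region where $F>0$) is the one point of care, and you handled it.
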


\subsection{Global stability of the prey-only steady state}
In this subsection, we shall prove that $(u,v)$ converges to $(0,K)$ in $L^\infty$ as $t \to \infty$ when $\gamma F(K)\leq\theta$
and further show that the convergence rate is exponential if $\gamma F(K)<\theta$ and  algebraic if $\gamma F(K)=\theta$ and  $\alpha>0$.
\begin{lemma}\label{wp1}
Let the assumptions in Theorem \ref{WP} hold.  If  $\gamma F(K)\leq \theta$, then the solution $(u,v)$ of $\eqref{1-1}$ satisfies $$\|u\|_{L^\infty}+\|v-K\|_{L^\infty} \to 0 \ \text{as} \ t \to \infty,$$
where the convergence rate is exponential if $\gamma F(K)< \theta$ and algegraic if $\gamma F(K)=\theta$ and $\alpha>0$.
   \end{lemma}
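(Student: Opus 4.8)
The plan is to run the Lyapunov–functional method adapted to the prey-only steady state $(0,K)$, following \cite{JW-JDE-2017}, and to feed its dissipation into the uniform regularity of Lemma \ref{L9} and the quadratic bounds of Lemma \ref{blemma} (taken with $\omega_*=K$) to obtain convergence and the claimed rates. Concretely I would set
\[
\mathcal{E}(t)=\int_\Omega u\,dx+\gamma\int_\Omega \zeta(v)\,dx,\qquad \zeta(v)=\int_K^v\frac{F(\sigma)-F(K)}{F(\sigma)}\,d\sigma,
\]
which is nonnegative because $\zeta\ge0$ and $u\ge0$. Differentiating and using the Neumann conditions, the divergence terms of the $u$-equation integrate to zero, while integrating the $D\Delta v$ contribution by parts yields $-\gamma D\int_\Omega\zeta''(v)|\nabla v|^2\,dx\le0$ since $\zeta''(v)=F(K)F'(v)/F(v)^2\ge0$ by (H1)--(H2). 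The decisive cancellation is that the $\gamma uF(v)$ term from each equation annihilates, leaving
\[
\frac{d}{dt}\mathcal{E}(t)=-\big(\theta-\gamma F(K)\big)\int_\Omega u\,dx-\alpha\int_\Omega u^2\,dx-\gamma D\int_\Omega\zeta''(v)|\nabla v|^2\,dx+\gamma\int_\Omega\phi(v)\big[F(v)-F(K)\big]\,dx.
\]

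The key sign observation comes from (H4): $\phi$ is strictly decreasing with $\phi(K)=0$ while $F$ is increasing, so $\phi(v)\big[F(v)-F(K)\big]\le0$ for every $v\in[0,K_0]$, with a double zero at $v=K$. Combined with $\gamma F(K)\le\theta$ this makes every term nonpositive, so $\mathcal{E}$ is nonincreasing and the dissipation $\mathcal{D}(t):=-\mathcal{E}'(t)\ge0$ satisfies $\int_1^\infty\mathcal{D}(t)\,dt\le\mathcal{E}(1)<\infty$. A compactness/double-zero argument on the compact interval $[0,K_0]$ gives pointwise bounds $-\phi(v)[F(v)-F(K)]\ge c_1(v-K)^2$ and, near $K$, $\zeta(v)\le c_2(v-K)^2$ via Lemma \ref{blemma}; hence $\mathcal{D}$ controls $\int_\Omega(v-K)^2\,dx$ together with $\int_\Omega u\,dx$ (if $\gamma F(K)<\theta$) or $\int_\Omega u^2\,dx$ (if $\alpha>0$). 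Since Lemma \ref{L9} bounds $v$ uniformly in $C^{2+\sigma,1+\frac{\sigma}{2}}$, the map $t\mapsto\mathcal{D}(t)$ is uniformly continuous, so a Barbalat-type argument forces $\mathcal{D}(t)\to0$, whence $\int_\Omega(v-K)^2\,dx\to0$ and $\int_\Omega u\,dx\to0$.

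To obtain the rates I would argue as follows. Once $v\to K$ in $L^2$ is known, Lemma \ref{blemma} applies for $t\ge T_0$, giving $\zeta(v)\le c_2(v-K)^2$ and therefore, when $\gamma F(K)<\theta$, the linear comparison $\mathcal{D}(t)\ge c\,\mathcal{E}(t)$ (the term $(\theta-\gamma F(K))\int_\Omega u$ dominates the $u$-part of $\mathcal{E}$); Gronwall then yields exponential decay of $\mathcal{E}$, hence of $\|u\|_{L^1}$ and $\|v-K\|_{L^2}$. In the borderline case $\gamma F(K)=\theta$ with $\alpha>0$ the linear $u$-term disappears, but $\alpha\int_\Omega u^2\ge \alpha|\Omega|^{-1}(\int_\Omega u)^2$ together with the quadratic bounds gives instead $\mathcal{D}(t)\ge c\,\mathcal{E}(t)^2$, so $\mathcal{E}(t)=O(1/t)$, i.e.\ algebraic decay. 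Finally I would convert these integral-norm decays into $L^\infty$ decay by Gagliardo--Nirenberg interpolation against the uniform bounds $\|u\|_{C^\sigma}+\|\nabla u\|_{L^4}\le C$ from Lemmas \ref{L9}--\ref{L4u} and $\|v-K\|_{W^{1,\infty}}\le C$, for instance $\|u\|_{L^\infty}\le C\|\nabla u\|_{L^4}^{4/5}\|u\|_{L^1}^{1/5}$ and $\|v-K\|_{L^\infty}\le C\|v-K\|_{W^{1,\infty}}^{b}\|v-K\|_{L^2}^{1-b}$, which preserves the exponential/algebraic rate up to the interpolation exponent.

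The main obstacle I anticipate is twofold. First, the well-definedness of $\mathcal{E}$ and of its derivative: since $\zeta(v)$ is singular as $v\to0^+$ (the integrand $\sim F(K)/F(\sigma)$ is non-integrable at $0$ when $F'(0)>0$), I must ensure $\int_\Omega\zeta(v)\,dx<\infty$ along the trajectory, which I would secure from the strict positivity and uniform regularity of $v$ and, where needed, a uniform positive lower bound for $v$ available once $u$ is small; the rate analysis itself only invokes $\zeta$ near $v=K$, where Lemma \ref{blemma} is valid. Second, the degenerate regime $\gamma F(K)=\theta$: here only the $\alpha u^2$ dissipation survives in the $u$-component, so the linear inequality $\mathcal{E}'\le-c\mathcal{E}$ fails and one must genuinely exploit the quadratic inequality $\mathcal{E}'\le-c\mathcal{E}^2$ to produce the algebraic rate rather than exponential decay.
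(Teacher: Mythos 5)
Your proposal is correct and follows essentially the same route as the paper: the same Lyapunov functional (yours is $\gamma V_1$ with $V_1=\frac{1}{\gamma}\int_\Omega u\,dx+\int_\Omega\int_K^v\frac{F(\sigma)-F(K)}{F(\sigma)}d\sigma\,dx$), the same derivative identity with the three nonpositive terms, the dichotomy $\mathcal{E}'\le -c\mathcal{E}$ versus $\mathcal{E}'\le -c\mathcal{E}^2$ for the exponential/algebraic rates, and the same Gagliardo--Nirenberg upgrade to $L^\infty$ via Lemmas \ref{L9}--\ref{L4u}. The only difference is presentational: the paper delegates the convergence and rate derivation to \cite[Lemma 4.2]{JW-JDE-2017} together with LaSalle's invariance principle, whereas you spell out those steps (Barbalat-type argument and the quadratic lower/upper bounds from (H4) and Lemma \ref{blemma}) explicitly.
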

\begin{proof}
{\color{black}
We start with the following functional
\begin{equation}\label{lf1}
{V}_1(u(t),v(t))=: V_1(t)={\color{black}\frac{1}{\gamma}\int_\Omega udx+\int_\Omega \int_K^{v} \frac{F(\sigma)-F(K)}{F(\sigma)}d\sigma dx}
\end{equation}
and show  $\frac{d }{dt}V_1(u,v)=\frac{d}{dt}V_1(t)\leq 0$  as well as $\frac{d }{dt}V_1(u,v)=0$ iff $(u,v)=(0,K)$.}
Indeed differentiating the functional (\ref{lf1}) with respect to $t$ and using the equations in $\eqref{1-1}$, one has
\begin{equation}\label{S1}
\begin{split}
{\color{black}\frac{d}{dt}V_1(t)}
&=\frac{1}{\gamma}\int_\Omega u_tdx+\int_\Omega \frac{F(v)-F(K)}{F(v)}v_tdx\\
&=\frac{1}{\gamma}\int_\Omega u(\gamma F(v)-\theta-\alpha u)dx+\int_\Omega \bigg(1-\frac{F(K)}{F(v)}\bigg)(D \Delta v-uF(v)+f(v))dx.
\end{split}
\end{equation}
For the second term on the right hand side of (\ref{S1}), we use the integration by parts with some calculations and cancelations to have
\begin{equation*}\label{S2}
\begin{split}
\frac{d}{dt}V_1(t)
&=\frac{1}{\gamma}\int_\Omega  u(\gamma F(K)-\theta-\alpha u)dx-D F(K) \int_\Omega F'(v)\bigg|\frac{\nabla v}{F(v)}\bigg|^2dx\\
& \ \ +\int_{\Omega} \frac{f(v)}{F(v)}(F(v)-F(K))dx.
\end{split}
\end{equation*}
The rest of the proof only depends on the assumption (H2)-(H4) and hence we can follow the exact procedures in the proof of \cite[Lemma 4.2]{JW-JDE-2017} along with the LaSalle's invariant principle (cf. \cite{Kelley-Peterson, LaSalle}) (or {\color{black} the  compact method together with the Lyapunov functional as in \cite{W-APL-2018,WW-ZAMP-2018}}) to prove that $(0,K)$ is globally asymptotically stable if $\gamma F(K)\leq \theta$ and the following convergence
 \begin{equation}\label{DR-1}
 \begin{cases}
 \|u\|_{L^1}+\|v-K\|_{L^2}^2\leq c_1 e^{-c_1 t}, &\ \mathrm{if}\  \gamma F(K)< \theta,\\
 \|u\|_{L^1}+\|v-K\|_{L^2}^2\leq c_2 (1+t)^{-1},&\ \mathrm{if}\  \gamma F(K)= \theta,\alpha>0
 \end{cases}
 \end{equation}
hold for $t>t_0$ with some $t_0>1$. Furthermore  the Gagliardo-Nirenberg inequality and the result $\|\nabla u\|_{L^4}\leq c_3$  for $t>t_0$ (cf. Lemma \ref{L4u} ) entail that
 \begin{equation}\label{ar1}
 \begin{split}
 \|u\|_{L^\infty}
 \leq c_4\|\nabla u\|_{L^4}^\frac{4}{5}\|u\|_{L^1}^\frac{1}{5}+c_4\|u\|_{L^1}
 \leq c_4c_3^\frac{4}{5}\|u\|_{L^1}^\frac{1}{5}+c_4\|u\|_{L^1},
 \end{split}
 \end{equation}
 which together with \eqref{DR-1} yields the decay rate of $u$. Similarly with  $\|\nabla v\|_{L^4}\leq c_5$ (cf. \eqref{L3-2}), we obtain
 \begin{equation}\label{ar2}
 \begin{split}
 \|v-K\|_{L^\infty}
 &\leq c_6 \|\nabla v\|_{L^4}^\frac{2}{3}\|v-K\|_{L^2}^\frac{1}{3}+c_6\|v-K\|_{L^2}\\
 &\leq c_6c_5^\frac{2}{3}\|v-K\|_{L^2}^\frac{1}{3}+c_6\|v-K\|_{L^2},
 \end{split}
 \end{equation}
 which combined with \eqref{DR-1} gives the decay rate of $v$. This completes the proof of Lemma \ref{wp1}.
\end{proof}

\subsection{Global stability of the co-existence steady state}
Now we turn to the case $\gamma F(K)>\theta$ and prove the {\color{black} homogeneous} coexistence steady state $(u_*, v_*)$ is globally asymptotically stable under certain conditions. We shall prove our result based on the following Lyapunov functional as in \cite{JW-JDE-2017}:
\begin{equation*}\label{lf2t}
{\color{black}{V}_2(u(t),v(t))=: V_2(t)=\frac{1}{\gamma}\int_\Omega \bigg(u-u_*-u_*\ln \frac{u}{u_*}\bigg)dx+\int_\Omega \int_{v_*}^{v} \frac{F(\sigma)-F(v_*)}{F(\sigma)}d\sigma dx.}
\end{equation*}
\begin{lemma}\label{wp2}
Assume the assumptions in Theorem \ref{WP} hold.  If  $\gamma F(K)>\theta$ and
\begin{equation*}
D \geq\max\limits_{0\leq v\leq K_0}{\color{black} \frac{ u_* |F(v)|^2|\chi(v)|^2}{4 \gamma F(v_*)F'(v)d(v)}}
\end{equation*}
where $u_*$ and $v_*$ are determined by (\ref{ss2}) and independent of  $D$, then the solution $(u,v)$ of $\eqref{1-1}$ satisfies
 \begin{equation*}\label{sdrn}
 \|u-u_*\|_{L^\infty}+\|v-v_*\|_{L^\infty} \to 0 \ \text{as} \ t \to \infty
 \end{equation*}
where the convergence is exponential when $\alpha>0$.
\end{lemma}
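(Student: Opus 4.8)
The plan is to differentiate the prescribed Lyapunov functional $V_2(t)$ along the trajectory, show $\frac{d}{dt}V_2(t)\le 0$ with equality characterizing the coexistence state, and then combine LaSalle's invariance principle with the uniform gradient bounds of Lemma \ref{L4u} and \eqref{L3-2} to upgrade the resulting convergence to $L^\infty$. First I would compute, using \eqref{1-1} and integration by parts under the Neumann condition,
$$\frac{d}{dt}V_2(t)=\frac{1}{\gamma}\int_\Omega\Big(1-\frac{u_*}{u}\Big)u_t\,dx+\int_\Omega\frac{F(v)-F(v_*)}{F(v)}v_t\,dx,$$
splitting each integrand into a transport/diffusion part and a kinetic part.

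For the kinetic part I would substitute the defining relations $u_*=\phi(v_*)=f(v_*)/F(v_*)$ and $\gamma F(v_*)=\theta+\alpha u_*$ from \eqref{ss2}. Writing $\gamma F(v)-\theta-\alpha u=\gamma(F(v)-F(v_*))-\alpha(u-u_*)$ and $\phi(v)-u=(\phi(v)-\phi(v_*))-(u-u_*)$, the two cross terms $\pm\int_\Omega(u-u_*)(F(v)-F(v_*))\,dx$ cancel and the kinetic contribution collapses to
$$-\frac{\alpha}{\gamma}\int_\Omega(u-u_*)^2\,dx+\int_\Omega(\phi(v)-\phi(v_*))(F(v)-F(v_*))\,dx,$$
which is nonpositive because (H2) and (H4) force $F$ increasing and $\phi$ decreasing, so the product has a fixed sign. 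The transport/diffusion part, after integrating by parts and using $\nabla(u_*/u)=-(u_*/u^2)\nabla u$ and $\nabla(F(v_*)/F(v))=-F(v_*)F'(v)/F(v)^2\,\nabla v$, becomes the pointwise quadratic form
$$-\int_\Omega\Big[\frac{u_*d(v)}{\gamma u^2}|\nabla u|^2-\frac{u_*\chi(v)}{\gamma u}\nabla u\cdot\nabla v+\frac{DF(v_*)F'(v)}{F(v)^2}|\nabla v|^2\Big]dx.$$
This is negative semidefinite precisely when the discriminant is nonpositive, i.e. $\big(\tfrac{u_*\chi(v)}{\gamma u}\big)^2\le 4\,\tfrac{u_*d(v)}{\gamma u^2}\cdot\tfrac{DF(v_*)F'(v)}{F(v)^2}$; after the factors $u^{-2}$ cancel this is exactly the hypothesis $D\ge\max_{0\le v\le K_0}\frac{u_*|F(v)|^2|\chi(v)|^2}{4\gamma F(v_*)F'(v)d(v)}$. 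Hence $\frac{d}{dt}V_2(t)\le 0$, and equality forces $v\equiv v_*$ (and $u\equiv u_*$ when $\alpha>0$); moreover, inserting $v\equiv v_*$ into the second equation of \eqref{1-1} on the invariant set already yields $u\equiv u_*$, so the only invariant subset of $\{\frac{d}{dt}V_2=0\}$ is $\{(u_*,v_*)\}$.

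Since $V_2$ is bounded below (by Lemma \ref{blemma} and convexity of $s\mapsto s-u_*-u_*\ln(s/u_*)$) and nonincreasing, LaSalle's invariance principle then gives $\|u-u_*\|_{L^2}+\|v-v_*\|_{L^2}\to 0$. To pass to $L^\infty$ I would interpolate exactly as in \eqref{ar1}--\eqref{ar2}: the Gagliardo--Nirenberg inequality together with the uniform bounds $\|\nabla u\|_{L^4}\le C$ (Lemma \ref{L4u}) and $\|\nabla v\|_{L^4}\le C$ (from \eqref{L3-2}) yields $\|u-u_*\|_{L^\infty}\lesssim\|u-u_*\|_{L^2}^{1/3}+\|u-u_*\|_{L^2}$ and likewise for $v$, so the $L^\infty$ convergence follows.

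For the exponential rate when $\alpha>0$ I would make the dissipation coercive: once $t$ is large enough that $v$ lies in a fixed neighborhood of $v_*$, the strict signs $\phi'<0$, $F'>0$ give $\int_\Omega(\phi(v)-\phi(v_*))(F(v)-F(v_*))\,dx\le -c\|v-v_*\|_{L^2}^2$, while the $\alpha$-term controls $\|u-u_*\|_{L^2}^2$; combined with an upper estimate $V_2\le C(\|u-u_*\|_{L^2}^2+\|v-v_*\|_{L^2}^2)$ valid there (using that $u$ is bounded above and, since $u\to u_*>0$, eventually bounded below), this produces the differential inequality $\frac{d}{dt}V_2\le -cV_2$ and hence exponential decay of the $L^2$ deviations, which the interpolation above converts into exponential $L^\infty$ decay. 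The main obstacle I anticipate is this last step: the LaSalle argument only delivers qualitative convergence, so extracting a quantitative rate requires the two-sided comparison between $V_2$ and the squared $L^2$ norms, and in particular a uniform positive lower bound on $u$ for large time, which must be deduced from the already-established convergence before the differential inequality can be closed.
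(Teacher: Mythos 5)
Your proposal is correct and follows essentially the same route as the paper: the same Lyapunov functional, the same quadratic-form/Sylvester (discriminant) condition on $D$ for the gradient terms, the sign structure of the kinetic terms via (H2)/(H4), LaSalle's invariance principle, and the Gagliardo--Nirenberg interpolation to upgrade to $L^\infty$. The only difference is that the paper outsources the kinetic-term cancellation and the coercivity argument for the exponential rate (including the eventual lower bound on $u$) to \cite[Lemma 4.3]{JW-JDE-2017}, whereas you write these steps out explicitly.
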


\begin{proof}
First by the same argument as the proof of  \cite[Lemma 4.3]{JW-JDE-2017}, we have that $V_2(t)\geq 0$ for all $u,v \geq 0$.
Next we differentiate $V_2(t)$ with respect to $t$ and use the equations of (\ref{1-1}) to obtain that
\begin{equation*}\label{S3}
\begin{split}
\frac{d}{dt}V_2(t)
&=\frac{1}{\gamma}\int_\Omega \bigg(1-\frac{u_*}{u}\bigg)u_tdx+\int_\Omega \frac{F(v)-F(v_*)}{F(v)}v_tdx\\
&=\underbrace{-\frac{u_*}{\gamma}\int_\Omega \frac{d(v)|\nabla u|^2}{u^2}dx-{\color{black}D F(v_*) \int_\Omega F'(v)\bigg|\frac{\nabla v}{F(v)}\bigg|^2dx}+\frac{ u_*}{\gamma}\int_\Omega \frac{\chi (v)\nabla u \cdot\nabla v}{u}dx}_{I_1}\\
& \ \ \ \ +\underbrace{{\color{black}\frac{1}{\gamma}\int_\Omega \bigg(1-\frac{u_*}{u}\bigg)(\gamma uF(v)-\theta u-\alpha u^2)dx+\int_{\Omega} (F(v)-F(v_*))\bigg(\frac{f(v)}{F(v)}-u\bigg)}dx}_{I_2}.
\end{split}
\end{equation*}
$I_1$ can be rewritten as
$${I_1=-\int_\Omega\Theta^T A \Theta},\ \Theta=\begin{bmatrix}\nabla u\\[1mm]\nabla v \end{bmatrix}, \ \ A=\begin{bmatrix}\frac{u_* d(v)}{\gamma u^2} & -\frac{\chi(v) u_*}{2 \gamma u} \\[1mm]-\frac{\chi(v) u_*}{2\gamma  u} &\frac{D F(v_*)F'(v)}{|F(v)|^2} \end{bmatrix}.$$
where $\Theta^T$ denotes the transpose of $\Theta$. Then it can be easily checked with Sylvester's criterion that the matrix $A$ is non-negative definite (and hence $I_1\leq 0$) if and only if
\begin{equation*}\label{con}
\frac{D F(v_*)F'(v)u_* d(v)}{\gamma |F(v)|^2 u^2} \geq \frac{|\chi(v)|^2 u_*^2}{4 \gamma^2 u^2}\ \ \mathrm{or}\ \ D \geq{\color{black} \frac{ u_* |F(v)|^2|\chi(v)|^2}{4 \gamma F(v_*)F'(v)d(v)}},
\end{equation*}
where $u_*$ and $v_*$  do not depend on $D$, see (\ref{ss2}).  Note that the above $I_2$ is exactly the same as the $I_2$ in the proof of \cite[Lemma 4.3]{JW-JDE-2017}. Hence we can follow the same procedure for the proof of \cite[Lemma 4.3]{JW-JDE-2017} to show the {\color{black} homogeneous} coexistence state $(u_*,v_*)$ is globally asymptotically stable and satisfies
\begin{equation}\label{L2d}
\|u-u_*\|_{L^2}+\|v- v_*\|_{L^2}\leq c_1e^{-c_2 t}\  \mathrm{for}\  t>t_0
\end{equation}
with some $t_0>1$ for $\alpha>0$. Using the Gagliardo-Nirenberg inequality and the boundedness of $\|\nabla u\|_{L^4}$ and $\|\nabla v\|_{L^\infty}$ (see Lemma \ref{L9} and Lemma \ref{L4u}), we use a similar procedure as for \eqref{ar1}-\eqref{ar2} to finally obtain the exponential decay rate in $L^\infty$-norm from \eqref{L2d}.
\end{proof}

\subsection{Proof of Theorem \ref{WP}}
Theorem \ref{WP} is directly obtained from Lemma \ref{wp1} and Lemma \ref{wp2}.

\section{Applications and spatio-temporal patterns}
The first purpose of this section is to apply our general results obtained in Theorem \ref{GB} and Theorem \ref{WP} to two most widely used predator-prey interactions: Lotka-Volterra type (i.e. $F(v)=v$) and Rosenzweig-MacArthur type (i.e. $F(v)=\frac{v}{\lambda +v}$ and $\alpha=0$) \cite{RM-1963}. Note that these results only give the global existence of solutions (by Theorem \ref{GB}) and global stability of constant steady states (by Theorem \ref{WP}), the distribution of the predator and the prey in space and the time-asymptotic dynamics of the population outside the parameter regimes found in Theorem \ref{WP} are unclear, but indeed they are more interesting from the application point of view in ecology though hard to study analytically. Hence the second purpose of this section is to numerically exploit the spatio-temporal patterns generated by the Lotka-Volterra and Rosenzweig-MacArthur predator-prey systems, which not only display the distribution patterns of the predator and the prey in space or evolution of population in time, but also provide useful sources for the future research.
\subsection{Application of our results}
In this subsection, we shall give some examples to illustrate  the applications of Theorem \ref{GB} and Theorem \ref{WP}.  The first example is the  Lotka-Volterra predator-prey system \cite{Lotka} with prey-taxis
\begin{equation}\label{LV}
\begin{cases}
u_t=\nabla\cdot(d(v) \nabla u)-\nabla \cdot(u\chi(v)\nabla v)+\gamma u v-\theta u-\alpha u^2, &x\in \Omega, ~~t>0,\\
 v_t=D\Delta v- uv+ \mu v(1-\frac{v}{K}),& x\in \Omega, ~~t>0,\\
 \frac{\partial u}{\partial\nu}=\frac{\partial v}{\partial\nu}=0,& x\in \partial\Omega, ~~t>0,\\
  u(x,0)=u_0(x),v(x,0)=v_0(x),&x\in\Omega.\\
 \end{cases}
\end{equation}
Then the application of the results in Theorem \ref{GB} and Theorem \ref{WP} yield the following results on the  system (\ref{LV}).
\begin{proposition}
Let $\Omega\subset \R^2$ be a bounded domain with smooth boundary and assume $(u_0,v_0)\in [W^{1,p}(\Omega)]^2$ with
$u_0, v_0 \geq 0(\not \equiv 0)$ and $p>2$. {\color{black}Assume $\alpha>0$,} then the problem (\ref{LV}) has a unique global classical solution $(u,v) \in [C([0,\infty)\times\bar{\Omega}) \cap C^{2,1}((0,\infty)\times\bar{\Omega})]^2$ such that:
\begin{itemize}
\item {\color{black}If $\gamma K\leq \theta$}, then
\begin{equation*}
\|u\|_{L^\infty}+\|v-K\|_{L^\infty} \to 0 \ \text{as} \ t \to \infty
\end{equation*}
where the convergence is exponential if $\gamma K<\theta$ and algebraic if $\gamma K=\theta$.
\item If $\gamma K>\theta$ and
\begin{equation*}\label{LV-1}
\frac{4D\gamma K(\mu\alpha+\theta)}{\mu (\gamma K-\theta)K_0^2} \geq M_0,
\end{equation*}
with $M_0=\max\limits_{0\leq v\leq K_0}\frac{|\chi(v)|^2}{d(v)}$, then
\begin{equation*}
\|u-u^*\|_{L^\infty}+\|v-v^*\|_{L^\infty} \to 0 \ \text{exponentially} \ \text{as} \ t \to \infty
\end{equation*}
where
\begin{equation}\label{coess}
(u_*,v_*)=\left(\frac{\mu(\gamma K-\theta)}{\gamma K+\mu\alpha}, \frac{K(\mu\alpha+\theta)}{\gamma K+\mu\alpha}\right ).
\end{equation}
\end{itemize}
\end{proposition}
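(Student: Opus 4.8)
The plan is to recognize that system (\ref{LV}) is exactly the special case of (\ref{1-1}) with $F(v)=v$ and $f(v)=\mu v(1-v/K)$, so that the whole proposition follows by verifying hypotheses (H1)--(H4) and then reading off the conclusions of Theorem \ref{GB} and Theorem \ref{WP}. First I would check the structural assumptions. With $F(v)=v$ one has $F(0)=0$, $F(v)>0$ on $(0,\infty)$ and $F'(v)=1>0$, so (H2) holds. With $f(v)=\mu v(1-v/K)$ one checks $f(0)=0$, $f(K)=0$, $f(v)<0$ for $v>K$, and $f(v)\le\mu v$, so (H3) holds with the given constants $\mu,K$. The motility conditions (H1) are part of the standing hypotheses on $d,\chi$. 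For (H4) I would compute the compound function $\phi(v)=f(v)/F(v)=\mu(1-v/K)$, which is continuously differentiable with $\phi(0)=\mu>0$ and $\phi'(v)=-\mu/K<0$, so (H4) is satisfied.

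With (H1)--(H4) in force and $\alpha>0$, the global existence and uniform boundedness of $(u,v)$ follow directly from Theorem \ref{GB}. Since $\gamma F(K)=\gamma K$, the dichotomy of Theorem \ref{WP} translates verbatim: the prey-only state $(0,K)$ is globally asymptotically stable whenever $\gamma K\le\theta$, with exponential rate if $\gamma K<\theta$ and algebraic rate if $\gamma K=\theta$ (admissible because $\alpha>0$). This settles the first bullet.

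For the coexistence regime $\gamma K>\theta$ I would first solve (\ref{ss2}) explicitly: from $u_*=\mu(1-v_*/K)$ and $\gamma v_*=\theta+\alpha u_*$, eliminating $u_*$ gives $v_*(\gamma K+\mu\alpha)=K(\theta+\mu\alpha)$, which yields the formulas in (\ref{coess}). The remaining task is to show the stated inequality is a sufficient version of the stability condition of Theorem \ref{WP}(2). Substituting $F(v)=v$, $F'(v)=1$ and $F(v_*)=v_*$, that condition reads $D\ge\max_{0\le v\le K_0}\frac{u_*\,v^2\,|\chi(v)|^2}{4\gamma v_*\,d(v)}$. Using $v^2\le K_0^2$ on $[0,K_0]$, I would bound the right-hand side by $\frac{u_*K_0^2}{4\gamma v_*}M_0$ with $M_0=\max_{0\le v\le K_0}|\chi(v)|^2/d(v)$, so that $\frac{4D\gamma v_*}{u_*K_0^2}\ge M_0$ is sufficient; inserting the ratio $v_*/u_*=K(\mu\alpha+\theta)/[\mu(\gamma K-\theta)]$ from (\ref{coess}) reproduces exactly the displayed inequality, whereupon Theorem \ref{WP}(2) delivers exponential convergence of $(u,v)$ to $(u_*,v_*)$ in $L^\infty$ (exponential since $\alpha>0$).

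The only genuine subtlety, rather than an obstacle, is this final reduction: one must bound $v^2$ by $K_0^2$ to factor the maximum of $|\chi(v)|^2/d(v)$ out of the $v$-dependent integrand, thereby replacing the sharp but implicit condition of Theorem \ref{WP} by the cleaner, slightly stronger, sufficient condition stated in the proposition. Everything else is direct verification of the hypotheses and substitution of the explicit steady-state formulas.
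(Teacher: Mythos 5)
Your proposal is correct and follows exactly the paper's route: the paper presents this proposition as a direct application of Theorem \ref{GB} and Theorem \ref{WP}, which is precisely what you do, and your verification of (H1)--(H4), the explicit solution of (\ref{ss2}), and the reduction of the stability condition via $v^2\le K_0^2$ to the stated inequality with $M_0$ are all accurate. The only difference is that you spell out the details (in particular the sufficiency, not equivalence, of the displayed condition) that the paper leaves implicit.
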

The second example is the Rosenzweig-MacArthur type predator-prey interaction. From Theorem \ref{GB} with $\alpha=0$, we only have the results for the special case $\chi(v)=-d'(v)$ (density suppressed motility) which leads to the following system
\begin{equation}\label{LV-2}
\begin{cases}
u_t=\Delta(d(v)u)+ \frac{\gamma u v}{\lambda+v}-\theta u, &x\in \Omega, ~~t>0,\\
 v_t=D\Delta v- \frac{uv}{\lambda+v}+ \mu v(1-\frac{v}{K}),& x\in \Omega, ~~t>0,\\
 \frac{\partial u}{\partial\nu}=\frac{\partial v}{\partial\nu}=0,& x\in \partial\Omega, ~~t>0,\\
  u(x,0)=u_0(x),v(x,0)=v_0(x),&x\in\Omega.\\
 \end{cases}
\end{equation}
In this case, the hypothesis (H4) is satisfied by requiring $\lambda>K$.
Then the interpretation of our results of Theorem \ref{GB} and Theorem \ref{WP} into \eqref{LV-2} yields the following results.
\begin{proposition}
Assume $(u_0,v_0)\in [W^{1,p}(\Omega)]^2$ with
$u_0, v_0 \geq 0(\not \equiv 0)$ and $p>2$. If $\lambda>K$,  then the problem (\ref{LV-2}) has a unique global classical solution in $\Omega\subset \R^2$ such that
\begin{itemize}
\item If $ \frac{\gamma K}{\lambda+K}< \theta$, then
\begin{equation*}
\|u\|_{L^\infty}+\|v-K\|_{L^\infty} \to 0 \ \text{exponentially} \ \text{as} \ t \to \infty.
\end{equation*}

\item If $\frac{\gamma K}{\lambda+K}> \theta$ and
$
\frac{4D\theta\lambda}{K_0^2 u_*} \geq M_1
$
with $M_1=\max\limits_{0\leq v\leq K_0}\frac{|d'(v)|^2}{d(v)}$, it follows that
\begin{equation*}
\|u-u^*\|_{L^\infty}+\|v-v^*\|_{L^\infty} \to 0 \ \text{as} \ t \to \infty
\end{equation*}
where
$$
(u_*,v_*)=\left(\frac{\gamma \lambda \mu [(\gamma-\theta )K-\theta\lambda]}{(\gamma-\theta )^2K},\frac{\theta \lambda}{\gamma-\theta}\right).
$$
\end{itemize}
\end{proposition}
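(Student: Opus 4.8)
The plan is to read \eqref{LV-2} as the special case of the general system \eqref{1-1} with Holling type II response $F(v)=\frac{v}{\lambda+v}$, logistic prey kinetics $f(v)=\mu v\bigl(1-\frac{v}{K}\bigr)$, no predator self-limitation $\alpha=0$, and the density-suppressed motility relation $\chi(v)=-d'(v)$, and then to invoke Theorems \ref{GB} and \ref{WP} directly once their hypotheses are verified and their abstract conditions are rewritten in the explicit forms asserted here.

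First I would verify the structural hypotheses (H1)--(H4). Hypothesis (H1) holds since $\chi(v)=-d'(v)\ge 0$ is forced by $d'(v)\le 0$; (H2) is immediate because $F(0)=0$ and $F'(v)=\frac{\lambda}{(\lambda+v)^2}>0$; and (H3) follows from $f(v)=\mu v-\frac{\mu}{K}v^2\le \mu v$, $f(K)=0$, and $f(v)<0$ for $v>K$, so that $K$ plays the role of the carrying capacity. The decisive check is (H4): a direct computation gives
\[
\phi(v)=\frac{f(v)}{F(v)}=\mu(\lambda+v)\Bigl(1-\frac{v}{K}\Bigr),
\]
so that $\phi(0)=\mu\lambda>0$ and $\phi'(v)=\mu\bigl(1-\frac{\lambda}{K}-\frac{2v}{K}\bigr)$, whose maximum over $v\ge 0$ is attained at $v=0$ with value $\mu\bigl(1-\frac{\lambda}{K}\bigr)$. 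Hence $\phi'(v)<0$ for all $v\ge 0$ precisely when $\lambda>K$, which is exactly the standing assumption of the proposition. Since $\alpha=0$, the existence part of Theorem \ref{GB} applies because $\chi(v)=-d'(v)$, yielding the global classical solution.

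For the asymptotics I would compute the threshold $\gamma F(K)=\frac{\gamma K}{\lambda+K}$ and solve \eqref{ss2}. With $\alpha=0$ the equation $\gamma F(v_*)=\theta$ gives $v_*=\frac{\theta\lambda}{\gamma-\theta}$, and substituting into $u_*=\phi(v_*)$ using $\lambda+v_*=\frac{\lambda\gamma}{\gamma-\theta}$ and $1-\frac{v_*}{K}=\frac{(\gamma-\theta)K-\theta\lambda}{K(\gamma-\theta)}$ produces the stated $(u_*,v_*)$. When $\frac{\gamma K}{\lambda+K}<\theta$, Theorem \ref{WP}(1) applies, and because $\alpha=0$ rules out the borderline equality case, the convergence to the prey-only state $(0,K)$ is exponential.

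The main work, and the step to perform carefully, is translating the stability criterion of Theorem \ref{WP}(2) into the clean form claimed. Inserting $|\chi(v)|^2=|d'(v)|^2$, $F(v)=\frac{v}{\lambda+v}$, $F'(v)=\frac{\lambda}{(\lambda+v)^2}$, and $\gamma F(v_*)=\theta$, the factors $(\lambda+v)^2$ cancel and the abstract bound collapses to
\[
D\ge \max_{0\le v\le K_0}\frac{u_*\,v^2\,|d'(v)|^2}{4\theta\lambda\,d(v)}.
\]
Estimating $v^2\le K_0^2$ then decouples the $v$-dependence and gives the upper bound $\frac{u_*K_0^2}{4\theta\lambda}\max_{0\le v\le K_0}\frac{|d'(v)|^2}{d(v)}=\frac{u_*K_0^2M_1}{4\theta\lambda}$, so the hypothesis $\frac{4D\theta\lambda}{K_0^2u_*}\ge M_1$ indeed implies the condition required by Theorem \ref{WP}(2), and convergence to $(u_*,v_*)$ follows. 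The only delicate points are carrying out the cancellation of the $(\lambda+v)$ factors correctly and recognizing that the estimate $v^2\le K_0^2$ is the sole place where sharpness is traded for a closed-form criterion.
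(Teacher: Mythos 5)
Your proposal is correct and follows essentially the same route as the paper: the paper presents this proposition as a direct interpretation of Theorems \ref{GB} and \ref{WP} for the choices $F(v)=\frac{v}{\lambda+v}$, $f(v)=\mu v(1-\frac{v}{K})$, $\alpha=0$, $\chi(v)=-d'(v)$, with (H4) reducing to $\lambda>K$ exactly as you computed via $\phi(v)=\mu(\lambda+v)(1-\frac{v}{K})$. Your translation of the stability criterion — cancelling the $(\lambda+v)^2$ factors using $\gamma F(v_*)=\theta$ and then bounding $v^2\leq K_0^2$ to reach $\frac{4D\theta\lambda}{K_0^2 u_*}\geq M_1$ — is precisely the (unwritten) calculation behind the paper's statement.
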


\subsection{Linear instability analysis}
In this section, we will study the possible pattern formation generated by the system \eqref{1-1} {\color{black} under the assumptions (H1)-(H3).} We begin with the space-absent ODE system of (\ref{1-1}):
\begin{equation}\label{ODE}
\begin{cases}
u_t=\gamma uF(v)-\theta u-\alpha u^2, \\
 v_t=- u F(v)+ f(v),\\
  \end{cases}
\end{equation}
which has three possible equilibria: $(0,0), (0,K)$ and $(u_*, v_*)$.
One can easily check that the steady state $(0,0)$ is linearly unstable and $(0,K)$ is linearly stable for both Lotka-Volterra and Rosenzweig-MacArthur type predator-prey interactions. For the {\color{black} homogeneous} coexistence steady state $(u_*, v_*)$, it is linearly stable for the Lotka-Volterra type interaction. While for the Rosenzweig-MacArthur type interaction, one can easily find that all the eigenvalues of the linearised system of (\ref{ODE}) at $(u_*, v_*)$ have negative real part  (hence  $(u_*, v_*)$ is stable)  if $v_*>\frac{K-\lambda}{2}$, and are complex with zero real part if $v_*=\frac{K-\lambda}{2}$, and  are complex with  positive real part (hence $(u_*, v_*)$ is unstable) if $0<v_*<\frac{K-\lambda}{2}$.

Next we proceed to consider the stability of equilibria $(0,K)$ and  $(u_*, v_*)$ in the presence of spatial structure. To this end,
we first linearize the  system \eqref{1-1} at an equilibrium  $(u_s,v_s)$ and write the linearized system as
\begin{equation}\label{PF-1}
\begin{cases}
\Phi_t=\mathcal{A}\Delta \Phi+\mathcal{B}\Phi, &x\in\Omega, \ t>0,\\
(\nu\cdot \nabla)\Phi=0,&x\in\partial\Omega, \ t>0,\\
\Phi(x,0)=(u_0-u_s,v_0-v_s)^{\mathcal{T}},&x\in \Omega,
\end{cases}
\end{equation}
where $\mathcal{T}$ denotes the transpose and
       \begin{equation*}
        \Phi=\left(
         \begin{array}{c}
           u-u_s\\
           v-v_s\\
         \end{array}
       \right),
       \ \ \ \  \mathcal{A}=\left(
       \begin{array}{cc}
     d(v_s) & -u_s\chi(v_s)\ \\
      0 & D \\ \end{array}
       \right)
     \end{equation*}
     as well as
     \begin{equation*}
     \mathcal{B}=\left(
       \begin{array}{cc}
       -\alpha u_s & \gamma u_s F'(v_s)\\
         -F(v_s) &   -u_s F'(v_s)+f'(v_s) \\
       \end{array}
       \right)=\left(
       \begin{array}{cc}
       B_1 & B_2\\
         B_3 &  B_4 \\
       \end{array}
       \right).
     \end{equation*}
 Let $W_k(x)$ denote the eigenfunction of the following eigenvalue problem:
 \begin{equation*}\label{hn}
 \Delta W_k(x)+k^2 W_k(x)=0,  \ \  \frac{\partial W_k(x)}{\partial \nu}=0,
 \end{equation*}
 where  $k$ is called the wavenumber. Since the system \eqref{PF-1} is linear, the solution $\Phi(x,t)$ has the form of
 \begin{equation}\label{PF-2}
 \Phi(x,t)=\sum\limits_{k\geq 0} c_k e^{\rho t} W_k(x)
 \end{equation}
 where the constants $c_k$ are determined by the Fourier expansion of the initial conditions in terms of $W_k(x)$ and $\rho$ is the temporal eigenvalue.
 Substituting \eqref{PF-2} into \eqref{PF-1}, one has
 \begin{equation*}
 \rho W_k(x)=-k^2 AW_k(x)+BW_k(x),
 \end{equation*}
 which implies $\rho$ is the eigenvalue of the following matrix
 \begin{equation*}
 \begin{split}
 M_k
 &=\left(
       \begin{array}{cc}
      -d(v_s)k^2-\alpha u_s & u_s\chi (v_s)k^2+\gamma u_s F'(v_s)\ \\
   -F(v_s) & -Dk^2 -u_s F'(v_s)+f'(v_s)\\ \end{array}
       \right)\\
       &=\left(
       \begin{array}{cc}
      -d(v_s)k^2+B_1 & u_s\chi (v_s)k^2+B_2\ \\
   B_3 & -Dk^2+B_4\\ \end{array}
       \right).
       \end{split}
 \end{equation*}
Calculating the eigenvalue of matrix $M_k$, we get the eigenvalues $\rho(k^2)$ as functions of the wavenumber $k$ as the roots of
  \begin{equation*}\label{LP}
      \rho^2+a(D,k^2)\rho+b(D,k^2)=0,
     \end{equation*}
     where
     \begin{equation*}\label{LP-1}
     \begin{split}
     a(D,k^2)&=(d(v_s)+D) k^2+(\alpha+F'(v_s)) u_s-f'(v_s)\\
               &=(d(v_s)+D) k^2-(B_1+B_4)
               \end{split}
     \end{equation*}
     and
     \begin{equation*}
     \ \ b(D,k^2)=d(v_s)Dk^4-(d(v_s)B_4+u_s\chi(v_s)B_3+B_1D)k^2+B_1B_4-B_2B_3.
        \end{equation*}
Then it can be easily verified that the eigenvalue $\rho$ for the prey-only steady state $(0,K)$ has negative real part for both Lotka-Volterra and Rosenzweig-MacArthur type predator-prey interaction and hence $(0,K)$ is linearly stable. Thus the pattern (if any) can only arise from the {\color{black} homogeneous} coexistence steady state $(u_*, v_*)$. In the following results,  we first show that the Lotka-Volterra type predator-prey system \eqref{LV} indeed has no pattern bifurcated from $(u_*, v_*)$.
\begin{lemma}
The {\color{black} homogeneous} coexistence steady state $(u_*,v_*)$ of system \eqref{LV} is linearly stable if
$\gamma K>\theta$.
\end{lemma}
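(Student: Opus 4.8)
The plan is to apply the Routh--Hurwitz criterion to the characteristic polynomial $\rho^2+a(D,k^2)\rho+b(D,k^2)=0$ derived above, specialized to the Lotka--Volterra kinetics $F(v)=v$ and $f(v)=\mu v(1-\tfrac{v}{K})$. Linear stability of $(u_*,v_*)$ against all wavenumbers is equivalent to requiring $a(D,k^2)>0$ and $b(D,k^2)>0$ for every $k\ge 0$. First I would record the linearization entries at $(u_*,v_*)$, namely $B_1=-\alpha u_*$, $B_2=\gamma u_*$, $B_3=-v_*$ and $B_4=-u_*+f'(v_*)$, and then exploit the two algebraic identities defining the coexistence state, $\gamma v_*=\theta+\alpha u_*$ and $u_*=\mu(1-\tfrac{v_*}{K})$, to simplify them.

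The decisive computation is the sign of $B_4$. Substituting $f'(v_*)=\mu(1-\tfrac{2v_*}{K})$ together with $u_*=\mu(1-\tfrac{v_*}{K})$ yields the clean cancellation $B_4=-\tfrac{\mu v_*}{K}<0$. Since also $B_1=-\alpha u_*\le 0$ (here the hypothesis $\gamma K>\theta$ guarantees $u_*>0$), the trace satisfies $B_1+B_4=-\alpha u_*-\tfrac{\mu v_*}{K}<0$. Consequently $a(D,k^2)=(d(v_*)+D)k^2-(B_1+B_4)>0$ for all $k\ge 0$, which disposes of the first Routh--Hurwitz condition.

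For the second condition I would view $b$ as a quadratic in the variable $s=k^2$ and check that each of its three coefficients is positive. The leading coefficient is $d(v_*)D>0$; the constant term equals $B_1B_4-B_2B_3=u_*v_*\big(\gamma+\tfrac{\alpha\mu}{K}\big)>0$; and the middle coefficient is $-(d(v_*)B_4+u_*\chi(v_*)B_3+B_1D)$, in which $d(v_*)B_4=-d(v_*)\tfrac{\mu v_*}{K}<0$, $u_*\chi(v_*)B_3=-u_*v_*\chi(v_*)\le 0$ (using $\chi\ge 0$), and $B_1D=-\alpha u_*D\le 0$, so the bracket is negative and the middle coefficient is positive. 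Hence $b(D,k^2)>0$ for every $k\ge 0$, and the Routh--Hurwitz criterion gives $\mathrm{Re}\,\rho<0$ for all wavenumbers, i.e.\ linear stability.

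The only place requiring care is the middle coefficient of $b$: its positivity relies on all three summands $d(v_*)B_4$, $u_*\chi(v_*)B_3$ and $B_1D$ being non-positive, which is exactly where the structural hypotheses $d>0$, $\chi\ge 0$ and $B_3=-v_*<0$ enter. I expect no genuine obstacle beyond this sign bookkeeping; indeed, the outcome that all coefficients of $b$ in $k^2$ are strictly positive is precisely what precludes a Turing-type sign change, consistent with the earlier observation that the Lotka--Volterra system produces no pattern bifurcating from $(u_*,v_*)$.
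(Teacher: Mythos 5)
Your proposal is correct and follows essentially the same route as the paper: compute the signs of the linearization entries (in particular $B_4=-\tfrac{\mu}{K}v_*<0$), deduce $a(D,k^2)>0$ and $b(D,k^2)>0$ for all $k\geq 0$ from the sign of the trace, the determinant $B_1B_4-B_2B_3$, and the middle coefficient $-(d(v_*)B_4+u_*\chi(v_*)B_3+B_1D)$, and conclude by Routh--Hurwitz. Your explicit use of the steady-state identities to derive $B_4=-\tfrac{\mu v_*}{K}$ merely fills in a step the paper labels ``easily checked.''
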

\begin{proof} Since $F(v)=v$ and $f(v)=\mu  v(1-v/K)$, we can easily check that
\begin{equation*}
B_1=-\alpha u_*<0, B_2=\gamma u_*>0, B_3=-v_*<0, B_4=-\frac{\mu}{K} v_*<0.
\end{equation*}
Hence $B_1+B_4<0$ and $B_1B_4-B_2B_3>0$ as well as
\begin{equation*}
d(v_*)B_4+u_*\chi(v_*)B_3+B_1D<0,
\end{equation*}
which imply $a(D,k^2)>0$ and $b(D,k^2)>0$. Then the corresponding characteristic equation only has eigenvalue with negative real part. Hence $(u_*,v_*)$ is linearly stable for all  $\gamma K>\theta$.
\end{proof}
Therefore we are left to consider the possibility of the patterns bifurcated from $(u_*, v_*)$ for the Rosenzweig-MacArthur type predator-prey  system \eqref{LV-2} {\color{black}{ and hence hereafter $\alpha=0$}}. In this case, the corresponding characteristic equation is
\begin{equation}\label{LP}
      \rho^2+a(D,k^2)\rho+b(D,k^2)=0,
     \end{equation}
with
\begin{equation}\label{coee}
a(D,k^2)=(d(v_*)+D)  k^2-\beta_1,\  b(D,k^2)=d(v_*)Dk^4-
   \beta_2 k^2   +\beta_3.
\end{equation}
where
\begin{eqnarray}\label{beta}
\begin{cases}
\beta_1=\D\frac{\mu v_*(K-\lambda-2v_*)}{K(\lambda+v_*)},\\[2mm]
\beta_2=\D\frac{\mu v_*(K-\lambda-2v_*)d(v_*)}{K(\lambda+v_*)}
-{\color{black}\frac{\mu v_*(K-v_*)\chi(v_*)}{K}},\\[2mm]
\beta_3=\D\frac{\lambda\theta\mu(K-v_*)}{K(\lambda+v_*)}.
\end{cases}
\end{eqnarray}
If $K>v_*> \frac{K-\lambda}{2}$, one has $K-\lambda-2v_*<0$ which yields $a(D,k^2)>0$ and $b(D,k^2)>0$ for all  $k\geq 0$.
Hence all the eigenvalues have negative real parts and the  {\color{black} homogeneous} coexistence steady state $(u_*,v_*)$ is linearly stable.
This implies that the pattern formation is possible only when
\begin{equation}\label{star}
0<v_*\leq \frac{K-\lambda}{2}.
\end{equation}
{\color{black}First, if $v_*=\frac{K-\lambda}{2}$,  one has $a(D,0)=0$ and $b(D,0)=\beta_3>0$, which corresponds to the
spatially homogeneous periodic solutions. Moreover, for any $k>0$, one has $a(D,k)>0$ and $b(D,k)>0$, which indicates that all the eigenvalues of \eqref{LP} have negative real part and hence no spatially inhomogeneous patterns arise in this case. Next, we shall exploit whether the spatially inhomogeneous  patterns may arise in the case $0<v_*< \frac{K-\lambda}{2}$.
}

Define the set
\begin{equation*}\label{CS}
\mathcal{H}=\{(D,\eta)\in \R^2_+:a(D,\eta)=0  \}
\end{equation*}
as the Hopf bifurcation curve, and
\begin{equation*}\label{CH}
\mathcal{S}=\{(D,\eta)\in \R^2_+:b(D,\eta)=0  \}
\end{equation*}
as the steady state bifurcation curve, where the linearized system around the {\color{black} homogeneous} coexistence
steady state has an eigenvalue with zero real part on the curves $\mathcal {S}$ or $\mathcal{H}$. Furthermore if we define
\begin{equation}\label{DH}
D_{\mathcal{H}}(\eta)=\frac{\mu v_*(K-\lambda-2v_*)}{K(\lambda+v_*)\eta}-d(v_*)
\end{equation}
and
\begin{equation}\label{DS}
D_{\mathcal{S}}(\eta)=\left(\frac{\mu v_*(K-\lambda-2v_*)}{K(\lambda+v_*)}
-{\color{black}\frac{\mu v_*(K-v_*)\chi(v_*)}{d(v_*)K}}\right)\frac{1}{\eta}   -\frac{\lambda\theta\mu(K-v_*)}{Kd(v_*)(\lambda+v_*)}\frac{1}{\eta^2},
\end{equation}
then we have the following stability results for the coexistence steady state $(u_*.v_*)$.
\begin{lemma}
 Assume the parameters $\theta,\gamma,\mu,\lambda$ and $K>\lambda$ are fixed.
 Let $D_{\mathcal{H}}(\eta)$ and $D_{\mathcal{S}}(\eta)$ be defined in \eqref{DH} and \eqref{DS}. Then the {\color{black} homogeneous} coexistence steady state is locally asymptotically stable provided the parameter $D$ satisfies
 \begin{equation}\label{ss-1}
 D>\max_{\eta\geq 0}\{D_{\mathcal{H}}(\eta),D_{\mathcal{S}}(\eta)\}.
 \end{equation}
\end{lemma}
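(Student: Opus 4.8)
The plan is to characterize linear stability of the homogeneous coexistence state $(u_*,v_*)$ through the signs of the coefficients of the quadratic \eqref{LP} and then to check that the hypothesis \eqref{ss-1} forces these signs to be favorable for every wavenumber. By \eqref{PF-2} the perturbation decomposes into Neumann eigenmodes indexed by $k\geq 0$, and for each such $k$ the temporal eigenvalues $\rho$ solve $\rho^2+a(D,k^2)\rho+b(D,k^2)=0$ with $a,b$ as in \eqref{coee}. Local asymptotic stability is equivalent to all these roots having negative real part over the (discrete) admissible set of $k$; it therefore suffices to establish the stronger, continuous statement that both roots are stable for every $\eta:=k^2\geq 0$. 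For a quadratic this is governed by the Routh--Hurwitz criterion: writing $\rho_1+\rho_2=-a$ and $\rho_1\rho_2=b$ (Vieta), both roots have negative real part if and only if $a(D,\eta)>0$ and $b(D,\eta)>0$. Thus the whole lemma reduces to proving these two positivity conditions for all $\eta\geq 0$.

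For $\eta>0$ both coefficients are affine in $D$ with strictly positive slopes, namely slope $\eta$ for $a$ and slope $d(v_*)\eta^2$ for $b$, so each inequality solves into a single lower bound on $D$. Solving $a(D,\eta)=(d(v_*)+D)\eta-\beta_1>0$ gives $D>\beta_1/\eta-d(v_*)$, and comparing with \eqref{beta} identifies the right-hand side as exactly $D_{\mathcal{H}}(\eta)$ in \eqref{DH}. Similarly, solving $b(D,\eta)=d(v_*)D\eta^2-\beta_2\eta+\beta_3>0$ gives $D>(\beta_2\eta-\beta_3)/(d(v_*)\eta^2)$, which after inserting the expressions \eqref{beta} for $\beta_2,\beta_3$ is precisely $D_{\mathcal{S}}(\eta)$ in \eqref{DS}. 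Consequently $a(D,\eta)>0$ and $b(D,\eta)>0$ hold simultaneously whenever $D>\max\{D_{\mathcal{H}}(\eta),D_{\mathcal{S}}(\eta)\}$, and the uniform hypothesis \eqref{ss-1} delivers this for every $\eta>0$ at once. I expect this block to be purely mechanical: the only real content is matching the solved thresholds against the closed forms \eqref{DH}--\eqref{DS}.

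The genuinely delicate step is the spatially homogeneous mode $\eta=0$, at which \eqref{DH} and \eqref{DS} are singular and must be treated by hand. Here $b(D,0)=\beta_3=\lambda\theta\mu(K-v_*)/[K(\lambda+v_*)]>0$ holds automatically from $K>v_*$ (guaranteed whenever the coexistence state exists), while $a(D,0)=-\beta_1$ is independent of $D$ and coincides with the purely temporal (ODE) stability threshold discussed before the lemma. This is where the main obstacle lies: no choice of $D$ can stabilize the $\eta=0$ mode if $\beta_1>0$. I would dispose of it by observing that $D_{\mathcal{H}}(\eta)=\beta_1/\eta-d(v_*)\to+\infty$ as $\eta\to 0^+$ when $\beta_1>0$, so the supremum in \eqref{ss-1} is finite only in the regime $\beta_1\leq 0$, i.e. $v_*\geq\frac{K-\lambda}{2}$; in that regime $a(D,0)=-\beta_1\geq 0$, with strict positivity exactly when $v_*>\frac{K-\lambda}{2}$, matching the ODE analysis. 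Collecting the two cases, \eqref{ss-1} yields $a(D,\eta)>0$ and $b(D,\eta)>0$ for all $\eta\geq 0$, hence every temporal eigenvalue has negative real part and $(u_*,v_*)$ is locally asymptotically stable, completing the argument.
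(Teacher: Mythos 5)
Your argument is correct and is essentially the paper's own (implicit) one: the paper states this lemma without a separate proof, as an immediate consequence of the Routh--Hurwitz criterion applied mode-by-mode to \eqref{LP}, with $D_{\mathcal{H}}(\eta)$ and $D_{\mathcal{S}}(\eta)$ in \eqref{DH}--\eqref{DS} being exactly the thresholds obtained by solving $a(D,\eta)>0$ and $b(D,\eta)>0$ for $D$, which is precisely your computation. Your handling of the homogeneous mode $\eta=0$ is in fact more careful than the surrounding text (which is silent on it); the only residual blemish---inherited from the lemma's statement rather than introduced by you---is the borderline case $v_*=(K-\lambda)/2$, where \eqref{ss-1} is still satisfiable (both $D_{\mathcal{H}}$ and $D_{\mathcal{S}}$ are then negative for all $\eta>0$) yet $a(D,0)=0$ gives purely imaginary eigenvalues for the $\eta=0$ mode, so linearization alone yields only neutral, not asymptotic, stability there, and your closing claim of strict positivity of $a$ for all $\eta\geq 0$ should exclude that case.
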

\begin{remark}\em{
In one dimension, say $\Omega=[0,\ell]$,  $\eta=k^2=(\frac{n\pi}{\ell})^2$ where $n=0,1,2,\cdots$,  and $\ell$ is the length of the domain. For fixed $D>0$, the condition \eqref{ss-1} can be satisfied if $\ell$ is small enough.  Hence, no pattern formation will develop  if $\ell$ is sufficient small.}
\end{remark}

Next we explore the possible (local) bifurcation by treating $D$ as a bifurcation parameter. Under (\ref{star}), we see that the sign of $a(D, k^2)$ and $b(D, k^2)$ could be generic and different type bifurcation may arise. In particular the discriminant of \eqref{LP} $\Delta=|a(D,k^2)|^2-4b(D,k^2)$ has not determined sign. Therefore there are two possibilities: $\Delta<0$ and $\Delta\geq 0$, where the former will lead to a Hopf bifurcation (periodic patterns) and the latter may lead to a steady state bifurcation (aggregation patterns). Note the allowable wave numbers $k$ are discrete in a bounded domain, for instance if $\Omega=(0,\ell)$ then $k=\frac{n\pi}{\ell}$ ($n=0,1,2,\cdots$.  It can be easily verified that
$$\Delta=|a(D,k^2)|^2-4b(D,k^2)=(D-d(v_*))^2k^4-2[(D+d(v_*))\beta_1-2\beta_2]k^2+\beta_1^2-4\beta_3.$$
Hence the Hopf bifurcation may occur (i.e. $\Delta<0$) if $[(D+d(v_*))\beta_1-2\beta_2]^2-(\beta_1^2-4\beta_3)(D-d(v_*))^2>0$ and there is allowable wave number $k$ such that
$$k_1^-<k^2<k_1^+$$
where $k_1^{\pm}=\frac{(D+d(v_*))\beta_1-2\beta_2\pm 2\sqrt{[(D+d(v_*))\beta_1-2\beta_2]^2-(\beta_1^2-4\beta_3)(D-d(v_*))^2}}{(D-d(v_*))^2}$ if $D\ne d(v_*)$.

If $\Delta>0$, steady state bifurcation may occur but the possibility can be generic. Indeed one can readily verify that the steady state bifurcation will occur if there is allowable wave number $k$ such that one of the following cases holds:
\begin{eqnarray}\label{SSB}
\begin{aligned}
&(i)\ b(D, k^2)<0;\\
&(ii)\ b(D, k^2)=0, {\color{black}a(D, k^2)<0};\\
&(iii)\ b(D, k^2)>0, a(D, k^2)<0 \ \text{and}\ |a(D,k^2)|^2-4b(D,k^2)>0;
\end{aligned}
\end{eqnarray}
The corresponding parameter regime guaranteeing each of {\color{black}$(i), (ii)$ and  $(iii)$} can be found with easy calculations. For example, under (\ref{star}), it follows that $\beta_3>0$. Hence condition $(i)$ is ensured if
\begin{equation}\label{ss-3}
\beta_2>0, \ \ \text{namely} \ \ {\color{black}\frac{K-\lambda-2v_*}{(\lambda+v_*)(K-v_*)}> \frac{\chi(v_*)}{d(v_*)}}
\end{equation}
and the allowable wave number $k$ satisfy
\begin{equation}\label{ss-4}
k_2^-<k^2<k_2^+,\ k_2^{\pm}=\frac{\beta_2\pm\sqrt{\Lambda}}{2Dd(v_*)} \ \ \text{with} \ \Lambda=\beta_2^2-4\beta_3 D d(v_*).
\end{equation}
Conditions of ensuring $(ii)$ or $(iii)$ can be derived similarly and will not be detailed here since  these conditions can be easily inspected when the parameter values and the motility function $d(v), \chi(v)$ are specified, as shown in the next subsection.

\begin{figure}[t]
\centering
\includegraphics[width=6cm]{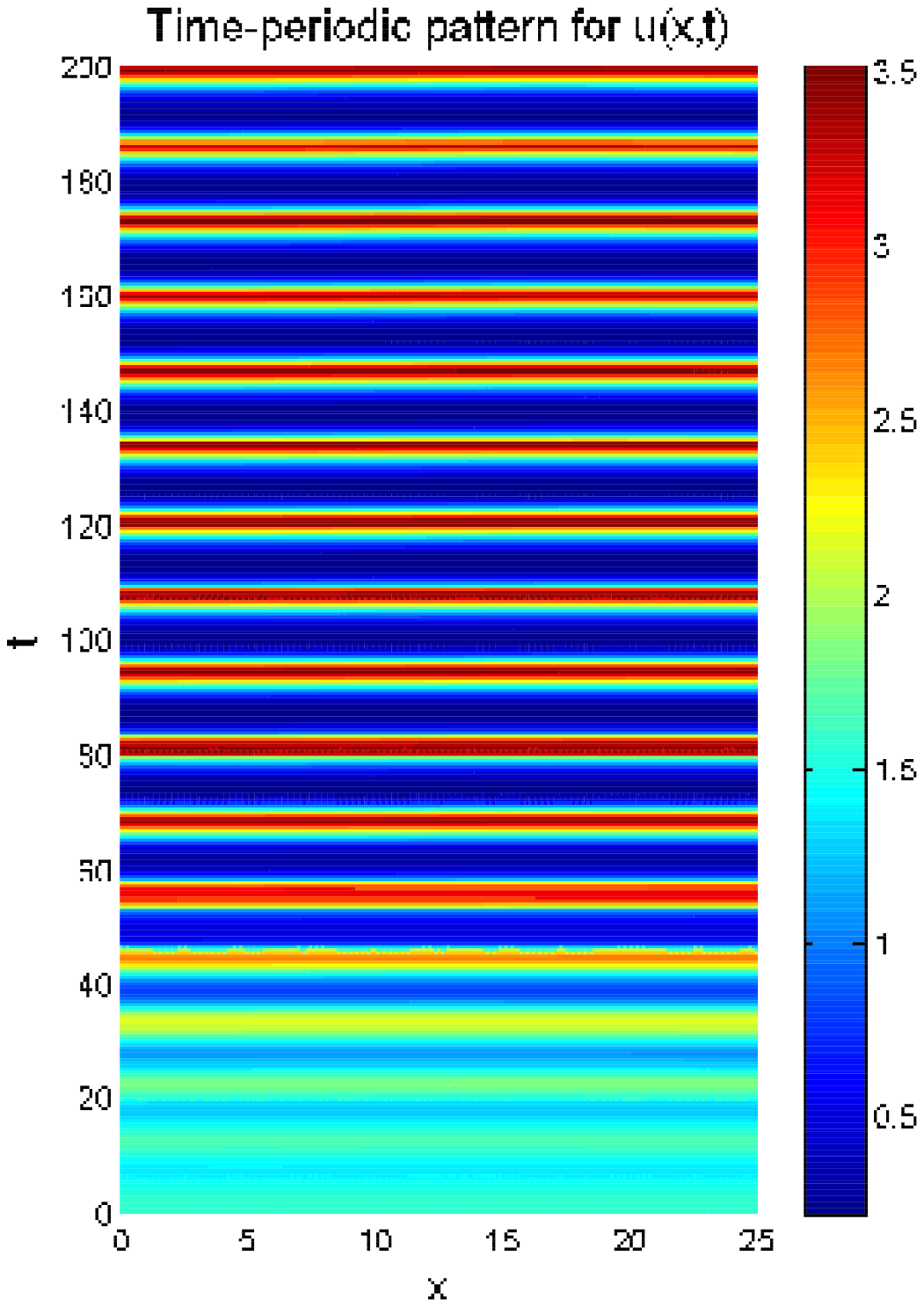}
\includegraphics[width=6cm]{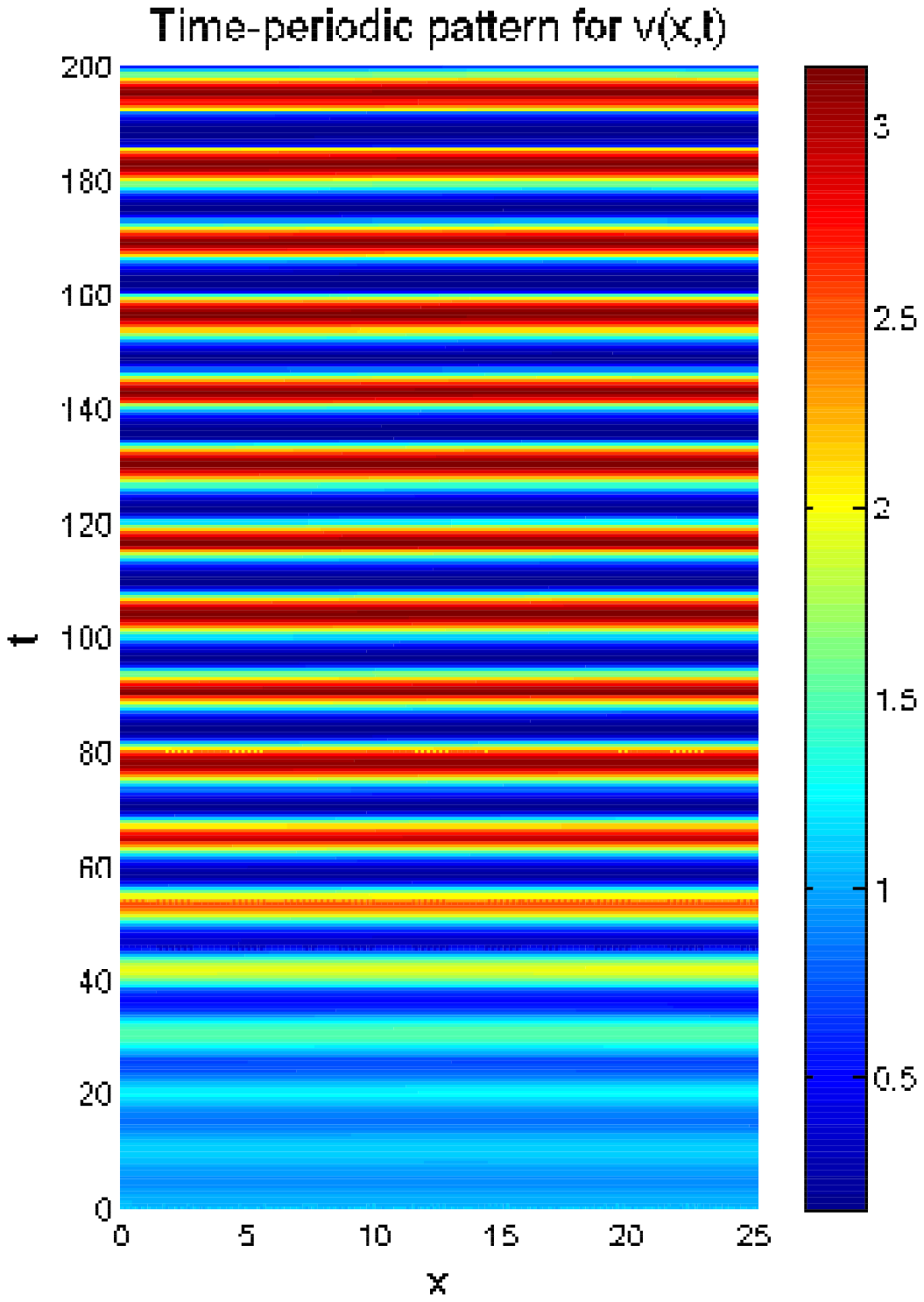}

(a) \hspace{5.5cm} (b)

\vspace{0.4cm}

\includegraphics[width=12cm]{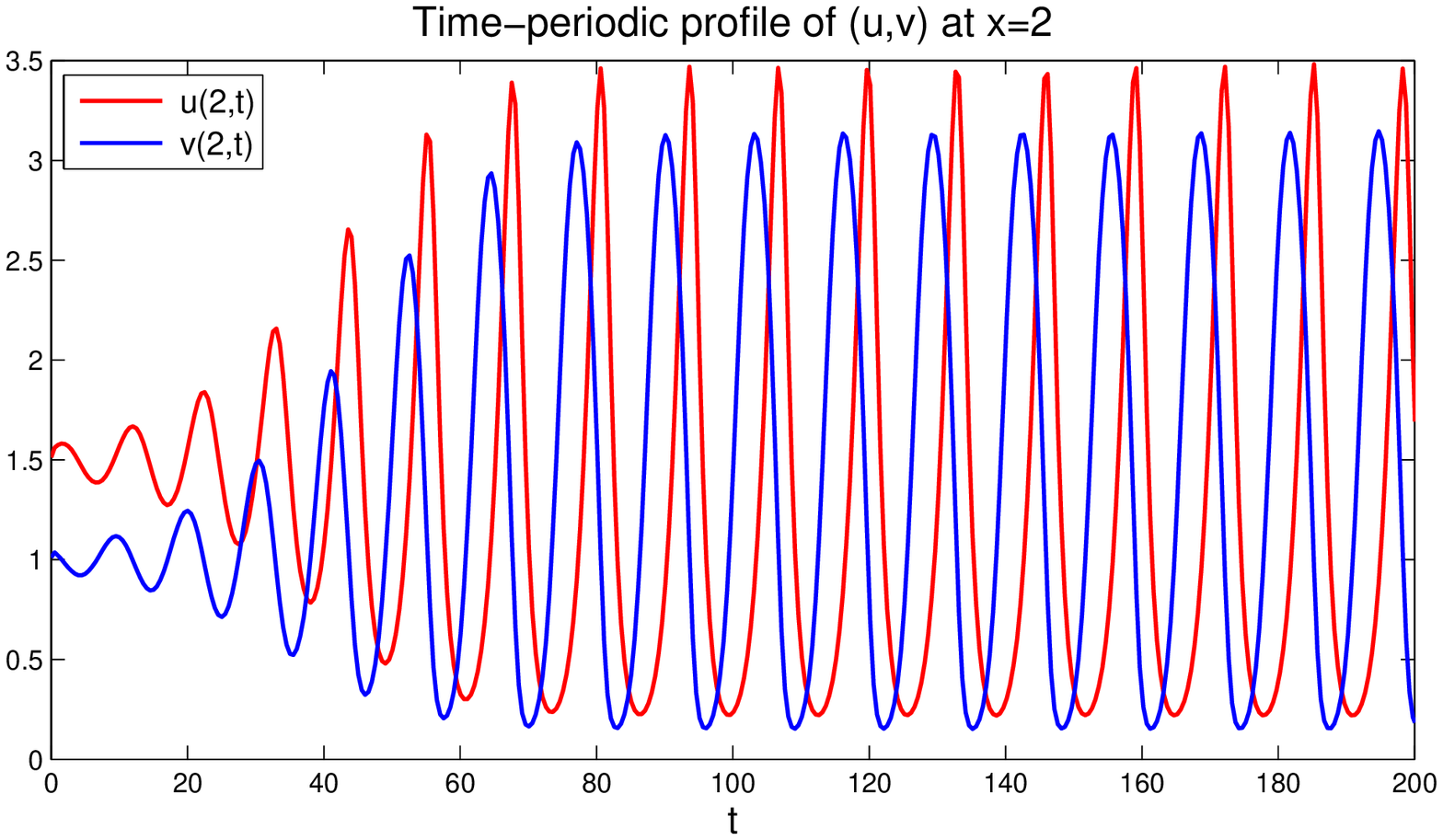}

(c)
\caption{Numerical simulation of spatially homogeneous time-periodic patterns generated by (\ref{LV-2}) with $\chi(v)=-d'(v)$ in the interval $[0, 8 \pi]$, where $d(v)=d_1(v)$ given in \eqref{cd} and parameter values are: $K=4,\gamma=2,\theta=1,\lambda=1,\mu=1, D=1/10$. The initial datum $(u_0, v_0)$ is set as a small random perturbation of the {\color{black} homogeneous} coexistence steady state $(3/2,1)$. The simulation illustrates a spatially homogeneous time-periodic coexistence patterns for the predator and the prey.}
\label{fig1}

\end{figure}

\subsection{Spatio-temporal patterns}
In this subsection, we shall present some examples to illustrate the periodic and steady state patterns. As discussed in the previous section, the Lotka-Volterra type predator-prey system (\ref{LV}) does not generate any spatially inhomogeneous patterns while the Rosenzweig-MacArthur type predator-prey system (\ref{LV-2}) with $\chi(v)=-d'(v)$ may generate periodic or steady state patterns in appropriate parameter regimes. Therefore we only consider the  Rosenzweig-MacArthur type predator-prey system (\ref{LV-2}) with $\chi(v)=-d'(v)$. We fix the value of the parameters in all simulations as follows:
\begin{equation}\label{cp}
K=4,\gamma=2,\theta=1,\lambda=1,\mu=1.
\end{equation}
Then it can be checked from \eqref{coess} that the coexistence steady state $(u_*,v_*)=(3/2,1)$. Furthermore it can be verified from \eqref{beta} that
\begin{equation*}\label{para-2}
\beta_1=\frac{1}{8}, \ \beta_3=\frac{3}{8}
\end{equation*}
where the value of $\beta_2$ depend on the specific form of $d(v)$. In this paper, we shall test three motility function $d(v)$ as follows
\begin{equation}\label{cd}
d_1(v)=\frac{1}{1+e^{2(v-1)}}, \ \ \ d_2(v)=\frac{1}{1+e^{\frac{1}{10}(v-1)}}, \ \ \ d_3(v)= \frac{1}{9+e^{2(v-1)}}
\end{equation}
Hence $d_1(v_*)=d_2(v_*)=1/2$, $d_3(v_*)=\frac{1}{10}$ and $\chi_1(v_*)=-d_1'(v_*)=\frac{1}{2}, \chi_2(v_*)=-d_2'(v_*)=\frac{1}{40}$,  $\chi_3(v_*)=-d_3'(v_*)=\frac{1}{50}$.
Next we shall numerically explore the possible patterns for different choices of $d(v)$ given in \eqref{cd}.

\begin{figure}[t]
\centering
\includegraphics[width=7cm]{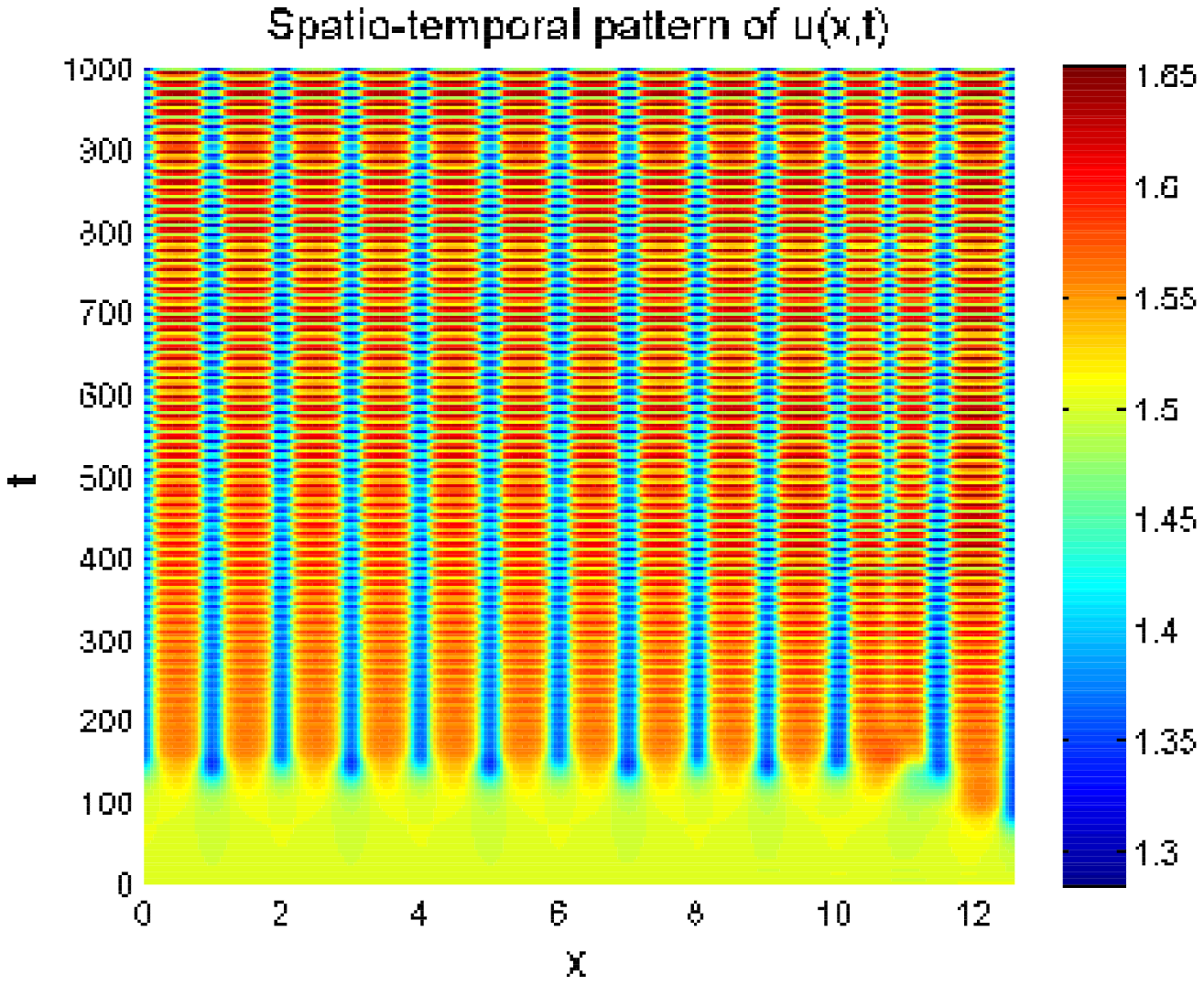}
\includegraphics[width=7cm]{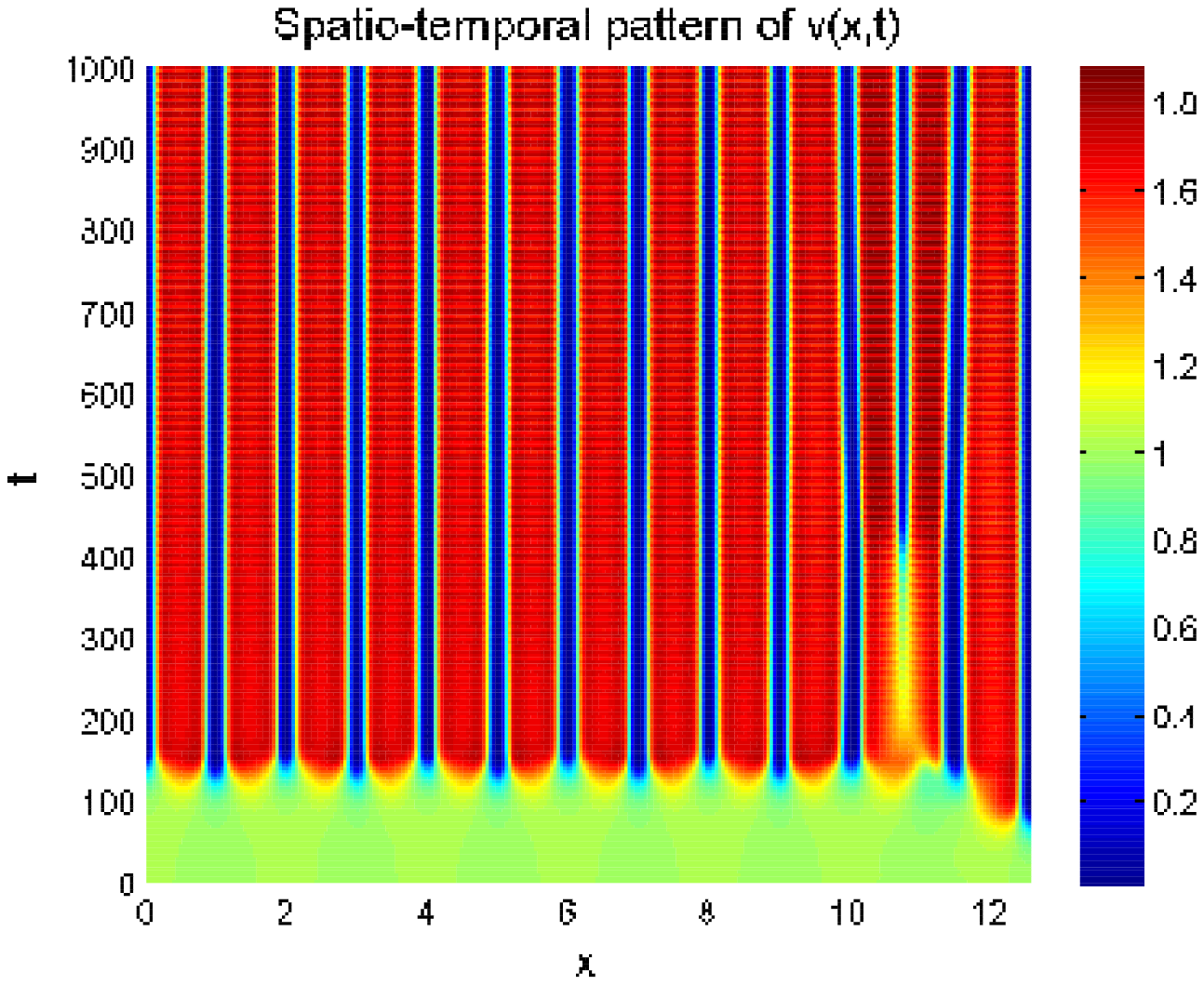}

(a) \hspace{6.5cm} (b)

\vspace{0.5cm}

\includegraphics[width=7cm]{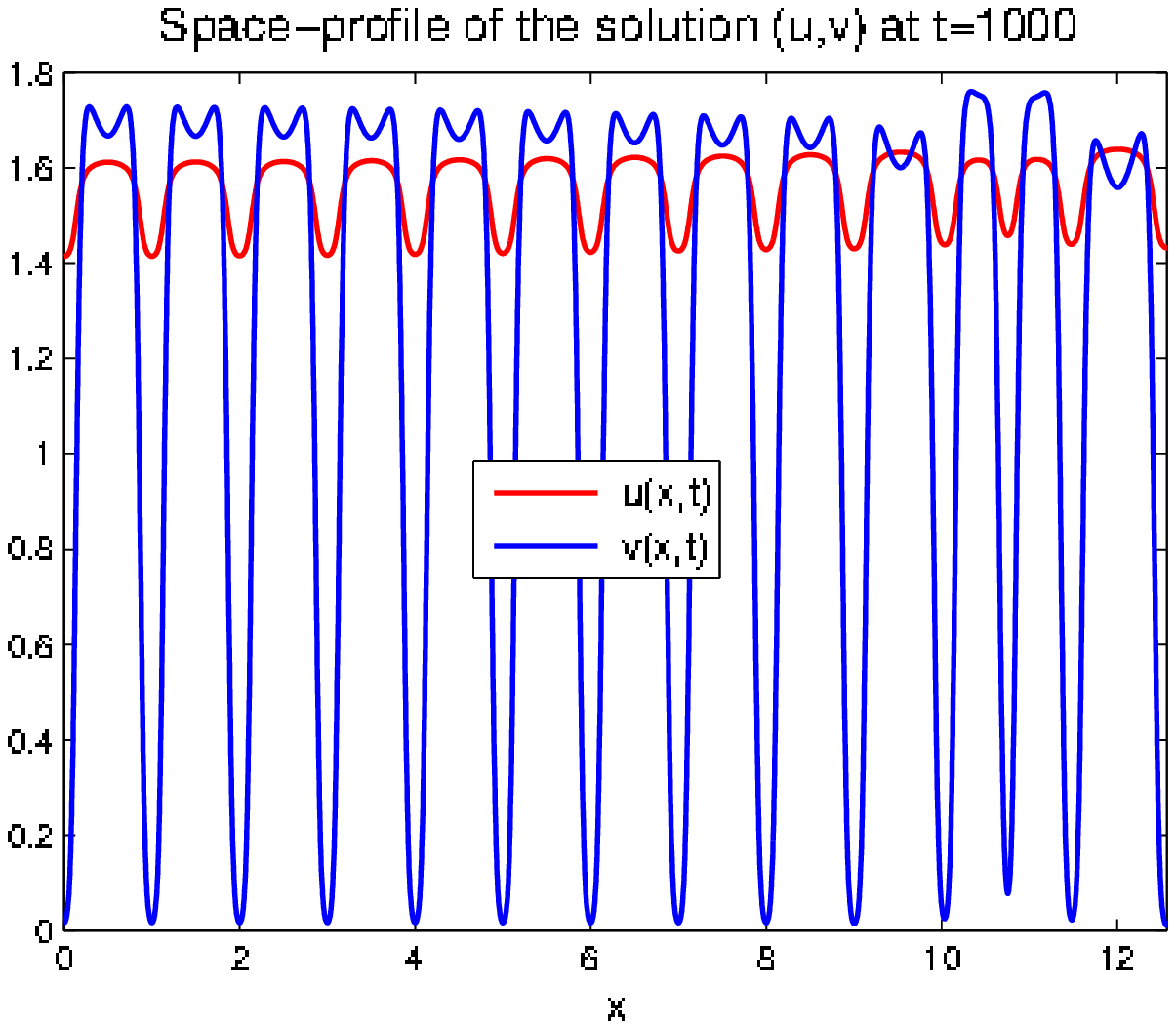}
\includegraphics[width=7cm]{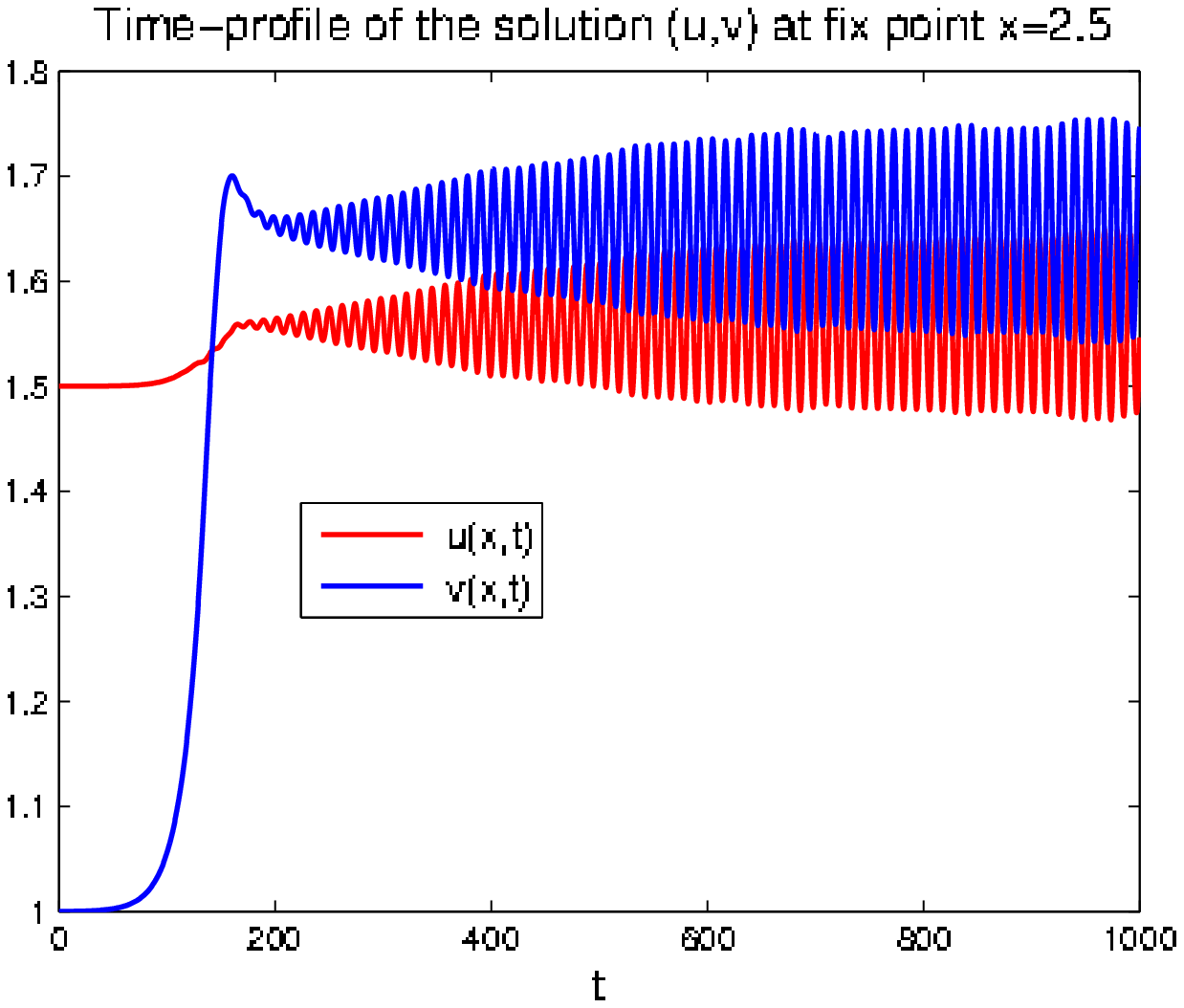}

(c) \hspace{6.5cm} (d)
\caption{Numerical simulation of spatio-temporal patterns generated by (\ref{LV-2}) with $\chi(v)=-d'(v)$ in the interval $[0, 4 \pi]$, where $d(v)=d_2(v)$ given in \eqref{cd} and parameter values are: $K=4,\gamma=2,\theta=1,\lambda=1,\mu=1, D=1/4800$. The initial datum $(u_0, v_0)$ is set as a small random perturbation of the homogeneous coexistence steady state $(3/2,1)$. }
\label{fig2}
\end{figure}

{\bf Case 1}: $d(v)=d_1(v)$. In this case, under the parameters chosen  in  \eqref{cp}, one can verify that
\begin{equation}\label{hbc}
a(D,k^2)=\left(\frac{1}{2}+D\right)k^2-\frac{1}{8}.
\end{equation}
One also can verify from (\ref{coee}) that $b(D,k^2)=\frac{D}{2}k^4+\frac{5}{16}k^2+\frac{3}{8}>0$, $\beta_2=-\frac{5}{16}$ and
$$\Delta=|a(D,k^2)|^2-4b(D,k^2)=\Big(\frac{1}{2}-D\Big)^2k^4-\frac{1}{4}\Big(\frac{11}{2}+D\Big)k^2-\frac{95}{64}.$$
This indicates that as long as $D$ is close to $1/2$, then $\Delta<0$ and Hopf bifurcation will certainly arise. One is concerned whether the steady state bifurcation will occur in this case. Indeed it can be readily checked $a(D, k^2)<0 \ \text{and}\ |a(D,k^2)|^2-4b(D,k^2)>0$ can not be fulfilled simultaneously. Hence from (\ref{SSB}), we know that the steady state bifurcation is impossible in this case. However the Hopf bifurcation will develop if $D$ is suitably chosen so that $\Delta<0$ for some $k$. For simulation, we choose $D=1/10$ such that $\Delta<0$ and $a(D,k^2)<0$ with allowable wavenumber satisfying
$
k^2<\frac{5}{24}
$
which, under the facts $k^2=(\frac{n\pi}{\ell})^2, n=0,1,2, \cdots$ with $\ell=8\pi$, gives $n=0,1,2,3$. The numerical simulations of patterns are then shown in Fig.\ref{fig1}(a)-(b) where we observe the spatially homogeneous time-periodic patterns. In principle there will be three spatial modes arising from the homogeneous coexistence steady state $(3/2,1)$, but we do not obverse the spatial inhomogeneity. This implies from the plot in Fig.\ref{fig1}(c) that as the solution amplitude become large as time increases, the nonlinearity will play a dominant role and the linearized dynamics is insufficient to explain the nonlinear behavior.

{\bf Case 2}: $d(v)=d_2(v)$. For this case, $\beta_2=\frac{7}{160}>0$ and $a(D,k^2)$ is still given by (\ref{hbc}). Hence the condition (\ref{ss-3}) is verified {\color{black} and
\begin{eqnarray*}
\Lambda=\beta_2^2-4\beta_3 D d(v_*)=\left(\frac{7}{160}\right)^2-\frac{3}{4}D.
\end{eqnarray*}
Clearly $\Lambda>0$ if $D<\frac{49}{19200}$ and $\Lambda \leq 0$ if $D\geq\frac{49}{19200}$}, which indicates from (\ref{ss-4}) that the steady state bifurcation will occur if $0<D<\frac{49}{19200}$. This is confirmed by numerical simulations shown in Fig. \ref{fig2} where we take $D=\frac{1}{4800}$ and observe the development of spatially inhomogeneous stationary patterns (see Fig. \ref{fig2} (a)-(b)). Furthermore  both the predator and the prey reach a perfect inhomogeneous coexistence state in space (see Fig.\ref{fig2}(c)) but remain oscillations in time (see Fig.\ref{fig2}(d)). It has been proved that if $d(v)$ is constant, the diffusive Rosenzweig-MacArthur predator-prey system \eqref{LV-2} will not admit spatial patterns (cf.  \cite{WWS-M3AS-2018, YWS}).  {\color{black} The spatially inhomogeneous stationary patterns shown in Fig.\ref{fig2} implies that density-dependent nonlinear motility (i.e., function $d(v)$), which leads to a cross-diffusion motion, is a trigger for pattern formation. This is a new observation although it is not justified in the paper.  When $d(v)$ is constant, the spatial patterns and time-periodic patterns have been obtained for preytaxis systems with different predator-prey interactions or mobility coefficient $\chi(v)$, see \cite{WWY-DCDS-A, WSS-JNS-2017}. We also refer to  \cite{MK-JMB-1980, JKT-EJAM-2017} for some other types of cross-diffusion which cause the emergence of spatial patterns.}
\begin{figure}[t]
\centering
\includegraphics[width=7cm]{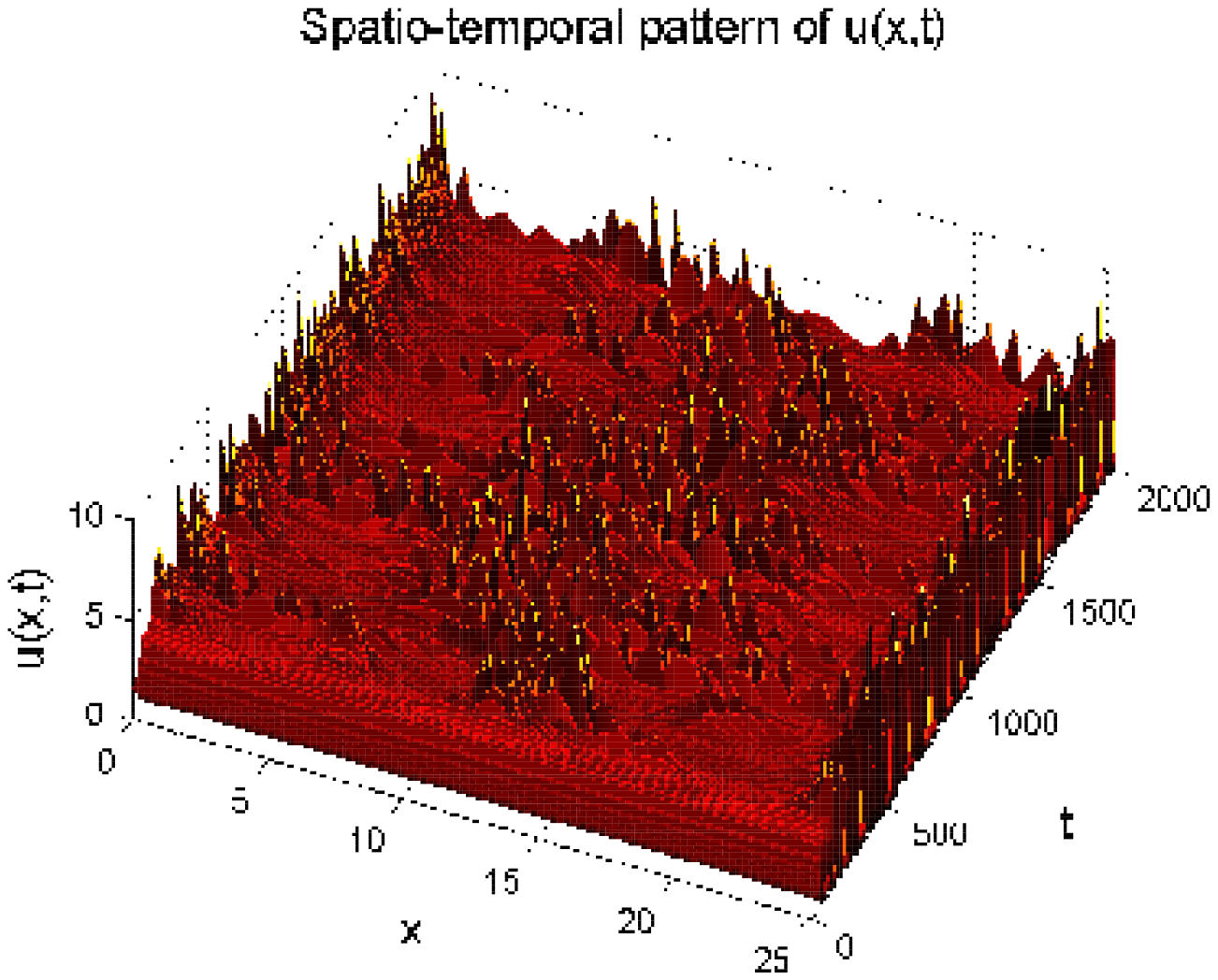}
\includegraphics[width=7cm]{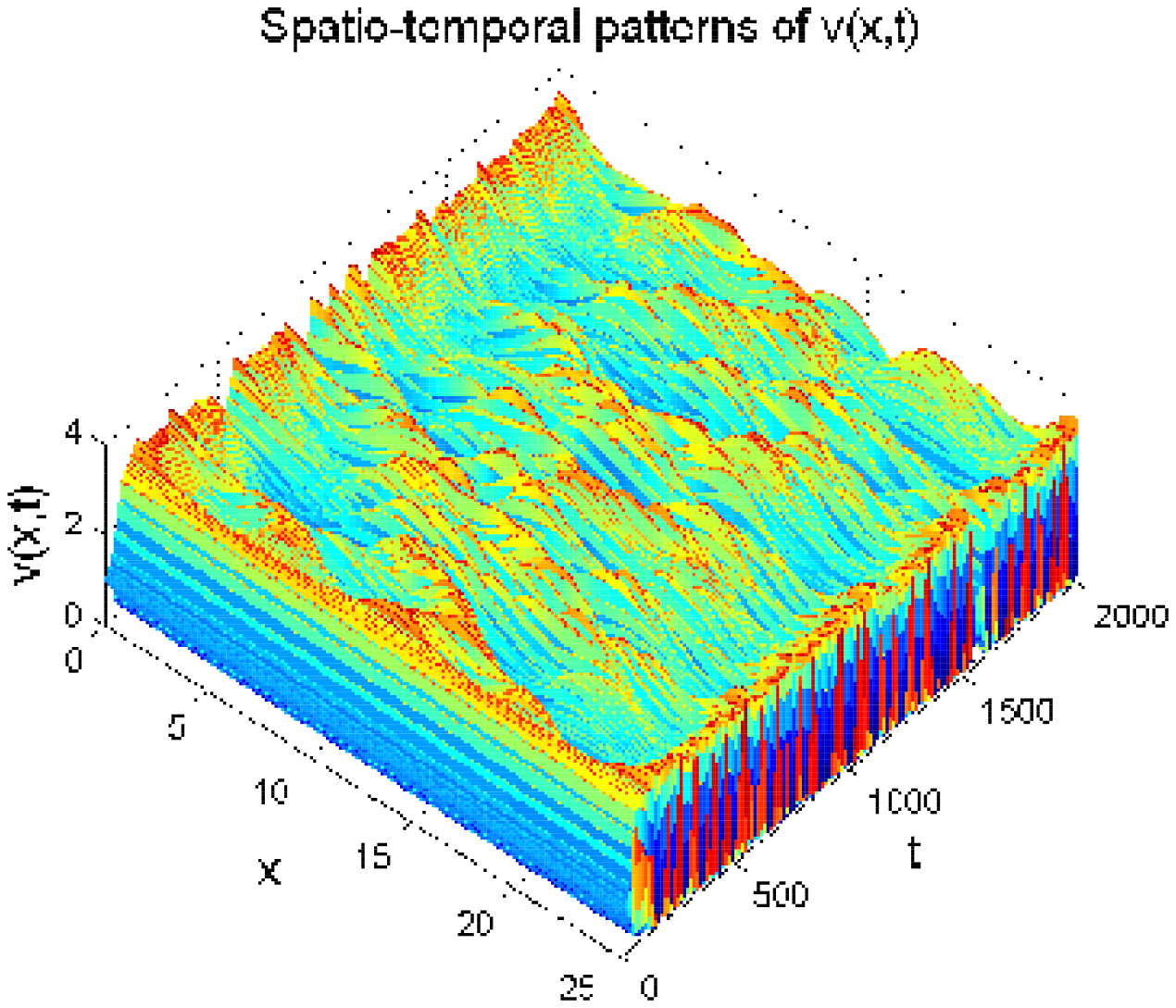}
%
%
\includegraphics[width=15cm]{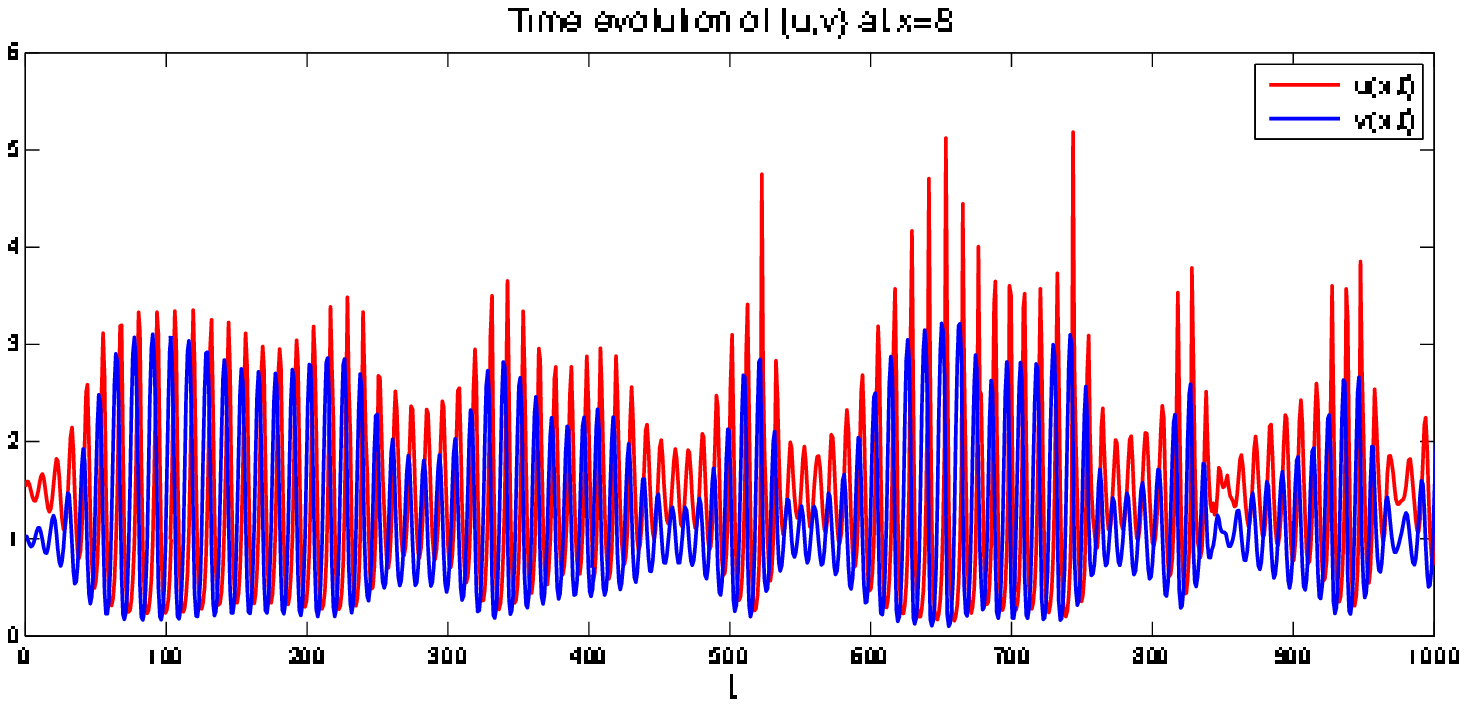}

\caption{Numerical simulation of spatio-temporal patterns generated by (\ref{LV-2}) with $\chi(v)=-d'(v)$ in the interval $[0, 8 \pi]$, where $d(v)=d_3(v)$ given in \eqref{cd} and parameter values are: $K=4,\gamma=2,\theta=1,\lambda=1,\mu=1, D=1/10$. The initial datum $(u_0, v_0)$ is set as a small random perturbation of the coexistence steady state $(3/2,1)$. }
\label{fig3}
\end{figure}

{\bf Case 3}: $d(v)=d_3(v)$. In this case, one has $d(v_*)=\frac{1}{10}$ and $\chi(v_*)=\frac{1}{50}$. Furthermore
\begin{equation*}
a(D,k^2)=(\frac{1}{10}+D)k^2-\frac{1}{8}, \ b(D,k^2)=\frac{D}{10}k^4+\frac{1}{400}k^2+\frac{3}{8}
\end{equation*}
and hence
\begin{equation*}
\Delta=|a(D,k^2)|^2-4b(D,k^2)=\left(\frac{1}{10}-D\right)^2k^4-\frac{1}{4}\left(\frac{7}{50}+D\right)k^2-\frac{95}{64}.
\end{equation*}
Choosing $D=\frac{1}{10}$, then $\Delta<0$  and $a(D,k^2)<0$ with allowable wavenumber  $k^2<\frac{5}{8}$.
Hence allowable wave modes are {\color{black}$n=0,1,2,3,4,5,6$} by noticing that $k=\frac{n}{8}$ and Hopf bifurcation (with positive real part in the temporal eigenvalue) will arise. We show the numerical simulation in Fig.\ref{fig3}, where we observe the development of chaotic spatio-temporal patterns, which are different from the patterns shown in Fig.\ref{fig1} and Fig.\ref{fig2}. They are not the periodic patterns either (see the lower panel of Fig.\ref{fig3}) as we expect from the linear stability analysis, which indicates again that the dynamics between nonlinear and linearized systems are quite different.
We also note that the simulations in Fig.\ref{fig1} and Fig.\ref{fig3} demonstrate that the Hopf bifurcation arising from the time-periodic orbits can develop into spatially homogeneous time-periodic patterns (Fig.\ref{fig1}) or chaotic spatio-temporal patterns (Fig.\ref{fig3}). The difference in the simulations shown in Fig.\ref{fig1} and Fig.\ref{fig3} lies in the choice of motility function $d(v)$. This observation hints us that the motility function $d(v)$ of the predator plays an important role in determining the spatial distribution of the predator and the prey. In particular the random motion ($d(v)$ is constant) and nonrandom motion ($d(v)$ is non-constant) will result in different patterns (i.e. spatial distribution of the predator and the prey). Hence how does the motility function $d(v)$ affects the dynamics of nonlinear predator-prey systems launches an interesting question for the future.

\bigbreak

\noindent \textbf{Acknowledgment.} {\color{black}The authors are grateful to the three referees and  one editor for their valuable comments, which greatly improved the exposition of our paper.}
 The research of H.Y. Jin was supported  by  the NSF of China No. 11871226 and  the Fundamental Research Funds for the Central Universities. The research of Z. Wang was supported by the Hong Kong RGC GRF grant No. PolyU 153298/16P.

\end{document}